\newtheorem{theorem}{Theorem}[section]
\newtheorem{lemma}[theorem]{Lemma}
\newtheorem{corollary}[theorem]{Corollary}
\newtheorem{proposition}[theorem]{Proposition}
\theoremstyle{definition}
\newtheorem{definition}[theorem]{Definition}
\newtheorem{example}[theorem]{Example}
\newtheorem{remark}[theorem]{Remark}
\numberwithin{equation}{section}
\numberwithin{figure}{section}
\newtheorem*{ack}{Acknowledgments}
\numberwithin{equation}{section}
\newcommand{\C}{\mathbb{C}}
\newcommand{\N}{\mathbb{N}}
\newcommand{\Z}{\mathbb{Z}}
\newcommand{\intfib}[1]{\int_{\Delta^{#1}}}
\newcommand{\Cech}{{\v{C}ech} }  
\newcommand{\Tot}{{\bf Tot}}
\newcommand{\tot}{{\bf tot}}
\newcommand{\Chern}{\mathrm{Ch}}
\newcommand{\uu}{[u]}
\newcommand{\uul}{[u]^{\bu\leq 0}}  \newcommand{\ul}{[u]^{\bu\leq 0}}
\newcommand{\Ab}{{\mc{A}b}}
\newcommand{\sAb}{\mc{A}b^{\Del^{op}}}
\newcommand{\sSet}{\mc{S}et^{\Del^{op}}}
\newcommand{\Chain}{\mc {C}h}
\newcommand{\Set}{\mc{S}et}
\newcommand{\Cat}{\mc{C}at}
\newcommand{\OM}{{\bf \Omega}}
\newcommand{\om}{\omega}
\newcommand{\tr}{\mathrm{tr}}
\newcommand{\HVB}{{\bf{HVB}}}
\newcommand{\lsSet}{\lSet^{\Del^{op}}}
\newcommand{\lSet}{{\mc{S}et_{\bf l}}}
\newcommand{\lCat}{{\mc{C}at_{\bf l}}}
\newcommand{\CMan}{\mathbb{C}\mc{M}an}
\newcommand{\Ob}{{\bf Obj}}
\newcommand{\Mor}{{\bf Mor}}
\newcommand{\Cov}{{\mathcal C}ov}
\newcommand{\image}{\mathrm{Im}}
\newcommand{\sgn}{\mathrm{sgn}}
\newcommand{\Del}{{\bf \Delta}}
\newcommand{\Open}{{Open}}
 \newcommand{\Ohol}{\Om_{hol}}
\newcommand{\Nerve}{{\mathcal N}}
\newcommand{\Norm}{N}
\newcommand{\DK}{DK}
\newcommand{\DKSet}{\underline{DK}}
\newcommand{\HVBnabla}{{\mathcal{HVB}^\nabla}}
\newcommand{\mc}{\mathcal }
\newcommand{\Om}{\Omega}
\newcommand{\CS}{{\bf{Ch}}}
\newcommand{\bu}{\bullet}
\newcommand{\ot}{\otimes}
\newcommand{\al}{\alpha}
\newcommand{\stp}{\mathfrak s}
\newcommand{\trunc}{T}
\newcommand{\quot}{Q}
\newcommand{\tabspac}{0.8cm}
\newcommand{\CN}{\mathrm{\check{N}}}   
\newcommand{\vC}{{\check{C}}}   
\newcommand{\Delt}[1]{\Delta^{#1}}
\newcommand{\F}{{\mathbf F}}
\author[C. Glass]{Cheyne Glass}
\address{Cheyne Glass, St.~Joseph's College Long Island, Department of Mathematics, 155 W. Roe Blvd., Patchogue, NY 11772}
  \email{cmiller5@sjcny.edu}
\author[M. Miller]{Micah Miller}
\address{Micah Miller, Borough of Manhattan Community College The City University of New York, Department of Mathematics, 199 Chambers Street, New York, NY 10007}
  \email{mmiller@bmcc.cuny.edu}
\author[T. Tradler]{Thomas Tradler}
  \address{Thomas Tradler, New York City College of Technology The City University of New York, Department of Mathematics, 300 Jay Street, Brooklyn, NY 11201}
  \email{ttradler@citytech.cuny.edu}
\author[M. Zeinalian]{Mahmoud Zeinalian}
  \address{Mahmoud Zeinalian, Lehman College, The City University of New York, Department of Mathematics, 250 Bedford Park Blvd W, Bronx, NY 10468}
  \email{mahmoud.zeinalian@lehman.cuny.edu}
\title[The Hodge Chern character of holomorphic connections...]{The Hodge Chern character of holomorphic connections as a map of simplicial presheaves}
\keywords{Simplicial presheaf, Chern character, Chern-Simons, cosimplicial simplicial set, totalization, \Cech complex}
\begin{document}
\maketitle
\begin{abstract} We define a map of simplicial presheaves, the Chern character, that assigns to every sequence of composable non connection preserving isomorphisms of vector bundles with holomorphic connections an appropriate sequence of holomorphic forms. We apply this Chern character map to the \Cech nerve of a good cover of a complex manifold and assemble the data by passing to the totalization to obtain a map of simplicial sets. In simplicial degree $0$, this map gives a formula for the Chern character of a bundle in terms of the clutching functions. In simplicial degree $1$, this map gives a formula for the Chern character of bundle maps.  In each simplicial degree beyond $1$, these invariants, defined in terms of the transition functions, govern the compatibilities between the invariants assigned in previous simplicial degrees. In addition to this, we also apply this Chern character to complex Lie groupoids to obtain invariants of bundles on them in terms of the simplicial data. For group actions, these invariants land in suitable complexes calculating various Hodge equivariant cohomologies. In contrast, the de Rham Chern character formula involves additional terms and will appear in a sequel paper. In a sense, these constructions build on a point of view of ``characteristic classes in terms of transition functions" advocated by Raoul Bott \cite{BB, B1, B2, S}, which has been addressed over the years in various forms and degrees (e.g. \cite{S, BSS, KT, TT1, OTT1}), concerning the existence of formulae for the Hodge and de Rham characteristic classes of bundles solely in terms of their clutching functions. 
\end{abstract}
\tableofcontents

\section{Introduction}\label{SEC:Intro}

Let $\HVB(U)=\Nerve ( \HVBnabla(U))$ be the simplicial presheaf that assigns to a complex manifold $U$ the nerve of the category whose objects are holomorphic vector bundles endowed with holomorphic connections, and whose morphisms are holomorphic bundle isomorphisms that ignore the connections. Let $\Ohol^\bu(U)[u]^{\bullet \leq 0}$ be the non-positively graded complex obtained by first tensoring holomorphic differential forms $(\Ohol^\bu(U), d= 0)$, with the polynomial ring in $u$ of degree $-2$, and then quotienting out by all elements in positive degrees.  Then, $\OM(U)=\DKSet(\Ohol^\bu(U)[u]^{\bu \leq 0})$ is the underlying simplicial set of the Dold-Kan functor applied to $\Ohol^\bu(U)[u]^{\bu \leq 0}$. Simply, this is the simplicial set whose $k$-simplices are decorations of all $i$-dimensional faces of the standard $k$-simplex with sequences of forms, all even for $i$ even, and all odd for $i$ odd, in such a way that the alternating sum of all forms sitting on the $(i-1)$-dimensional faces of any $i$-dimensional face add up to $0$. The assignment $U\mapsto \Ohol^\bu(U)[u]^{\bu \leq 0}$ defines a simplicial presheaf $\OM$.

We construct a map of simplicial presheaves $\CS: \HVB \to \OM$, as follows. In simplicial degree $0$, we assign to a holomorphic bundle and a holomorphic connection $(E, \nabla)$ the decoration of the standard $0$-simplex by the sum $dim(E)+0\cdot u +0\cdot u^2+\dots$, where $dim(E)$ is the dimension of the fiber of $E$ and the $j^{th}$ zero denotes the zero $2j$-form.

In simplicial degree $1$, we assign to a bundle isomorphism $g: (E_{0}, \nabla_0) \to (E_{1}, \nabla_1)$ that ignores the holomorphic connections $\nabla_0$ and $\nabla_1$, the decoration of the standard $1$-simplex obtained by the trace of the bundle endomorphism valued Maurer-Cartan form $(g^{-1}dg)u$. Here, $dg$ represents the derivative of $g$ obtained by pre and post composition with the operators $\nabla_0$ and  $\nabla_1$ on the domain and the range.

In simplicial degree $2$, we assign to a pair of compossible morphisms $(E_{0}, \nabla_0)\to (E_{1}, \nabla_1)\to (E_{2}, \nabla_2)$ the labelling of the $7$ faces of the standard $2$-simplex by formulae, where the $0$-codimension face, which is the body of the triangle, is labeled by the trace of the product of the left and right invariant Maurer-Cartan forms. Similarly, we decorate higher simplices with appropriate forms, all of which are compatibly encompassed in the following statement.

\setcounter{section}{2} \setcounter{theorem}{3} \begin{theorem}
The above map is a map $\CS: \HVB \to \OM$ is a map of simplicial presheaves.
\end{theorem}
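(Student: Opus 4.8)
The statement asserts two things at once, and I would separate them. First, $\CS$ must be a morphism of presheaves, i.e.\ natural in the complex manifold $U$; second, for each fixed $U$ it must be a morphism of simplicial sets, i.e.\ compatible with all face and degeneracy operators. Naturality in $U$ is the routine half: every form attached to a face is built from the connection–twisted derivative $dg=\nabla_1 g-g\nabla_0$, wedge products of holomorphic forms, multiplication by powers of $u$, and the fiberwise trace, each of which commutes with pullback of holomorphic bundles, connections and forms along holomorphic maps (and in particular with restriction to opens). Hence $\CS$ intertwines the restriction maps on both sides, and I would dispatch this with a short remark once the formulas are in hand. The weight of the theorem therefore falls on simplicial naturality.

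For this I would use the combinatorial model of the target recalled in the introduction: since the internal differential of $\Ohol^\bu(U)[u]^{\bu\leq 0}$ is zero, a $k$-simplex of $\OM(U)=\DKSet(\Ohol^\bu(U)[u]^{\bu\leq 0})$ is exactly a decoration $F\mapsto\omega_F$ of the faces $F\subseteq\{0,\dots,k\}$ of $\Delta^k$ by forms of the correct parity such that, for every face $F$, the alternating sum $\sum_j(-1)^j\,\omega_{\partial_jF}$ over its codimension-one subfaces vanishes. Thus $\CS$ is a simplicial map precisely when (a) for every $k$-simplex $\sigma=(g_1,\dots,g_k)$ of the nerve the decoration $\CS(\sigma)$ satisfies these alternating-sum conditions, so that $\CS(\sigma)$ is a legitimate point of $\OM(U)_k$; and (b) $\CS$ commutes with the $d_i$ and $s_i$. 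My plan is to set up the formula so that (b) is essentially formal and to concentrate the real work in (a).

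The structural device is to exploit that the form assigned to a face $F=[j_0<\dots<j_m]$ depends only on the composite isomorphisms $h_a=g_{j_a}\cdots g_{j_{a-1}+1}$ between the consecutive selected vertices, together with the connections there; that is, the decoration is \emph{face-local}. Granting this, compatibility with the face maps is immediate: the target $d_i$ restricts a decoration of $\Delta^k$ to the $i$-th face, while on the source $d_i$ either composes $g_{i+1}g_i$ or drops an end morphism, and face-locality says these produce the same composite data, so $d_i\CS(\sigma)=\CS(d_i\sigma)$. The one nontrivial input is that the twisted derivative obeys the Leibniz rule $d(g'g)=dg'\cdot g+g'\cdot dg$ (a short computation from $dg=\nabla_1g-g\nabla_0$), guaranteeing that the Maurer--Cartan data of a composite is assembled correctly from those of its factors. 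Compatibility with the degeneracies $s_i$ reduces to the observation that an inserted identity $\mathrm{id}\colon(E,\nabla)\to(E,\nabla)$ has $d(\mathrm{id})=\nabla-\nabla=0$, so its Maurer--Cartan contributions vanish and $\CS(s_i\sigma)$ is the degenerate decoration $s_i\CS(\sigma)$.

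This leaves (a), which I expect to be the crux. On a $1$-face the identity reads $\dim E_1-\dim E_0=0$ and holds because $g$ is an isomorphism; on a $2$-face it is $\tr\bigl((g_2g_1)^{-1}d(g_2g_1)\bigr)=\tr(g_2^{-1}dg_2)+\tr(g_1^{-1}dg_1)$, which follows from the Leibniz rule together with cyclicity of the trace. The first genuinely new constraint, relating the body forms of the four triangular faces, appears on the $3$-face, and the general case is the statement that the simplicial boundary of each attached Chern--Simons--type form vanishes; since the holomorphic forms carry no differential, this must be an identity among the polynomial expressions in the curvatures $\nabla_a^2$ and the Maurer--Cartan forms $h_a^{-1}dh_a$ themselves. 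I would prove it by isolating a single master identity — the algebraic shadow of Stokes' theorem for the transgression forms interpolating the connections $\nabla_{j_0},\dots,\nabla_{j_m}$ across $\Delta^m$ — and deriving the alternating-sum vanishing on every face as its corollary, the recurring tools being the Bianchi identity for each $\nabla_a$ and graded cyclicity of the trace. The main obstacle is the uniform bookkeeping of signs and of which curvature and Maurer--Cartan terms survive on each subface; I would manage it by proving the master identity once for the top face and transporting it to all faces via face-locality, rather than verifying the conditions dimension by dimension.
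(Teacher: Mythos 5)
Your overall architecture matches the paper's proof: naturality in $U$ is dispatched by pullback-compatibility of all the ingredients, a $k$-simplex of $\OM(U)$ is a chain map out of $N(\Z\Delt{k})$ (equivalently your face-decoration with vanishing alternating sums, since the internal differential is zero), compatibility with $d_i$ and $s_i$ follows from the face-locality of formula \eqref{EQU:tr(f-1nablaf-nablag)} together with the Leibniz rule and $\nabla(\mathrm{id})=0$, and the real content is the closedness condition (your item (a)). Your verifications in face dimensions $1$ and $2$ are exactly the paper's.

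The gap is in how you propose to handle the general face. You defer it to a ``master identity'' phrased as the algebraic shadow of Stokes' theorem for transgression forms interpolating the connections across $\Delta^m$, with the Bianchi identity and polynomial expressions in the curvatures $\nabla_a^2$ as the recurring tools. This is the standard Chern--Simons picture in the de Rham setting, but it is not what is happening here, and as written it would not close the argument. The form assigned to a face $[j_0<\dots<j_m]$ is $\tr\big(\tilde f_{j_m,j_0}^{-1}\,\nabla(\tilde f_{j_m,j_{m-1}})\cdots\nabla(\tilde f_{j_1,j_0})\big)u^m$: no curvature term ever appears, so the Bianchi identity has nothing to act on, and since the target carries the zero differential there is no exterior derivative against which a transgression/Stokes argument could be run --- the alternating sum of subface labels must vanish \emph{identically} as holomorphic forms, not merely up to an exact term. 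The correct generalization is simply the computation you already did for $m=2$, carried out uniformly: expand each merged factor $\nabla(\tilde f_{j_{a+1},j_{a-1}})$ by the Leibniz rule into $\nabla(\tilde f_{j_{a+1},j_a})\tilde f_{j_a,j_{a-1}}+\tilde f_{j_{a+1},j_a}\nabla(\tilde f_{j_a,j_{a-1}})$, use $\tilde f_{j_m,j_1}^{-1}=\tilde f_{j_1,j_0}\tilde f_{j_m,j_0}^{-1}$ and $\tilde f_{j_{m-1},j_0}^{-1}=\tilde f_{j_m,j_0}^{-1}\tilde f_{j_m,j_{m-1}}$ on the two end terms, and observe that after applying cyclicity of the trace the resulting terms cancel in pairs (a telescoping sum). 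You should replace the proposed master identity with this direct computation; no new tools beyond those you used for $m=2$ are needed.
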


We note that in this paper we have chosen a situation with vanishing differential, i.e., $(\Ohol^\bu(U), d= 0)$. In a sequel paper, this construction is completed to a map of presheaves whose target is similarly built out of holomorphic forms but now with the differential $\partial$ instead of the zero differential. These discussions are closely related by appropriate Hodge-to-de Rham spectral sequences. In addition, a complete analog of this story in the smooth category, wherein flat connections on smooth vector bundles play the role of holomorphic connections on holomorphic bundles, follows naturally from this description. 

In section \ref{SEC:Totalization}, we apply the simplicial presheaf $\HVB$ to the \Cech nerve simplicial manifold $\CN\mc U$ of a cover $\mc U$ of a complex manifold $X$ to obtain a cosimplicial simplicial set. The totalization of this cosimplicial simplicial set is a simplicial set whose vertices are the vector bundles on $X$ endowed with non-matching holomorphic connections on each open set of the cover $\mc U$. The edges are bundle isomorphisms, which do not necessarily respect the locally chosen connections, etc. 

Similarly, we can evaluate the simplicial presheaf $\OM$ on the \Cech nerve of $\mc U$ and pass to the totalization to obtain a simplicial set.  The vertices of the simplicial set are closed elements of the \Cech complex of holomorphic forms with the zero internal differential.  We refer to this \Cech complex as the \v{C}ech-Hodge complex, in contrast to the \v{C}ech-de Rham complex, which has the $\partial$ operator on the holomorphic forms. The edges of the totalization are witnesses to two such closed elements in the \v{C}ech-Hodge complex representing the same Hodge cohomology class, and similarly for higher simplices, with further elements witnessing how a sum of witnesses in the previous simplicial degree is realized as a coboundary. 

We then look at the map induced by $\CS$ on the totalization. On the $0$-simplices, the totalized map gives a combinatorial formula for the Chern character of a bundle in the \v{C}ech-Hodge complex, in terms of its transition functions. Over the 1-simplices, we obtain a formula for the Hodge-Chern-Simons invariant of bundle isomorphisms in the \Cech complex, with respect to the domain and range connections, and in terms of the transition functions of the bundle. Similar invariants are obtained from the higher simplices.

We note that that totalization has an interesting effect. Before totalization, the map on the vertices was rather trivial, encoding only the rank of the vector bundle. After totalization, the map on vertices becomes a cocycle representative of the total Chern character of the holomorphic bundle in the \v{C}ech-Hodge complex, which is quite non trivial. The following statement summarizes the above.

\setcounter{section}{3} \setcounter{theorem}{2} \begin{corollary}
Given a complex manifold $M$ with a cover $\mc U=\{U_i\}_{i\in I}$, the map $\CS(\CN\mc U):\HVB(\CN\mc U)\to \OM(\CN\mc U)$ is a map of cosimplicial simplicial sets, and thus induces a simplicial set map on the totalization, i.e., a map
\begin{equation*}
\Tot(\CS(\CN\mc U)):\Tot(\HVB(\CN\mc U))\to \Tot(\OM(\CN\mc U))\stackrel \sim\longrightarrow \DKSet(\vC^\bu(\mc U, \Ohol^\bu)\ul).
\end{equation*}
\end{corollary}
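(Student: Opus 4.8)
The plan is to split the statement into three assertions and address them in increasing order of difficulty: (i) that $\CS(\CN\mc U)$ is a map of cosimplicial simplicial sets, (ii) that it consequently induces a map on totalizations, and (iii) that the target totalization is weakly equivalent to $\DKSet\big(\vC^\bu(\mc U,\Ohol^\bu)\ul\big)$. The first two assertions are formal consequences of Theorem~2.4 and functoriality, while the third contains the actual content.

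For (i), recall that by Theorem~2.4 the map $\CS\colon\HVB\to\OM$ is a natural transformation of functors $\CMan^{op}\to\sSet$. The \v{C}ech nerve $\CN\mc U$ is a simplicial object of the site, with $(\CN\mc U)_n=\coprod_{i_0,\dots,i_n}U_{i_0\cdots i_n}$ and structure maps induced by the restrictions $U_{i_0\cdots i_n}\hookrightarrow U_{i_0\cdots\widehat{i_j}\cdots i_n}$. Levelwise evaluation of a simplicial presheaf on $\CN\mc U$ therefore produces a cosimplicial simplicial set whose cofaces and codegeneracies are dual to the simplicial structure of $\CN\mc U$, and evaluating the natural transformation $\CS$ produces a map of such; naturality of $\CS$ with respect to the restriction maps is exactly the commutativity needed. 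For (ii), since $\Tot$ is a functor from cosimplicial simplicial sets to simplicial sets, it carries $\CS(\CN\mc U)$ to the asserted map $\Tot(\CS(\CN\mc U))$.

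For (iii) I would first analyze the target levelwise. A presheaf carries the disjoint union $(\CN\mc U)_n$ to the corresponding product, and $\DKSet$ preserves products, so
\[
\OM\big((\CN\mc U)_n\big)\;=\;\DKSet\Big(\prod_{i_0,\dots,i_n}\Ohol^\bu(U_{i_0\cdots i_n})\ul\Big)\;=\;\DKSet\big(\vC^n(\mc U,\Ohol^\bu)\ul\big);
\]
here the identification with \v{C}ech cochains uses that holomorphic forms vanish above the complex dimension, so the $[u]^{\bu\leq 0}$-truncation commutes with the (a priori infinite) \v{C}ech products. Thus $[n]\mapsto\vC^n(\mc U,\Ohol^\bu)\ul$ is a cosimplicial object in non-positively graded chain complexes whose cofaces are the restriction maps, and taking the alternating sum of the cofaces (the \v{C}ech differential) and forming the total complex recovers precisely $\vC^\bu(\mc U,\Ohol^\bu)\ul$. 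Since the internal differential on $\Ohol^\bu$ is zero in the Hodge setting, the \v{C}ech differential is the only surviving differential of this total complex.

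It then remains to invoke the compatibility of totalization with Dold--Kan: for a cosimplicial chain complex $C^\bu$ there is a natural weak equivalence $\Tot\big(\DKSet(C^\bu)\big)\stackrel{\sim}{\to}\DKSet\big(\mathrm{Tot}(C^\bu)\big)$, with $\mathrm{Tot}$ the total complex of the associated double complex; applying it to $C^\bu=\vC^\bu(\mc U,\Ohol^\bu)\ul$ yields the equivalence in the statement. I expect this compatibility to be the main obstacle. The cleanest justification treats $\DKSet$ as (the underlying simplicial set of) a right adjoint, hence limit-preserving and compatible with homotopy limits, and recognizes $\Tot$ as the homotopy limit over $\Del$ built from the cotensors $\mathrm{Map}(\Delta[n],-)$ and the matching objects; the equivalence is then a form of the cosimplicial Eilenberg--Zilber theorem, namely that Moore normalization computes the total complex. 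The work that must actually be done is bookkeeping: verifying Reedy fibrancy (or passing to the fat totalization) so that $\Tot$ genuinely computes the homotopy limit, reconciling normalized versus unnormalized cochains so the codegeneracies are correctly accounted for, tracking signs so that the assembled differential is exactly the \v{C}ech differential, and confirming that there is no $\varprojlim^1$ obstruction by working with the product-total complex.
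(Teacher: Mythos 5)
Your proposal is correct and follows essentially the same route as the paper: parts (i) and (ii) are exactly the paper's proposition \ref{PROP:cosimpl-simpl-set} (naturality of $\CS$ from theorem \ref{THM:CS-natural-transformation} composed with $\CN\mc U$, then functoriality of $\Tot$), and part (iii) is the paper's proposition \ref{PROP:Tot(Ohol)=Cech}, which likewise identifies the target levelwise as a product, identifies the total complex with $\vC^\bu(\mc U,\Ohol^\bu)\ul$ (lemmas \ref{lemma:TotandCech} and \ref{lemma:Totandq}, using that $\quot$ is a right adjoint), and obtains the weak equivalence $\Tot\circ\DKSet\to\DKSet\circ\tot$ from the Eilenberg--Zilber map (lemmas \ref{lemma:TotandDK} and \ref{lemma:TotandForget}). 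The bookkeeping you flag (normalization, signs, commuting $\quot$ past products and equalizers) is precisely what those lemmas carry out.
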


When transition functions take values in $G=GL(n, \C)$, there is a more direct description of the Hodge Chern character analyzed in the diagram below, which we describe in section \ref{SEC:product-bundles}.

\setcounter{section}{4} \setcounter{theorem}{16} \begin{theorem} 
There is a commutative diagram of simplicial sets,
\[
\xymatrix{ 
\CMan^{\Del^{op}}(\CN \mc U^{[\bu]},BG) \ar^{\quad \beta}[r] \ar_{\gamma}[d] & \Tot(\HVB(\CN\mc U)) \ar^{\Tot(\CS(\CN\mc U))}[dd]  
\\ 
(\vC^\bu(\mc U^{[\bu]},\Ohol^\bu))^{even}_{closed} \ar_{\iota}[d] &
\\
\DKSet(\vC^\bu(\mc U, \Ohol^\bu)\ul)  & \Tot(\OM(\CN\mc U)) \ar^{}[l] }
\]
\end{theorem}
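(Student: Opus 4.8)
The plan is to construct the three new maps $\beta$, $\gamma$, and $\iota$, and then prove commutativity by reducing both composites to explicit cochains in the \v{C}ech--Hodge complex. The vertical map $\Tot(\CS(\CN\mc U))$ and the bottom equivalence $\Tot(\OM(\CN\mc U))\stackrel{\sim}{\longrightarrow}\DKSet(\vC^\bu(\mc U,\Ohol^\bu)\ul)$ are already produced by Theorem~2.4 and Corollary~3.3, so nothing new is needed there. A map $\CN\mc U^{[\bu]}\to BG$ of simplicial complex manifolds is exactly a system of transition functions $g_{i_0\cdots i_p}$ satisfying the cocycle conditions; I would define $\beta$ as the clutching construction that sends such a system to the point of $\Tot(\HVB(\CN\mc U))$ given by the product bundles $U_i\times\C^n$ carrying the trivial holomorphic connection on each $U_i$ and glued by the $g$'s, extended levelwise in $[\bu]$. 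That this is a well-defined vertex of the totalization is exactly the description of $\Tot(\HVB(\CN\mc U))$ recalled before the Corollary. I would take $\gamma$ to be the direct Bott-type formula assembling the Maurer--Cartan forms $g^{-1}dg$ and their traces into an even element of $\vC^\bu(\mc U^{[\bu]},\Ohol^\bu)$, and $\iota$ to be the canonical map that views a closed even cocycle, of total degree zero and a cycle, as a vertex of the Dold--Kan simplicial set, using $u$ to place the degree-$2k$ component in the $u^k$ summand of the degree-zero part of $\vC^\bu(\mc U,\Ohol^\bu)\ul$.

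The heart of the argument is that $\beta$ feeds $\CS$ only bundles carrying the trivial flat connection on each open set. Consequently the operator ``$dg$'', defined in each simplicial degree by pre- and post-composition with the chosen connections, reduces to the ordinary exterior derivative of $g$ viewed as a matrix-valued function, so that $g^{-1}dg$ is literally the Maurer--Cartan form of the transition function and the decorations prescribed by $\CS$ collapse to traces of products of these Maurer--Cartan forms. Thus $\CS$ applied to the clutched data produces, in nerve-simplicial degree $p$, a decoration of the standard $p$-simplex whose top cell is the trace of such a product paired with the appropriate power of $u$. Under the standard description of the totalization, a $0$-simplex is a compatible family of maps $\Delta^p\to\OM(\CN\mc U_p)$, and the Dold--Kan correspondence reads off the top, non-degenerate cell into \v{C}ech degree $p$; tracing this through shows that $\Tot(\CS(\CN\mc U))\circ\beta$, followed by the equivalence of the Corollary, reproduces exactly $\iota\circ\gamma$. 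Because every map in sight is defined by the same universal formulae applied levelwise, naturality in the simplicial variable $[\bu]$ is then automatic, and the square commutes.

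The main obstacle is the combinatorial bookkeeping required to match the two composites across all nerve-simplicial and \v{C}ech-cosimplicial degrees at once: one must keep track of the decorations that $\OM$ attaches to every face of every standard simplex, the Dold--Kan normalization that selects the top cells, and the interleaving of the nerve direction of $\HVB$ with the \v{C}ech direction of $\CN\mc U$ inside the totalization. A subsidiary point to be verified on its own is that $\gamma$ really lands in the closed even part: evenness reflects that the Chern character is a sum of components in even total degree, and closedness under the differential of the \v{C}ech--Hodge complex follows from the cocycle condition on the transition functions together with the Maurer--Cartan equation $d(g^{-1}dg)=-(g^{-1}dg)^2$ and the cyclic invariance of the trace, which together force the relevant alternating sums to telescope. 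Once these compatibilities are established in each bidegree, the commutativity of the full diagram follows.
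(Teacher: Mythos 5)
Your plan gets the right definitions of $\beta$ and $\gamma$ (product bundles with the trivial connection $\partial$ glued by the transition functions, and the Bott-type traces $\tr(g^{-1}_{j_p,j_0}\partial(g_{j_p,j_{p-1}})\cdots\partial(g_{j_1,j_0}))$), and the key observation that feeding $\CS$ only trivially-connected bundles collapses its decorations to traces of Maurer--Cartan forms is exactly the mechanism the paper exploits. The verification that $\gamma$ lands in the closed even part is also essentially right, except that in the Hodge complex the internal differential is zero, so closedness is $\delta$-closedness and follows from the Leibniz rule for $\partial$, the cocycle condition and cyclicity of the trace alone; the Maurer--Cartan equation you invoke plays no role here.

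The genuine gap is in $\iota$ and in everything above simplicial degree $0$. You define $\iota$ only on vertices, where indeed $\DKSet(C)_0=C^0$ and one just reads off the even components against powers of $u$. But the theorem asserts a commutative square of simplicial sets, and an $n$-simplex of the left column is a closed even cocycle on the $(n{+}1)$-fold duplicated cover $\mc U^{[n]}$, which must be converted into a chain map $N(\Z\Delta^n)\to\vC^\bu(\mc U,\Ohol^\bu)[u]^{\bu\leq 0}$. This requires the combinatorial ``integration over the fiber'' $\intfib{k}:\vC^\bu(\mc U^{[k]},\mc A)\to\vC^\bu(\mc U,\mc A)$, summing over all splittings of an index string $(i_0,\dots,i_q)$ into $k{+}1$ levels with signs $(-1)^{\stp_1+\cdots+\stp_k}$, together with the Stokes-type identity $\intfib{k}\delta(\mu)=(-1)^k\delta(\intfib{k}\mu)+\sum_j(-1)^j\intfib{k-1}\widetilde{\delta_j}(\mu)$ needed to see that $\iota(c)$ is a chain map at all. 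None of this appears in your proposal. Relatedly, your closing claim that commutativity is ``automatic'' by naturality once the $0$-simplices match is false: two simplicial maps agreeing on vertices need not agree on higher simplices, and here the $n$-simplices of $\CMan^{\Del^{op}}(\CN\mc U^{[n]},BG)$ carry genuinely new data (the maps $f^p_i$ between levels) not determined by vertices. The actual content of the paper's proof is matching, in every degree, the Eilenberg--Zilber shuffle sum produced by $\Tot(\CS(\CN\mc U))\circ\beta$ against the sum over $k$-step splittings produced by $\iota\circ\gamma$, signs included ($\sgn(\mu,\nu)=(-1)^{\stp_1+\cdots+\stp_p}$ plus the $(-1)^{p(p-1)/2}$ from the totalization-to-\v{C}ech isomorphism). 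You correctly flag this bookkeeping as the main obstacle, but flagging it is not the same as doing it, and without $\intfib{k}$ you do not yet have the map whose commutativity is to be checked.
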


The above picture relates to the fact that on complex Lie group with a linear $n$-dimensional representation, there is a sequence of forms living on the Cartesian products of $G^{\times p}$, for every $p=0, 1, 2, \dots$ given by 
\begin{equation*}
\Chern:=(n, \tr( g\partial (g^{-1})), \tr( g_1g_2 \partial (g_2^{-1})\partial (g_1^{-1})),\dots).
\end{equation*}

These forms assemble into a single closed element in the complex of Hodge forms on the stack $BG=[*/G]$ represented by the following simplicial manifold:

\[\begin{tikzcd}
  * \arrow[from=r,  transform canvas={yshift=-0.8ex}]
         \arrow[from=r, transform canvas={yshift=0.8ex}]
          \arrow[r]
    & G
        \arrow[from=r,  transform canvas={yshift=-1.6ex}]
        \arrow[r, transform canvas={yshift=-0.8ex}]
        		 \arrow[from=r]
        \arrow[r, transform canvas={yshift=0.8ex}]
            	  \arrow[from=r,  transform canvas={yshift=1.6ex}]
& G \times G 
 \arrow[r, swap, "s_i",  transform canvas={yshift=-0.8ex}]
         \arrow[from=r, swap, "d_j",  transform canvas={yshift=0.8ex}]
        & \cdots.
\end{tikzcd}\]

As mentioned, this simplicial presheaf point of view leads to a map of simplicial sets whose value on the vertices reproduces the Chern character formulae. The value on the $1$-simplices is the Chern character of a map of bundles and higher dimensional simplices of the totalization are new invariants that should be thought of as an infinite hierarchy of Chern character type invariants for composable sequences of bundle maps. We give a description of the sequence of holomorphic invariants in the \v{C}ech-Hodge cochain complex that correctly mirror the sequence of the Chern-Simons invariants present in the smooth picture for bundles. 

In section \ref{SEC:Group-action}, a further application to equivariant theories, and more generally bundles on simplicial manifolds, is worked out. Applying $\CS$ to the stack $[M/G]$, we obtain an induced map of simplicial sets as follows.
\begin{equation*}
\Tot(\CS([M/G])):\Tot(\HVB([M/G]))\to \Tot(\OM([M/G])).
\end{equation*}

We describe $\Tot(\HVB([M/G]))$ more explicitly.

\setcounter{section}{5} \setcounter{theorem}{2} \begin{proposition}
The simplices of $\Tot(\HVB([M/G]))$ have the following interpretation.
\begin{enumerate}
\item
A $0$-cell in $\Tot(\HVB([M/G]))$ consists precisely of a $G$-equivariant bundle, $E$, with connection, $\nabla$, where $\nabla$ is not required to satisfy any condition with respect to the $G$-action.
\item
An $n$-cell in $\Tot(\HVB([M/G]))$ consists precisely of a sequence of $G$-equivariant bundles, $E^{(0)},\dots, E^{(n)}$,  and $G$-equivariant maps, $\al_0,\dots, \al_{n-1}$,
\[ E^{(0)} \xrightarrow{\alpha_0} E^{(1)} \xrightarrow{\alpha_1} \dots \xrightarrow{\alpha_{n-1}} E^{(n)}\] 
where each bundle $E_i \to M$ has a connection $\nabla_i$, which are not required to satisfy any conditions with respect to the $G$-action or the bundle maps.
\end{enumerate}
\end{proposition}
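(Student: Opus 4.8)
The plan is to unwind the definition of the totalization of the cosimplicial simplicial set $[p]\mapsto \HVB([M/G]_p)$ and to read off the descent data one cosimplicial degree at a time. Recall that the simplicial manifold representing $[M/G]$ has $[M/G]_p\cong G^{\times p}\times M$, with face and degeneracy maps built from the group multiplication and the action, and that $\HVB([M/G]_p)=\Nerve(\HVBnabla(G^{\times p}\times M))$. By the end formula for the totalization of a cosimplicial simplicial set, an $n$-cell of $\Tot(\HVB([M/G]))$ is a family of simplicial maps $f^p\colon \Delta^p\times\Delta^n\to \HVB([M/G]_p)$, one for each $[p]\in\Del$, compatible with all cosimplicial structure maps in $p$. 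Since the nerve is fully faithful and $\Delta^p\times\Delta^n=\Nerve([p]\times[n])$, each $f^p$ is the same datum as a functor $F^p\colon [p]\times[n]\to \HVBnabla(G^{\times p}\times M)$, and the family $\{F^p\}$ is natural in $[p]$. First I would extract the underlying data in cosimplicial degree $p=0$; then I would show that naturality in the $p$-direction promotes this to equivariant data; and finally I would verify that degrees $p\ge 2$ impose exactly the cocycle condition and nothing more.

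In cosimplicial degree $p=0$ we have $G^{\times 0}\times M=M$, and $F^0\colon [0]\times[n]=[n]\to \HVBnabla(M)$ is simply a chain of $n$ composable morphisms in the groupoid of holomorphic bundles-with-connection over $M$,
\[
(E^{(0)},\nabla_0)\xrightarrow{\ \al_0\ }(E^{(1)},\nabla_1)\xrightarrow{\ \al_1\ }\cdots\xrightarrow{\ \al_{n-1}\ }(E^{(n)},\nabla_n),
\]
where each $\al_j$ is a holomorphic bundle isomorphism over $M$ ignoring the connections. This produces the bundles, connections, and maps asserted in the statement; the case $n=0$ is part (1). It remains to see that the remaining cosimplicial data endow each $E^{(j)}$ with a $G$-equivariant structure, force each $\al_j$ to be $G$-equivariant, and leave the connections $\nabla_j$ completely unconstrained.

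Next I would analyze the naturality constraints relating degrees $p=0$ and $p=1$. Naturality with respect to a vertex inclusion $[0]\to[p]$ (respectively an edge inclusion $[1]\to[p]$) identifies the value of $F^p$ at each vertex (respectively $p$-direction edge) with the pullback, along the corresponding structure map $G^{\times p}\times M\to M$, of the data of $F^0$ (respectively $F^1$); in particular no new connection data appears beyond the $\nabla_j$ on $M$. The two cofaces $[0]\to[1]$ correspond to the two face maps $d_0,d_1\colon G\times M\to M$ (given by the action and the projection), so the object of $F^1$ at the vertices $(0,j)$ and $(1,j)$ must be $d_1^*(E^{(j)},\nabla_j)$ and $d_0^*(E^{(j)},\nabla_j)$. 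The $p$-direction edge of $F^1$ over the vertex $j$ is then an isomorphism $\phi^{(j)}\colon d_1^*E^{(j)}\xrightarrow{\sim} d_0^*E^{(j)}$ over $G\times M$ -- precisely a $G$-equivariant structure on $E^{(j)}$ -- while the square of $F^1$ over $\{j,j+1\}\times[1]$ expresses the commutation of $\al_j$ with $\phi^{(j)}$ and $\phi^{(j+1)}$, i.e. the $G$-equivariance of $\al_j$. Crucially, $\phi^{(j)}$ is a morphism in $\HVBnabla(G\times M)$ and hence ignores connections, so it need not intertwine $d_0^*\nabla_j$ with $d_1^*\nabla_j$; this is exactly the statement that $\nabla_j$ need not be $G$-invariant, and since each $\al_j$ likewise ignores connections, the maps impose no relation among the $\nabla_j$.

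The main obstacle, and the final step, is to show that cosimplicial degrees $p\ge 2$ introduce no data beyond the cocycle (associativity) condition on the family $\{\phi^{(j)}\}$. By the naturality just described, $F^p$ is entirely determined by $F^0$ and $F^1$ through pullback; the only content of degree $p\ge 2$ is that these pulled-back edges must genuinely assemble into a functor. At $p=2$ the three face maps $G\times G\times M\to G\times M$ realize the relevant edges of $F^2$ as the three pullbacks of $\phi^{(j)}$, and the composability of the resulting $2$-simplex of the nerve is exactly the cocycle identity making $(E^{(j)},\phi^{(j)})$ a genuine $G$-equivariant bundle. For $p\ge 3$ there is no further constraint: because $\HVBnabla(G^{\times p}\times M)$ is a groupoid, its nerve satisfies the Segal conditions, so functoriality of $F^p$ on $[p]$ reduces to functoriality on the spine, which is already generated by the $p\le 2$ relations, and the higher coherences are automatic. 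Assembling these observations identifies an $n$-cell with precisely a chain of $G$-equivariant bundles $E^{(0)}\to\cdots\to E^{(n)}$ together with $G$-equivariant maps $\al_0,\dots,\al_{n-1}$, each $E^{(j)}$ carrying an arbitrary connection $\nabla_j$ subject to no compatibility with the $G$-action or the $\al_j$, which is the assertion of the Proposition.
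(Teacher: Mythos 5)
Your proposal is correct and takes essentially the same route as the paper's proof: both unwind the totalization cosimplicial degree by degree, reading off the bundles, connections, and maps $\al_j$ from the $p=0$ level over $M$, the equivariant structure maps $\phi^{(j)}$ and the equivariance of the $\al_j$ from the $p=1$ level over $M\times G$, the cocycle identity from $p=2$, and observing that no further conditions arise in higher cosimplicial degrees. The only cosmetic difference is that you phrase the unwinding via functors $[p]\times[n]\to \HVBnabla(G^{\times p}\times M)$ and the Segal/spine argument, where the paper checks the same compatibilities by hand.
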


The following corollary states that we can use the map $\Tot(\CS([M/G]))$ from equation \eqref{EQU:Tot(CS(M/G))} as a measure for the connection $\nabla$ to be $G$-invariant.

\setcounter{section}{5} \setcounter{theorem}{4} \begin{corollary}
Let $(E, M, \pi, \rho, \varphi)$ be a $G$-equivariant bundle with connection $\nabla$, which, by proposition \ref{PROP:Tot(HVB(M/G))}\eqref{PROP:Tot(HVB(M/G))_0}, we may interpret as a $0$-simplex in $\Tot(\HVB([M/G]))_0$. If the connection $\nabla$ is $G$-invariant, then $\Tot(\CS([M/G]))$ applied to this is zero in all positive holomorphic form degrees.
\end{corollary}

There is also an infinity homotopy coherent version of all of this, where vector bundles are replaced by derived families whose clutching functions fit together only up to an infinite system of coherent homotopies. This relates to the work of \cite{TT1, OTT1}, which in fact motivated us and was the starting point of our project. Here, we have avoided discussing this homotopy coherent generalization because the strict case is sufficiently rich by itself. The homotopy coherent story, which will be discussed in a forthcoming paper, will be employed to obtain invariants of the derived automorphisms of coherent sheaves on complex manifolds. One foreseeable direction is to develop a commutative diagram of spaces where after applying $\pi_0$ results in the classical Grothendieck-Riemann-Roch (GRR) commutative square. This will extend the differential geometric discussion of GRR established by Toledo and Tong \cite{TT1} and with O'Brian \cite{OTT2} to the entire K-theory spectrum. It will also extend the GRR from ordinary manifolds to the equivariant setting and more generally to simplicial manifolds in an appropriate sense.

\begin{ack}
We would like to thank Domingo Toledo for email correspondences and informing us of his paper with Tong on Green's work \cite{TT2}. While Green's work does not enter this paper, it will be relevant to our forthcoming work on a homotopy coherent version of the discussion here. We also would like to thank Dennis Sullivan for numerous valuable conversations about the local formulae for characteristic classes. Mahmoud Zeinalian would also like to acknowledge a conversation with Julien Grivaux regarding the work of Toledo and Tong, as well as Green's work, on simplicial vector bundles and the Chern character. We would like to add that the forthcoming results of his student, Timothy Hosgood, on simplicial connections and the Chern character are entirely independent of our work. Mahmoud Zeinalian would like to thank the Max Planck Institute and Universit\'e Paris 13 for their hospitality during his visits.
\end{ack}

\setcounter{section}{1} \setcounter{theorem}{0} 
\section{A map of simplicial presheaves  $\CS$}\label{SEC:Prestacks}

In this section, we define two simplicial presheaves and a map between them. First, $\HVB(U)=\Nerve ( \HVBnabla(U))$ is the nerve of holomorphic vector bundles on a complex manifold $U$ and, second, $\OM(U)=\DK(\Ohol^\bu(U)\ul)$ is the Dold-Kan dual of holomorphic forms on $U$. We produce a map $\CS:\HVB\to \OM$ of simplicial presheaves between those, which we will show in section \ref{SEC:Totalization} to be closely related to the Atiyah class.

\begin{definition}
We start by defining the functor $\HVBnabla:\CMan^{op}\to \lCat$ of holomorphic vector bundles with connection of a complex manifold. For a complex manifold $U\in \Ob(\CMan)$, denote by $\HVBnabla(U)\in \lCat$ the (large) groupoid whose objects are finite dimensional holomorphic vector bundles $E\to U$ over $U$ together with holomorphic connection $\nabla_E$ on $E$, and whose morphisms $f\in \HVBnabla(E_0,E_1)$ consist of holomorphic bundle isomorphisms $f:E_0\to E_1$, which need not to respect the connections $\nabla_{E_0}$ and $\nabla_{E_1}$ in any way. Any map of complex manifolds $\varphi:U\to U'$ induces a functor $\HVBnabla(\varphi):\HVBnabla(U')\to \HVBnabla(U)$ via pullback, so that we have a functor $\HVBnabla:\CMan^{op}\to \lCat$. Composing $\HVBnabla$ with the nerve $\Nerve:\lCat\to \lsSet$ thus gives a simplicial presheaf, i.e., a functor $\HVB:=\Nerve\circ \HVBnabla:\CMan^{op}\to \lsSet$.
\end{definition}

Next, for a complex manifold $U\in \CMan$, we consider the algebra $\Ohol^\bu(U)$ of holomorphic differential forms on $U$, and thus have a cochain complex $\Ohol^\bu(U)\uul$, which becomes a simplicial set after applying the Dold-Kan functor. 

\begin{definition}\label{DEF:OM}
For a complex manifold $U\in \Ob(\CMan)$, consider the (non-negatively graded) cochain complex of holomorphic forms $\Ohol^\bu(U)$ on $U$ with zero differential $d=0$. By definition \ref{DEF:Chain-Cat}, $\Ohol^\bu(U)\ul=\quot(\Ohol^\bu(U))$ is a chain complex with zero differential, and, by theorem \ref{THM:Dold-Kan}, the Dold-Kan functor yields a simplicial abelian group $DK(\Ohol^\bu(U)\ul)$, which we think of as a simplicial set, $\OM(U)=\DKSet(\Ohol^\bu(U)\ul)$. Since holomorphic forms pull back via a holomorphic map $\varphi:U\to U'$, this assignment defines a simplicial presheaf $\OM:\CMan^{op}\to \sSet$ by $\OM:=\DKSet(\Ohol^\bu(\cdot)\ul):\CMan^{op}\to \sSet$.
\[
\OM:\CMan^{op}
\stackrel{\Ohol^\bu(-)}{\longrightarrow}\Chain^+
\stackrel{\quot}{\longrightarrow}\Chain^-
\stackrel{\DK}{\longrightarrow}\sAb
\stackrel{\mc F}{\longrightarrow}\sSet
\]
\end{definition}

The main goal of this section is to obtain map of simplicial presheaves  from $\HVB$ to $\OM$, i.e., a natural transformation $\CS: \HVB \to \OM$.
\begin{definition}\label{DEF:CS-map}
We define the Chern character map $\CS: \HVB \to \OM$ by defining for each complex manifold $U\in \Ob(\CMan)$ a map of simplicial set $\CS(U)_\bu:\Nerve(\HVBnabla(U))_\bu\to \DKSet(\Ohol^\bu(U)\ul)_\bu$ explicitly for each simplicial degree $k$, as follows.
\begin{enumerate}
\item[$k=0$:]
A $0$-simplex in the nerve $\Nerve(\HVBnabla(U))_0$  is an object of $\HVBnabla(U)$, i.e., a holomorphic vector bundle $E\to U$ with holomorphic connection $\nabla_E$. To this data, we need to assign a $0$-simplex in $\DKSet(\Ohol^\bu(U)\ul)_0$.  This amounts to associating to $(E \to U, \nabla)$ a polynomial of holomorphic forms $\om_0+\om_2 u+\om_4 u^2+\dots\in \Ohol^\bu(U)\uu^0$, where each $\om_i\in \Ohol^i(U)$. We define $\CS(U)_0$ by mapping $E$ to the constant function $\dim(E)$, the dimension of the fiber of $E$, without any higher $u$-terms. As a short hand, we write $\CS(U)_0(E)$ by labeling the $0$-simplex by $\dim(E)$:
\begin{equation*}
\begin{tikzpicture}[scale=0.5]
\node (E) at (0, 0) {};  \fill (E) circle (3pt) node[right] {$\dim(E)$};
\end{tikzpicture}
\end{equation*}
\item[$k=1$:]
A $1$-simplex in the nerve $\Nerve(\HVBnabla(U))_1$ consists of two holomorphic vector bundles $E_0\to U$ and $E_1\to U$ with connections $\nabla_{E_0}$ and $\nabla_{E_1}$ and a bundle isomorphism $f:E_0\to E_1$, which may not respect the connections. To this data, we assign a $1$-simplex in $\DKSet(\Ohol^\bu(U)\ul)_1$, which is a chain map from $\dots \rightarrow0\rightarrow<e_{0,1}>\rightarrow <e_0,e_1>\rightarrow 0\rightarrow\dots$ (in the notation from example \ref{REM:NZDeltak}) to $\Ohol^\bu(U)\ul$. Assign to $e_0$ and $e_1$ the dimensions $\dim(E_0)=\dim(E_1)$, thought of as elements in $\Ohol^\bu(U)\uu^{0}$, and assign to $e_{0,1}$ the trace $\tr(f^{-1}\nabla_{1,0}(f))\cdot u\in \Ohol^\bu(U)\uu^{-1}$.  Here, $\nabla_{1,0}$ is the induced connection of $\nabla_{E_0}$ and $\nabla_{E_1}$ on $Hom(E_0,E_1)$.  Note that $\tr(f^{-1}\nabla_{1,0}(f))\cdot u\in \Ohol^\bu(U)\uu^{-1}$ has no higher powers of $u$. Informally, we write the chain map $\CS(U)_1(E_0\stackrel f\to E_1):N(\Z\Delt{1})\to \Ohol^\bu(U)\ul$ by labeling the interval as follows:
\begin{equation*}
\begin{tikzpicture}[scale=0.5]
\node (E0) at (0, 0) {}; \fill (E0) circle (4pt) node[above] {$\dim(E_0)$};
\node (E1) at (12, 0) {}; \fill (E1) circle (4pt) node[above] {$\dim(E_1)$};
\draw [->] (E0) -- node[above] {$ \tr(f^{-1}\nabla_{1,0}(f))\cdot u$} (E1);
\end{tikzpicture}
\end{equation*}
 \item[$k\geq 2$:]
 A $k$-simplex in the nerve $\Nerve(\HVBnabla(U))_k$ is a sequence of holomorphic vector bundles $E_0,\dots, E_k$ with holomorphic connections $\nabla_{E_0}, \dots, \nabla_{E_k}$, and holomorphic bundle isomorphisms $E_0\stackrel {f_1}\to E_1\stackrel {f_2}\to\dots\stackrel {f_k}\to E_k$ not necessarily respecting the connections. For $0\leq p<q\leq k$ we denote by $\tilde f_{q,p}:E_p\to E_q$ the composition $\tilde f_{q,p}:=f_{q}\circ \dots\circ f_{p+1}$, i.e., $E_p\stackrel {f_{p+1}}\to E_{p+1}\stackrel {f_{p+2}}\to \dots  \stackrel {f_q}\to E_q$. Now, to a $k$-simplex in the nerve we assign a $k$-simplex in $\DKSet(\Ohol^\bu(U)\ul)_k$, which is a chain map $N(\Z\Delt{k})$ to $\Ohol^\bu(U)\ul$. The generator $e_i$ of $N(\Z\Delt{k})$, where $i=0,\dots, k$, gets assigned $\dim(E_i)\in \Ohol^\bu(U)\uu^0$. For $\ell>0$, the generator $e_{i_0,\dots, i_\ell}$ with $0\leq i_0<\dots<i_\ell\leq k$ gets assigned to the following element in $\Ohol^\bu(U)\uu^{-\ell}$,
 \begin{equation}\label{EQU:tr(f-1nablaf-nablag)}
 \tr\Big(\tilde f_{i_{\ell},i_{0}} ^{-1}  
 \circ\nabla_{i_{\ell},i_{\ell-1}}(\tilde f_{i_{\ell},i_{\ell-1}})  \circ\dots\circ \nabla_{i_2,i_1}(\tilde f_{i_2,i_1})\circ\nabla_{i_1,i_0}(\tilde f_{i_1,i_0})\Big)\cdot u^{\ell},
 \end{equation}
where $\nabla_{q,p}$ is the induced connection on $Hom(E_p,E_q)$ via the connections $\nabla_{E_p}$ and $\nabla_{E_q}$. Informally, we picture the chain map $\CS(U)_k(E_0\stackrel {f_1}\to E_1\stackrel {f_2}\to\dots\stackrel {f_k}\to E_k):N(\Z\Delt{k})\to \Ohol^\bu(U)\ul$ by labeling the cells of a $k$-simplex with the terms from \eqref{EQU:tr(f-1nablaf-nablag)}. For example, for the $k=2$, and the $2$-simplex  $E_0\stackrel f \to E_1\stackrel g \to E_2$ in the nerve $\Nerve(\HVBnabla(U))$, we get
\begin{equation*}
\begin{tikzpicture}[scale=0.5]
\node (E0) at (0, 0) {}; \fill (E0) circle (4pt) node[below] {$\dim(E_0)$};
\node (E1) at (10, 4) {}; \fill (E1) circle (4pt) node[above] {$\dim(E_1)$};
\node (E2) at (20, 0) {}; \fill (E2) circle (4pt) node[below] {$\dim(E_2)$};
\draw [->] (E0) -- node[above left] {$\tr(f^{-1}\nabla_{1,0}(f))\cdot u$} (E1);
\draw [->] (E1) -- node[above right] {$\tr(g^{-1}\nabla_{2,1}(g))\cdot u$} (E2);
\draw [->] (E0) -- node[below] {$\tr((gf)^{-1}\nabla_{2,0}(gf))\cdot u$} (E2);
\node (C) at (10,1.5) {$ \tr((gf)^{-1}\nabla_{2,1}(g)\nabla_{1,0}(f))\cdot u^2$};
\end{tikzpicture}
\end{equation*}
\end{enumerate}
In the next theorem we show that this assignment is well-defined.
\end{definition}

\begin{theorem}\label{THM:CS-natural-transformation}
The assignments from definition \ref{DEF:CS-map} give a map of simplicial presheaves $\CS: \HVB \to \OM$, i.e., a natural transformation of functors $\CMan^{op}\to \lsSet$.
\end{theorem}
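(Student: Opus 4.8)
The plan is to unwind the target using the Dold--Kan description already recorded in Definition \ref{DEF:CS-map}: a $k$-simplex of $\OM(U) = \DKSet(\Ohol^\bu(U)\ul)$ is precisely a chain map $\phi\colon N(\Z\Delt{k}) \to \Ohol^\bu(U)\ul$, i.e.\ a labelling of every nondegenerate face $[i_0,\dots,i_\ell]$ of $\Delt{k}$ by an element of $\Ohol^\bu(U)\uu^{-\ell}$ compatible with the differentials. Since the differential on $\Ohol^\bu(U)\ul$ vanishes, being a chain map is equivalent to the single closedness requirement that, for every face $[i_0,\dots,i_\ell]$, the alternating sum of the labels of its codimension-one faces vanishes:
\[
\sum_{m=0}^{\ell}(-1)^m\,\phi\big(e_{i_0,\dots,\widehat{i_m},\dots,i_\ell}\big)=0 .
\]
Establishing this identity for the labels of \eqref{EQU:tr(f-1nablaf-nablag)} is the crux of well-definedness, and it is the step I expect to be the main obstacle.

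For that identity I would use two structural facts about the induced connections $\nabla_{q,p}$ on $Hom(E_p,E_q)$: the graded Leibniz rule $\nabla_{q,p}(\tilde f_{q,r}\circ\tilde f_{r,p})=\nabla_{q,r}(\tilde f_{q,r})\circ\tilde f_{r,p}+\tilde f_{q,r}\circ\nabla_{r,p}(\tilde f_{r,p})$ for $p<r<q$ (which follows directly from $\nabla_{q,p}(\phi)=\nabla_{E_q}\circ\phi-\phi\circ\nabla_{E_p}$), together with the cocycle relation $\tilde f_{q,p}=\tilde f_{q,r}\circ\tilde f_{r,p}$ and cyclic invariance of the trace. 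Writing $\omega_m:=\nabla_{i_m,i_{m-1}}(\tilde f_{i_m,i_{m-1}})$ and $F:=\tilde f_{i_\ell,i_0}$, the two extreme faces (dropping $i_0$ or $i_\ell$) contribute $\tr(\tilde f_{i_\ell,i_1}^{-1}\omega_\ell\cdots\omega_2)$ and $\pm\tr(\tilde f_{i_{\ell-1},i_0}^{-1}\omega_{\ell-1}\cdots\omega_1)$, while each inner face (dropping $i_m$, $0<m<\ell$) replaces the consecutive factors $\omega_{m+1}\omega_m$ by $\nabla(\tilde f_{i_{m+1},i_{m-1}})$, which the Leibniz rule splits into a ``left'' piece $\omega_{m+1}\tilde f_{i_m,i_{m-1}}$ and a ``right'' piece $\tilde f_{i_{m+1},i_m}\omega_m$. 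The right piece of the $m$-th inner face cancels the left piece of the $(m{+}1)$-st verbatim, while the left piece of the first inner face and the right piece of the last inner face each collapse against an extreme face after factoring $F^{-1}$ through the cocycle relation and applying cyclicity of the trace; I have checked the cases $\ell=1,2$ explicitly and the telescoping cancellation is uniform in $\ell$. (For $\ell=1$ this reduces to $\dim(E_{i_1})=\dim(E_{i_0})$, which holds since the morphisms are isomorphisms.)

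With well-definedness in hand, I would verify that $\{\CS(U)_k\}_k$ is a map of simplicial sets by checking compatibility with faces and degeneracies. Because the label of $[i_0,\dots,i_\ell]$ depends only on the composed transition functions $\tilde f_{i_m,i_{m-1}}$ of the sub-simplex, the inner face maps (which compose adjacent isomorphisms) and the outer face maps (which drop an endpoint) preserve all relevant composites by associativity, so face-compatibility is immediate; for a degeneracy $s_j$, the inserted identity forces any face containing the consecutive pair $\{j,j+1\}$ to carry the factor $\nabla_{j,j}(\mathrm{id})=0$, matching the normalisation built into $N(\Z\Delt{\bu})$ and hence the simplicial identities of $\DKSet$. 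Finally, naturality in $U$ is routine: for a holomorphic map $\varphi\colon U\to U'$, pullback is an algebra homomorphism on holomorphic forms, intertwines the pullback connections with $\nabla_{q,p}$, and commutes with trace, dimension, and composition, so $\OM(\varphi)\circ\CS(U')=\CS(U)\circ\HVB(\varphi)$. The entire weight of the proof thus sits in the closedness identity of the second paragraph.
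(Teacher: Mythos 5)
Your proposal is correct and follows essentially the same route as the paper: the paper's proof likewise reduces everything to the vanishing of the alternating sum of codimension-one face labels, established via the Leibniz rule for the induced connections, the cocycle identities $\tilde f_{i_\ell,i_1}^{-1}=\tilde f_{i_1,i_0}\circ\tilde f_{i_\ell,i_0}^{-1}$ and $\tilde f_{i_{\ell-1},i_0}^{-1}=\tilde f_{i_\ell,i_0}^{-1}\circ\tilde f_{i_\ell,i_{\ell-1}}$, and cyclicity of the trace, with naturality following from compatibility with pullbacks. Your telescoping cancellation is exactly the computation the paper carries out, and your explicit check of face/degeneracy compatibility is a point the paper leaves implicit.
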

\begin{proof}
First, we show that the assignment defined by \eqref{EQU:tr(f-1nablaf-nablag)} is well-defined, i.e., it indeed gives a chain map $N(\Z\Delt{k})\to \Ohol^\bu(U)\ul$. The differential in $N(\Z\Delt{k})$ is $d(e_{i_0,\dots,i_\ell})=\sum_{j=0}^\ell (-1)^j e_{i_0,\dots,\widehat{i_j},\dots, i_\ell}$, while the differential in $\Ohol^\bu(U)\ul$ vanishes, $d=0$, by our choice of taking the zero differential in $\Ohol^\bu(U)$, cf. definition \ref{DEF:OM}. We thus have to show that the images of $\sum_{j=0}^\ell (-1)^j e_{i_0,\dots,\widehat{i_j},\dots, i_\ell}$ also vanish. This image is given by
\begin{multline*}
u^{\ell-1}\cdot \Big[  \tr\Big(\tilde f_{i_{\ell},i_{1}} ^{-1}  
 \circ\nabla_{i_{\ell},i_{\ell-1}}(\tilde f_{i_{\ell},i_{\ell-1}})  \circ\dots\circ \nabla_{i_2,i_1}(\tilde f_{i_2,i_1})\Big)
 \\
 +\sum_{j=1}^{\ell-1}(-1)^j\cdot  \tr\Big(\tilde f_{i_{\ell},i_{0}} ^{-1}  
 \circ\nabla_{i_{\ell},i_{\ell-1}}(\tilde f_{i_{\ell},i_{\ell-1}})  \circ\dots\circ\nabla_{i_{j+1},i_{j-1}}(\tilde f_{i_{j+1},i_{j-1}})\circ\dots\circ\nabla_{i_1,i_0}(\tilde f_{i_1,i_0})\Big)
\\
 +(-1)^\ell \cdot \tr\Big(\tilde f_{i_{\ell-1},i_{0}} ^{-1}  
 \circ\nabla_{i_{\ell-1},i_{\ell-2}}(\tilde f_{i_{\ell-1},i_{\ell-2}})  \circ\dots\circ \nabla_{i_1,i_0}(\tilde f_{i_1,i_0})\Big)\Big].
\end{multline*}
Using the Leibniz property $\nabla_{i_{j+1},i_{j-1}}(\tilde f_{i_{j+1}, i_{j-1}})= \nabla_{i_{j+1},i_{j-1}}(\tilde f_{i_{j+1},i_{j}}\circ \tilde f_{i_{j},i_{j-1}})= \nabla_{i_{j+1},i_{j}}(\tilde f_{i_{j+1},i_{j}})\circ \tilde f_{i_{j},i_{j-1}}+ \tilde f_{i_{j+1},i_{j}}\circ \nabla_{i_{j},i_{j-1}}(\tilde f_{i_{j},i_{j-1}})$, together with $\tilde f_{i_\ell,i_1}^{-1}=\tilde f_{i_1,i_0}\circ \tilde f_{i_\ell,i_0}^{-1}$ and $\tilde f_{i_{\ell-1},i_{0}} ^{-1}=\tilde f_{i_{\ell},i_{0}} ^{-1}\circ \tilde f_{i_{\ell},i_{\ell-1}}$, and the cyclic property of the trace, shows, that the above terms indeed vanish.

Finally, we note that $\CS$ is a map of simplicial presheaves, i.e., a natural transformation. For a morphism $\varphi:U\to U'$ the induced simplicial sets are all given by pullback via $\varphi$, and equation \eqref{EQU:tr(f-1nablaf-nablag)} respects pullbacks.
\end{proof}

\section{Chern character induced via totalization}\label{SEC:Totalization}

Not every holomorphic vector bundle $E\to M$ over a complex manifold $M$ admits a holomorphic connection, and thus will not be an object in the category $\HVBnabla(M)$ of holomorphic vector bundles with holomorphic connections. However, we can cover the underlying complex manifold by open sets such that each restriction of the bundle to an open set has a holomorphic connection. By taking the limit of such a cover, we obtain a Chern character map associated to $E$. In fact, when taking limits, the holomorphic Chern character as defined by Atiyah can be recovered as the $0$-simplex part of this Chern character map, while higher simplices naturally yield higher Chern-Simons forms.

\subsection{Totalization of $\CS$}

We begin by describing the category of covers $\Cov_M$ of a complex manifold $M\in \Ob(\CMan)$.

\begin{definition}\label{DEF:Cov+NU}
Let $M\in \Ob(\CMan)$ be a complex manifold, and denote by $\Open_M:=\{U\subset M: U\text{ is open}\}$ the set of all open subsets of $M$. By definition, an (open) cover $\mc U$ of $M$ consists of an index set $I$, and a map $\alpha:I\to \Open_M$ such that $\bigcup_{i\in I}\alpha(i)=M$. We also write this as $\mc U=\{U_i\}_{i\in I}$ for $U_i=\al(i)$. Next, we make the covers of $M$ into a category $\Cov_M$ by letting the objects of $\Cov_M$ consists of covers of $M$, while a morphism $\Cov_M(\mc U, \mc U')$ consists of a map $f:I\to I'$ such that $\al=\al'\circ f$,
\[
\begin{tikzpicture}[scale=0.5]
\node (E0) at (0,2) {$I$}; 
\node (E1) at (0,0) {$I'$};
\node (E2) at (5,1) {$\Open_M$}; 
\draw [->] (E0) -- node[left] {$f$} (E1);
\draw [->] (E1) -- node[below] {$\al'$} (E2);
\draw [->] (E0) -- node[above] {$\al$} (E2);
\end{tikzpicture}
\]

There is a functor $\CN:\Cov_M\to \CMan^{\Del^{op}}$ called the \v{C}ech nerve of a cover, which we define now. Let $\mc U=\{U_i\}_{i\in I}$ be a cover, and denote by $U_{i_0,\dots, i_k}:=U_{i_0}\cap\dots \cap U_{i_k}$. Then, define the simplicial manifold $\CN\mc U$ by setting the $k$-simplices of $\CN\mc U$ to be the disjoint union of the $k$-fold intersections, i.e., $\CN\mc U_k:=\coprod_{i_0,\dots, i_k\in I} U_{i_0,\dots, i_k}$. Then $\CN\mc U:\Del^{op}\to \CMan$ is a simplicial complex manifold with face maps $d_j:\coprod_{i_0,\dots, i_k\in I} U_{i_0,\dots, i_k}\to \coprod_{i'_0,\dots, i'_{k-1}\in I} U_{i'_0,\dots, i'_{k-1}}$ induced by the inclusions $U_{i_0,\dots, i_k}\stackrel {inc}\hookrightarrow U_{i_0,\dots, \widehat{i_j},\dots, i_k}$ and degeneracies $s_j:\coprod_{i_0,\dots, i_k\in I} U_{i_0,\dots, i_k}\to \coprod_{i'_0,\dots, i'_{k+1}\in I} U_{i'_0,\dots, i'_{k+1}}$ induced by the identity maps $U_{i_0,\dots, i_k}\stackrel {id} \rightarrow  U_{i_0,\dots, i_j,i_j,\dots, i_k}$. Indeed, all the simplicial identities follow by a direct check. Below, we will slightly abuse notation by considering $\CN \mc U$ both as $\CN\mc U:\Del^{op}\to \CMan$ and $\CN\mc U:\Del\to \CMan^{op}$.
\end{definition}

\begin{proposition}\label{PROP:cosimpl-simpl-set}
Let $M\in \Ob( \CMan)$ be a complex manifold, and let $\mc U=\{U_i\}_{i\in I}$ be an open cover of $M$. Composing $\CN\mc U:\Del\to \CMan^{op}$ with $\HVB$ yields a cosimplicial simplicial set $\HVB(\mc\CN\mc U):=\HVB\circ \mc\CN\mc U:\Del\to \CMan^{op}\to \lsSet$. Similarly, composing $\CN\mc U:\Del\to \CMan^{op}$ with $\OM$ yields a cosimplicial simplicial set $\OM(\mc\CN\mc U):=\OM\circ \mc \CN\mc U:\Del\to \CMan^{op}\to \sSet$. Furthermore,  composing $\CS$ with $\CN(\mc U)$ yields a map $\CS(\CN\mc U):\HVB(\CN\mc U)\to \OM(\CN\mc U)$ of cosimplicial simplicial sets.
\end{proposition}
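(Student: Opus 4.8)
The plan is to recognize this statement as a purely formal consequence of functoriality, requiring no genuine computation beyond what has already been established. The essential point is the standard identification: a \emph{cosimplicial object} in a category $\mc C$ is precisely a functor $\Del\to\mc C$, so to exhibit a cosimplicial simplicial set it suffices to produce a functor $\Del\to\lsSet$ (respectively $\Del\to\sSet$), and to exhibit a map of cosimplicial simplicial sets it suffices to produce a natural transformation between two such functors.

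First, for the two statements about objects, I would simply observe that $\HVB(\CN\mc U)$ and $\OM(\CN\mc U)$ are composites of functors. By Definition~\ref{DEF:Cov+NU}, where the simplicial identities are verified directly, $\CN\mc U:\Del\to\CMan^{op}$ is a functor; and by their constructions $\HVB$ and $\OM$ are functors $\CMan^{op}\to\lsSet$ and $\CMan^{op}\to\sSet$ respectively. Hence the composites $\HVB\circ\CN\mc U:\Del\to\lsSet$ and $\OM\circ\CN\mc U:\Del\to\sSet$ are again functors, i.e.\ cosimplicial simplicial sets, which is exactly what the first two assertions claim.

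For the map, I would use whiskering, that is, the horizontal composition of the natural transformation $\CS$ with the functor $\CN\mc U$. By Theorem~\ref{THM:CS-natural-transformation}, $\CS:\HVB\Rightarrow\OM$ is a natural transformation. Its whiskered composite with $\CN\mc U$ has, at each object $[p]\in\Del$, the component $\CS(\CN\mc U_p):\HVB(\CN\mc U_p)\to\OM(\CN\mc U_p)$, where $\CN\mc U_p=\coprod U_{i_0,\dots,i_p}$ is a complex manifold. To be a map of cosimplicial simplicial sets means that, for every morphism $\theta:[p]\to[q]$ in $\Del$, the square relating these components along the cosimplicial structure maps $\HVB(\CN\mc U(\theta))$ and $\OM(\CN\mc U(\theta))$ commutes. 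But this square is precisely the naturality square of $\CS$ evaluated at the morphism $\CN\mc U(\theta)$ in $\CMan^{op}$, and thus commutes by Theorem~\ref{THM:CS-natural-transformation}.

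There is essentially no hard part here: the only input that requires checking is that $\CN\mc U$ is genuinely a functor, and that is exactly the direct verification of the simplicial identities already carried out in Definition~\ref{DEF:Cov+NU}. The single point worth flagging is the variance bookkeeping — $\CN\mc U$ is regarded as $\Del\to\CMan^{op}$, so that the cofaces and codegeneracies in the cosimplicial direction come from the inclusions $U_{i_0,\dots,i_k}\hookrightarrow U_{i_0,\dots,\widehat{i_j},\dots,i_k}$ and the identity maps read contravariantly — but once this convention is fixed, every required commutativity is a direct instance of the naturality already proved, so the proposition follows.
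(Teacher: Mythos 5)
Your proposal is correct and takes essentially the same route as the paper: the paper's proof is exactly the one-line observation that whiskering the natural transformation $\CS$ (from Theorem~\ref{THM:CS-natural-transformation}) with the functor $\CN\mc U:\Del\to\CMan^{op}$ yields a natural transformation of functors $\Del\to\lsSet$. Your additional remarks on the variance bookkeeping and on the naturality squares simply spell out what the paper leaves implicit.
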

\begin{proof}
This follows from $\CS:\HVB\to \OM$ being a natural transformation by theorem \ref{THM:CS-natural-transformation} composed with $\CN\mc U$, resulting in $\CS\circ \CN\mc U:\HVB\circ \CN\mc U\to \OM\circ \CN\mc U$, which is a natural transformation of functors $\Del\to \lsSet$.
\end{proof}

By proposition \ref{PROP:cosimpl-simpl-set}, both $\HVB(\CN\mc U)$ and $\OM(\CN\mc U)$ are cosimplicial simplicial sets. We may thus apply the totalization. The relevant definitions for the totalization can be found in appendix \ref{APPX:totalization}.

\begin{corollary}
Given a complex manifold $M$ with a cover $\mc U=\{U_i\}_{i\in I}$, the map $\CS(\CN\mc U):\HVB(\CN\mc U)\to \OM(\CN\mc U)$ is a map of cosimplicial simplicial sets, and thus induces a simplicial set map on the totalization, i.e., a map
\begin{equation}\label{EQU:TOT(CS)}
\Tot(\CS(\CN\mc U)):\Tot(\HVB(\CN\mc U))\to \Tot(\OM(\CN\mc U)).
\end{equation}
\end{corollary}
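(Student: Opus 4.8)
The plan is to recognize that this statement is, after Proposition \ref{PROP:cosimpl-simpl-set}, essentially formal: it asserts that a morphism of cosimplicial simplicial sets induces a morphism on totalizations, which is nothing more than the functoriality of $\Tot$. So the real content of the write-up is to spell out precisely what ``induces a map on the totalization'' means and to confirm that nothing beyond naturality is needed.

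First I would recall from Proposition \ref{PROP:cosimpl-simpl-set} that $\CS(\CN\mc U)$ is already a morphism of cosimplicial simplicial sets, i.e., a natural transformation $\HVB(\CN\mc U)\to \OM(\CN\mc U)$ of functors $\Del\to \lsSet$. Next I would recall the description of totalization from appendix \ref{APPX:totalization}: for a cosimplicial simplicial set $X\colon\Del\to\sSet$, the totalization $\Tot(X)$ is the end $\int_{[n]\in\Del}(X^n)^{\Delta^n}$, equivalently the simplicial mapping space $\mathrm{Hom}_{\sSet^\Del}(\Delta^\bullet,X)$ out of the cosimplicial standard simplex $\Delta^\bullet$. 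The essential observation is that this assignment is functorial in $X$: given a natural transformation $\eta\colon X\to Y$, post-composition with the levelwise maps $\eta^n\colon X^n\to Y^n$ commutes with the cosimplicial structure maps that cut out the end, and hence descends to a map $\Tot(\eta)\colon\Tot(X)\to\Tot(Y)$ of simplicial sets.

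Applying this functoriality to $\eta=\CS(\CN\mc U)$ then produces exactly the desired map $\Tot(\CS(\CN\mc U))\colon\Tot(\HVB(\CN\mc U))\to\Tot(\OM(\CN\mc U))$ of \eqref{EQU:TOT(CS)}, and its naturality properties (compatibility with the structure maps of the totalizations) are inherited directly from the naturality of $\eta$, so no separate verification is required.

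The one point genuinely deserving care---and what I would flag as the main obstacle, mild though it is---is the size mismatch: $\HVB$ takes values in the large simplicial sets $\lsSet$, whereas $\OM$ takes values in $\sSet$. One must therefore check that the end defining $\Tot$ is well-behaved at the large level and that $\Tot(\CS(\CN\mc U))$ is a bona fide map of (possibly large) simplicial sets. This is resolved by noting that the end formula for $\Tot$ reads identically in both settings and that each $\eta^n$ is a levelwise map of simplicial sets; everything else is the formal naturality already established in Proposition \ref{PROP:cosimpl-simpl-set}.
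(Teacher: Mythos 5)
Your proposal is correct and matches the paper's (implicit) argument: the corollary is an immediate consequence of Proposition \ref{PROP:cosimpl-simpl-set} together with the functoriality of $\Tot$, which the paper establishes in Definition \ref{DEF:Totalization} via the universal property of the equalizer defining $\Tot$ --- the same mechanism as your end/mapping-space description. Your remark on the size issue is a reasonable point of care but introduces no new content beyond what the paper already handles by working in $\lsSet$.
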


Applying the totalization to the cosimplicial simplicial set $\HVB(\CN\mc U)$ gives, by definition, a simplicial set $\Tot(\HVB(\CN\mc U))$. The $0$-simplices of this simplicial set are given by arbitrary holomorphic vector bundles $E$ on $U_i$ together with choices of local holomorphic connections on each open set $U_i$ of the cover $\mc U$, as stated more precisely in the next proposition.
\begin{proposition}\label{PROP:Tot(Hol(U))0}
Let $\mc U=\{U_i\}_{i\in I}$ be an open cover of a complex manifold $M$. Then, the $0$-simplices of  $\Tot(\HVB(\CN\mc U))$ are given by a choice of holomorphic bundles $E_i\to U_i$ with holomorphic connections $\nabla_i$, and holomorphic bundle isomorphisms $g_{i,j}:E_j|_{U_{i,j}}\to E_i|_{U_{i,j}}$ (not necessarily respecting the connections) satisfying the cocycle condition $g_{i,j}|_{U_{i,j,k}}\circ g_{j,k}|_{U_{i,j,k}}=g_{i,k}|_{U_{i,j,k}}$ on $U_{i,j,k}$, as well as $g_{i,i}=id_{E_i}$.
\end{proposition}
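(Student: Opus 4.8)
The plan is to compute $\Tot(\HVB(\CN\mc U))_0$ directly from the end (equivalently, cosimplicial-mapping-space) description of the totalization recalled in appendix~\ref{APPX:totalization}. Writing $X=\HVB(\CN\mc U)$ for the cosimplicial simplicial set, a $0$-simplex of $\Tot(X)$ is a map of cosimplicial simplicial sets $\Delt{\bu}\to X$, which by the Yoneda lemma is the same as a family $\{\xi_n\}_{n\geq 0}$ with $\xi_n\in (X^n)_n=\Nerve(\HVBnabla(\CN\mc U_n))_n$ subject to one compatibility for every coface and every codegeneracy of $\Del$. Concretely, for each coface $\delta^j$ the family must satisfy $d_j\xi_n=X(\delta^j)(\xi_{n-1})$, and for each codegeneracy $\sigma^j$ it must satisfy $X(\sigma^j)(\xi_{n+1})=s_j\xi_n$, where $d_j,s_j$ are the simplicial operators of the nerve $X^n$ and $X(\delta^j),X(\sigma^j)$ are the cosimplicial operators, i.e.\ the pullbacks along the inclusions and diagonals of definition~\ref{DEF:Cov+NU}. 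First I would rewrite each $\xi_n$ using $\CN\mc U_n=\coprod_{i_0,\dots,i_n}U_{i_0,\dots,i_n}$, so that $\xi_n$ is a collection, indexed by tuples $(i_0,\dots,i_n)$, of strings of $n$ composable isomorphisms of holomorphic bundles-with-connection on $U_{i_0,\dots,i_n}$.

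Next I would read off the low-degree data. In degree $0$, $\xi_0$ is simply an object of $\HVBnabla(\coprod_i U_i)$, i.e.\ a choice of $(E_i,\nabla_i)$ on each $U_i$. In degree $1$, $\xi_1$ is a single morphism, i.e.\ a family of isomorphisms $g_{i,j}$ on each $U_{i,j}$; the two coface compatibilities $d_0\xi_1=X(\delta^0)(\xi_0)$ and $d_1\xi_1=X(\delta^1)(\xi_0)$ identify the source and target of $g_{i,j}$ with the restrictions of the $\xi_0$-objects along the face inclusions $U_{i,j}\hookrightarrow U_{j}$ and $U_{i,j}\hookrightarrow U_{i}$, so that $g_{i,j}$ is exactly an isomorphism between $E_j|_{U_{i,j}}$ and $E_i|_{U_{i,j}}$ (the precise matching being dictated by the face conventions of $\CN\mc U$), not required to intertwine the restricted connections. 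The codegeneracy compatibility $X(\sigma^0)(\xi_1)=s_0\xi_0$, evaluated on the diagonal $U_{i,i}=U_i$, forces $g_{i,i}=\mathrm{id}_{E_i}$, since $s_0\xi_0$ is the degenerate edge, i.e.\ the identity on $\xi_0$.

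I would then obtain the cocycle condition from degree $2$. On each $U_{i,j,k}$ the outer faces of the $2$-simplex $\xi_2$ are, by the coface compatibilities, the restrictions of $g_{i,j}$ and $g_{j,k}$, while the middle face $d_1\xi_2$ is the \emph{composite} edge of the nerve; hence the relation $d_1\xi_2=X(\delta^1)(\xi_1)$ becomes, after restriction to $U_{i,j,k}$, precisely the cocycle condition of the proposition. It then remains to run the argument backwards and to check that nothing further is imposed in degrees $\geq 3$: given $(\{(E_i,\nabla_i)\},\{g_{i,j}\})$ satisfying the cocycle and unit conditions, the objects at the vertices of each $\xi_n$ are forced by the coface relations to be the restrictions of the $E_{i_a}$ (with their restricted connections), and each edge of $\xi_n$ is forced to be a restriction of some $g_{i_a,i_b}$. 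Because the nerve of a groupoid satisfies the Segal condition, an $n$-simplex is uniquely determined by, and reconstructed from, its spine of $n$ edges whose sources and targets match; the matching of sources and targets, together with the uniqueness of composites, is guaranteed exactly by the cocycle and unit conditions, so the whole family $\{\xi_n\}$ is reconstructed uniquely. This yields the claimed bijection.

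The step I expect to be the main obstacle is this last one: verifying that the infinitely many higher coface and codegeneracy compatibilities in degrees $\geq 3$ add no new relations beyond the cocycle and unit conditions. The clean way to dispatch it is to invoke the Segal/coskeletality of the nerve of a groupoid, so that a compatible family is determined by its restriction to the $\le 2$-truncation, on which the conditions read as ``objects, isomorphisms between restricted objects, and associativity/unitality of composition''; these translate verbatim into the data of the proposition. I would also take care to spell out the face and degeneracy inclusions of $\CN\mc U$ when computing $X(\delta^j)$ and $X(\sigma^j)$, since it is exactly these direction conventions that fix the source/target order of $g_{i,j}$ and the precise form of the cocycle identity.
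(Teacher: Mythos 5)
Your proposal is correct and follows essentially the same route as the paper's proof: both identify a $0$-simplex of the totalization with a compatible family $\xi_\ell\in\Nerve(\HVBnabla(\CN\mc U_\ell))_\ell$, extract the bundles and transition functions from the compatibilities with $[0]\to[\ell]$ and $[1]\to[\ell]$, and obtain the cocycle condition from the $2$-simplices of the nerve. Your appeal to the Segal condition for the nerve of a groupoid is a cleaner justification of the final step (that degrees $\geq 3$ impose nothing new), which the paper handles by inspection of the higher cocycle diagrams; you also make explicit the codegeneracy argument for $g_{i,i}=\mathrm{id}_{E_i}$, which the paper leaves implicit.
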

\begin{proof}
First note that, by definition, $\Tot(\HVB(\CN\mc U))$ is a simplicial set, which is determined by a product $\prod_{[\ell]\in \Ob(\Del)} (\HVB(\CN\mc U_\ell))^{\Delt{\ell}}$ of simplicial sets, whose $k$-simplices consist of simplicial set maps $\prod_{[\ell]\in \Ob(\Del)} {\lsSet}(\Delt{\ell}\times \Delt{k},\HVB(\CN\mc U_\ell))$. Thus, a $0$-simplex is given by a sequence of simplicial set maps $\Delt{\ell}\times \Delt{0}\to \HVB(\CN\mc U_\ell)$ for $\ell=0,1,2,\dots$. Since each such map is determined by its image on the unique maximal non-degenerate $\ell$-simplex, this amounts to a sequence of elements in $\HVB(\CN\mc U_\ell)_\ell$, i.e., a holomorphic vector bundle $E$ with holomorphic connection on $\CN\mc U_0=\coprod_i U_i$, two holomorphic vector bundles $\tilde E_0, \tilde E_1$ with holomorphic connection over $\CN\mc U_1=\coprod_{i_0,i_1}U_{i_0,i_1}$ and a morphism ${\tilde f}:\tilde E_0\to \tilde E_1$ not respecting the connections, three holomorphic vector bundles $\tilde {\tilde {E_0}}, \tilde {\tilde {E_1}}, \tilde {\tilde {E_2}}$ with holomorphic connection over $\CN\mc U_2=\coprod_{i_0,i_1,i_2}U_{i_0,i_1,i_2}$ and morphisms $\tilde {\tilde {E_0}}\stackrel {\tilde{ \tilde {f_0}}} \to \tilde {\tilde {E_1}}\stackrel {\tilde {\tilde {f_1}}} \to \tilde {\tilde {E_2}}$  not respecting the connections, etc. However, in the totalization, this data is not independent.

First, consider $\varphi:[0]\to [1], \varphi(0)=r$, where $r$ is $0$ or $1$. Use $\phi$ from \eqref{DEF:Tot} to map the $\ell=1$ component $\tilde E_0\stackrel {\tilde f} \to \tilde E_1$ to the $\varphi:[0]\to [1]$ component, which gives the bundle $\tilde E_{r}$ interpreted as a simplicial set morphism $\Delt{0}\times \Delt{0}\to \HVB(\CN\mc U_1)$. On the other hand, using $\psi$ from \eqref{DEF:Tot} to map the $\ell=0$ component $E$ to the $\varphi:[0]\to [1]$ component gives $(\coprod_{i_0,i_1} U_{i_0,i_1}\stackrel {inc_r}\to \coprod_i U_i)^*(E)$, i.e., the pullback of $E$ under the inclusions $inc_r:U_{i_0,i_1}\hookrightarrow U_{i_r}$. Since these coincide in the equalizer, we see that $\tilde E_r$ is just the pullback of $E$ under the inclusion $inc_r$. Writing $E=\coprod_i E_i$ over $\coprod_i U_i$, we see that $\tilde E_0=\coprod_{i_0,i_1} E_{i_0}|_{U_{i_0,i_1}}$ and $\tilde E_1=\coprod_{i_0,i_1} E_{i_1}|_{U_{i_0,i_1}}$. Similar arguments show that all higher $\tilde{\tilde{E_i}},\tilde{\tilde{\tilde{E_i}}}\dots$, are pullbacks of $E$ under inclusions mapping $U_{i_0,\dots i_m}\hookrightarrow U_{i_r}$, obtained by considering the component $\rho:[0]\to [m], 0\mapsto r$.

Next, considering components $\rho:[1]\to [m], \rho(0)=r, \rho(1)=s$, for some $0\leq r\leq s\leq m$, shows that all morphisms $\tilde{\tilde {f_j}},\dots$, are induced by pullbacks of ${\tilde f}:\tilde E_0\to \tilde E_1$ via inclusions. In particular, if we write the map ${\tilde f}:\coprod_{i_0,i_1}E_{i_0}|_{U_{i_0,i_1}}\to \coprod_{i_0,i_1}E_{i_1}|_{U_{i_0,i_1}}$ in $(i_0,i_1)$-components as $\tilde f=\coprod_{i_0,i_1} g_{i_1,i_0}$, where $g_{i_1,i_0}:E_{i_0}|_{U_{i_0,i_1}}\to E_{i_1}|_{U_{i_0,i_1}}$, then the $2$-simplex $\tilde {\tilde {E_0}}\stackrel {\tilde{ \tilde {f_0}}} \to \tilde {\tilde {E_1}}\stackrel {\tilde {\tilde {f_1}}} \to \tilde {\tilde {E_2}}$ on $\mc U_2$ from above is given by ${\tilde{ \tilde {f_0}}} =\coprod_{i_0,i_1,i_2}g_{i_1,i_0}|_{U_{i_0,i_1,i_2}}:\coprod_{i_0,i_1,i_2} E_{i_0}|_{U_{i_0,i_1,i_2}}\to \coprod_{i_0,i_1,i_2} E_{i_1}|_{U_{i_0,i_1,i_2}}$ and ${\tilde{ \tilde {f_1}}} =\coprod_{i_0,i_1,i_2}g_{i_2,i_1}|_{U_{i_0,i_1,i_2}}:\coprod_{i_0,i_1,i_2} E_{i_1}|_{U_{i_0,i_1,i_2}}\to \coprod_{i_0,i_1,i_2} E_{i_2}|_{U_{i_0,i_1,i_2}}$, while the composition ${\tilde{ \tilde {f_1}}} \circ {\tilde{ \tilde {f_0}}} =\coprod_{i_0,i_1,i_2}g_{i_2,i_0}|_{U_{i_0,i_1,i_2}}:\coprod_{i_0,i_1,i_2} E_{i_0}|_{U_{i_0,i_1,i_2}} \to \coprod_{i_0,i_1,i_2} E_{i_2}|_{U_{i_0,i_1,i_2}}$. Therefore, the functions $\{g_{i,j}\}_{i,j}$ satisfy the cocycle condition $g_{i_2,i_1}\circ g_{i_1,i_0}=g_{i_2,i_0}$ on triple intersections $U_{i_0,i_1,i_2}$, and we may thus interpret the $\{g_{i,j}\}_{i,j\in I}$ as transition functions for a global holomorphic vector bundle on $M$, so that on the cover $\mc U$ we have locally chosen holomorphic connections.

Finally, we note that there are no further higher conditions, since the higher restrictions on the $\ell$-simplices in $\HVB(\CN\mc U_\ell)_\ell$ coming from \eqref{DEF:Tot} are implied by the cocycle condition; see figure \ref{fig:E0-bundle}.
\begin{figure}[h]
\[
\scalebox{.95}{
\begin{tikzpicture}[scale=1]
\node (E0) at (1.5,1.5) {$E_i$}; 

\node (E0') at (3,1.5) {$E_j|_{U_{i,j}}$};
\node (E1') at (5,1.5) {$E_i|_{U_{i,j}}$}; 
\draw [->] (E0') -- node[above] {$g_{i,j}$} (E1');

\node (E0'') at (7,0.5) {$E_k|_{U_{i,j,k}}$};
\node (E1'') at (8,2) {$E_j|_{U_{i,j,k}}$}; 
\node (E2'') at (9,0.5) {$E_i|_{U_{i,j,k}}$}; 
\draw [->] (E0'') -- node[left] {$g_{j,k}|_{U_{i,j,k}}$} (E1'');
\draw [->] (E1'') -- node[right] {$g_{i,j}|_{U_{i,j,k}}$} (E2'');
\draw [->] (E0'') -- node[below] {$g_{i,k}|_{U_{i,j,k}}$} (E2'');

\node (E0''') at (11,0.5) {$E_\ell|_{U_{i,j,k,\ell}}$};
\node (E1''') at (12,2) {$E_k|_{U_{i,j,k,\ell}}$}; 
\node (E2''') at (14,0.5) {$E_j|_{U_{i,j,k,\ell}}$}; 
\node (E3''') at (15,2) {$E_i|_{U_{i,j,k,\ell}}$}; 
\draw [->] (E0''') -- node[left] {$g_{k,\ell}|_{\dots}$} (E1''');
\draw [->] (E0''') -- node[below] {$g_{j,\ell}|_{\dots}$} (E2''');
\draw [->] (E0''') -- node[above] {$g_{i,\ell}|_{\dots}$} (E3''');
\draw [->] (E1''') -- node[below] {$g_{j,k}|_{\dots}$} (E2''');
\draw [->] (E1''') -- node[above] {$g_{i,k}|_{\dots}$} (E3''');
\draw [->] (E2''') -- node[right] {$g_{i,j}|_{\dots}$} (E3''');

\node (F) at (16.5,1.5) {$\dots$};
\end{tikzpicture}
}
\]
\caption{Higher relations in $\Tot(\HVB(\CN \mc U))_0$ are induced by the cocycle condition} \label{fig:E0-bundle}
\end{figure}

This completes the proof.
\end{proof}

Although not all holomorphic vector bundles admit a holomorphic connection, this is certainly true locally.
\begin{lemma}\label{LEM:E-given-local}
If $\pi:E\to M$ is a holomorphic vector bundle over $M$, then there exists a cover $\mc U=\{U_i\}_{i\in I}$ of $M$ such that for each $i\in I$, the restriction $E|_{U_i}\to U_i$ has a holomorphic connection. In particular, each holomorphic vector bundle with such a choice of cover gives a $0$-simplex in $\Tot(\HVB(\CN\mc U))$.
\end{lemma}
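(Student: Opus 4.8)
The plan is to prove this in two stages: first establishing the local existence of holomorphic connections, and then assembling the resulting data into a $0$-simplex via Proposition \ref{PROP:Tot(Hol(U))0}.

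For the first stage, I would use the fact that any holomorphic vector bundle is locally trivial. That is, I choose a cover $\mc U = \{U_i\}_{i \in I}$ together with holomorphic trivializations $\psi_i : E|_{U_i} \xrightarrow{\sim} U_i \times \C^n$. On each trivialized piece $U_i \times \C^n$ the component-wise holomorphic exterior derivative $d$ furnishes a holomorphic connection, and transporting it back through $\psi_i$ gives a holomorphic connection $\nabla_i$ on $E|_{U_i}$. This already proves the first assertion: each $E|_{U_i}$ carries a holomorphic connection. (Note that the obstruction to a \emph{global} holomorphic connection, the Atiyah class, plays no role locally, since on each trivialized patch the trivial connection is available.)

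For the second stage, I would feed this data into Proposition \ref{PROP:Tot(Hol(U))0}, which characterizes a $0$-simplex of $\Tot(\HVB(\CN\mc U))$ as a choice of holomorphic bundles $E_i \to U_i$ with holomorphic connections $\nabla_i$, together with holomorphic isomorphisms $g_{i,j} : E_j|_{U_{i,j}} \to E_i|_{U_{i,j}}$ satisfying the cocycle condition and $g_{i,i} = \mathrm{id}$. Setting $E_i = E|_{U_i}$ with the connections $\nabla_i$ constructed above, and taking $g_{i,j} = \psi_i \circ \psi_j^{-1}$, restricted to $U_{i,j}$, to be the clutching functions of $E$, the cocycle identity $g_{i,j}\circ g_{j,k} = g_{i,k}$ holds automatically because these are precisely the transition functions of the single globally defined bundle $E$. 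Hence this assignment is exactly a $0$-simplex of $\Tot(\HVB(\CN\mc U))$, which establishes the second assertion.

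There is essentially no hard step here: local triviality of holomorphic bundles and holomorphy of the trivial connection are standard, and the cocycle identity is inherited for free from the global bundle $E$. The only care required is bookkeeping—ensuring the direction of $g_{i,j}$ (from $E_j$ to $E_i$) matches the convention of Proposition \ref{PROP:Tot(Hol(U))0}, and emphasizing that the local connections $\nabla_i$ are genuinely permitted to disagree on overlaps, which is exactly the feature of the totalization that allows a bundle with no global holomorphic connection to nonetheless define a $0$-simplex.
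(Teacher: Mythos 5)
Your proposal is correct and follows essentially the same route as the paper: local holomorphic trivializations, the trivial connection $\partial$ transported back through $\psi_i$, and then an appeal to Proposition \ref{PROP:Tot(Hol(U))0} for the $0$-simplex. The paper leaves the second stage as a one-line citation, whereas you spell out the transition functions and the cocycle condition explicitly, but there is no substantive difference in approach.
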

\begin{proof}
Choose a local trivialization $\{\psi_i: D_i\times \C^d\to E\}_{i\in I}$ via trivial holomorphic bundles $D_i\times \C^d\to D_i$, where $D_i\subset \C^m$ is an open disk. Then, the holomorphic connection $\partial=\sum_k dz_k \frac{\partial}{\partial z_k}$ on $D_i\times \C^d\to D_i$ can be transported to a holomorphic connection on $E|_{\image(\psi_i)}\to U_i:=\pi(E|_{\image(\psi_i)})$ via pullback by $\psi_i^{-1}$.

The last statement follows by proposition \ref{PROP:Tot(Hol(U))0}.
\end{proof}

The importance of the above map of simplicial sets is that for $0$-simplices we recover the Atiyah's Chern character, cf. \cite{A}.
\begin{proposition}
The map from \eqref{EQU:TOT(CS)} on $0$-simplices coincides with the Chern character of a holomorphic vector bundle as defined by O'Brian, Toledo, Tong in \cite{OTT1} applied to the strict case. More precisely, for a $0$-simplex given by the local data $(\{E_i\to U_i,\nabla_i\}_{i\in I},\{g_{i,j}\}_{i,j\in I})$ from proposition \ref{PROP:Tot(Hol(U))0}, $\Tot(\CS(\CN\mc U))_0$ maps this to the $0$-simplex in $\Tot(\OM(\CN\mc U))_0$, given by the following sequence of holomorphic forms on $\coprod_{i_0,\dots, i_\ell} U_{i_0,\dots, i_\ell}$ for $\ell\geq 0$:
\begin{equation}\label{EQU:Tot(CS(U))0}
\coprod_{i_0,\dots, i_\ell}
 \tr\Big((g_{i_{\ell},i_{\ell-1}}\circ\dots \circ g_{i_1,i_0})^{-1}  
 \circ\nabla_{i_{\ell},i_{\ell-1}}(g_{i_{\ell},i_{\ell-1}})  \circ\dots\circ\nabla_{i_1,i_0}(g_{i_1,i_0})\Big)\cdot u^\ell
\end{equation}
\end{proposition}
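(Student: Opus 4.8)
The plan is to unwind the definition of $\Tot(\CS(\CN\mc U))_0$ using the identification of $0$-simplices established in Proposition \ref{PROP:Tot(Hol(U))0}, and then to read off the value of $\CS$ on the top-dimensional cell of each $\ell$-simplex. Concretely, a $0$-simplex of $\Tot(\HVB(\CN\mc U))$ is a compatible family, indexed by $\ell\geq 0$, of $\ell$-simplices lying in $\HVB(\CN\mc U_\ell)_\ell$, whose $\ell$-component is the pullback data over $\CN\mc U_\ell=\coprod_{i_0,\dots,i_\ell} U_{i_0,\dots,i_\ell}$. By the analysis carried out in the proof of Proposition \ref{PROP:Tot(Hol(U))0}, on the $(i_0,\dots,i_\ell)$-component this $\ell$-simplex is exactly the composable sequence of transition functions
\[
E_{i_0} \xrightarrow{g_{i_1,i_0}} E_{i_1} \xrightarrow{g_{i_2,i_1}} \cdots \xrightarrow{g_{i_\ell,i_{\ell-1}}} E_{i_\ell},
\]
restricted to $U_{i_0,\dots,i_\ell}$, equipped with the locally chosen connections $\nabla_{i_p}$.

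Next I would apply $\CS(\CN\mc U_\ell)_\ell$ to this $\ell$-simplex. Dually to the $\HVB$ side, a $0$-simplex of $\Tot(\OM(\CN\mc U))$ is determined by the sequence of images on the maximal non-degenerate $\ell$-simplex, so the only datum to extract at level $\ell$ is the label of the top cell $e_{0,1,\dots,\ell}$ of $\Delt{\ell}$, which is a holomorphic form on $\CN\mc U_\ell$. Setting $f_p=g_{i_p,i_{p-1}}$ in Definition \ref{DEF:CS-map}, the composition $\tilde f_{q,p}=f_q\circ\cdots\circ f_{p+1}$ becomes $g_{i_q,i_{q-1}}\circ\cdots\circ g_{i_{p+1},i_p}$ (so that $\tilde f_{\ell,0}=g_{i_\ell,i_{\ell-1}}\circ\cdots\circ g_{i_1,i_0}$), and the induced connection $\nabla_{p,p-1}$ becomes $\nabla_{i_p,i_{p-1}}$. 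Substituting these into the top-cell formula \eqref{EQU:tr(f-1nablaf-nablag)} yields precisely the expression \eqref{EQU:Tot(CS(U))0}. The equalizer conditions built into the totalization then guarantee that the labels on lower-dimensional faces at level $\ell$ agree with the top-cell labels at lower levels, so that the full sequence of forms \eqref{EQU:Tot(CS(U))0} assembles into a single well-defined $0$-simplex of $\Tot(\OM(\CN\mc U))$; this is a routine verification using the structure maps $\phi,\psi$ of the totalization exactly as in the proof of Proposition \ref{PROP:Tot(Hol(U))0}.

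Finally, to establish that this coincides with the Chern character of O'Brian--Toledo--Tong, I would recall their local formula for the holomorphic Chern character as a cochain in the \v{C}ech--Hodge complex, built from the transition functions $g_{i,j}$ and the local connections via the same alternating products of $g^{-1}\nabla(g)$-type terms, and compare it term-by-term with \eqref{EQU:Tot(CS(U))0}. I expect the genuine content to lie here, while the bookkeeping of the first two steps is essentially forced. The subtleties I anticipate are purely a matter of reconciling conventions: index orderings along the simplex, the placement of $g^{-1}$ relative to $g$, and the sign and $u$-degree normalizations between the two formulations, together with spelling out that ``applied to the strict case'' means restricting the \cite{OTT1} construction from its homotopy-coherent form to a genuine strictly associative cocycle $\{g_{i,j}\}$, for which the higher coherence terms vanish and the cochain collapses to \eqref{EQU:Tot(CS(U))0}.
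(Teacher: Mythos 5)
Your proposal is correct and follows essentially the same route as the paper: identify the $\ell$-component of a $0$-simplex of $\Tot(\HVB(\CN\mc U))$ with the composable sequence of restricted transition functions via Proposition \ref{PROP:Tot(Hol(U))0}, then substitute $f_p=g_{i_p,i_{p-1}}$ into the top-cell formula \eqref{EQU:tr(f-1nablaf-nablag)} from Definition \ref{DEF:CS-map}. The paper's proof is exactly these two steps and treats the identification with the O'Brian--Toledo--Tong cochains as a direct comparison of formulae (cf.\ the remark citing \cite[p.~244]{OTT1}) rather than as an additional argument.
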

\begin{proof}
By the proof of proposition \ref{PROP:Tot(Hol(U))0}, the $0$-simplex of $\Tot(\HVB(\CN\mc U))$ is a sequence of $\ell$ composable morphisms
\[
\coprod_{i_0,\dots, i_\ell}E_{i_0}|_{U_{i_0,\dots, i_\ell}}
\stackrel {\coprod g_{i_1,i_0}} \longrightarrow
\coprod_{i_0,\dots, i_\ell}E_{i_1}|_{U_{i_0,\dots, i_\ell}}
\stackrel {\coprod g_{i_2,i_1}} \longrightarrow
\dots
\stackrel {\coprod g_{i_\ell,i_{\ell-1}}} \longrightarrow
\coprod_{i_0,\dots, i_\ell}E_{i_\ell}|_{U_{i_0,\dots, i_\ell}},
\]
for $\ell\geq 0$, which do not (necessarily) respect the connections $\nabla_i$ restricted to $U_{i_0,\dots, i_\ell}$. By definition \ref{DEF:CS-map}, and, in particular, equation \eqref{EQU:tr(f-1nablaf-nablag)}, $\CS$ maps this to \eqref{EQU:Tot(CS(U))0} on the top non-degenerate $\ell$-simplex of $\Tot(\OM(\CN\mc U))_0$.
\end{proof}

\begin{remark}
Note that the map \eqref{EQU:TOT(CS)} is producing not only the Chern character via the Atiyah class on the $0$-simplices of $\Tot(\HVB(\CN\mc U))$, but a host of Chern-Simons type invariants for holomorphic bundles on the higher simplices. We will revisit these invariants in a future paper.
\end{remark}

\subsection{Totalization of cosimplicial non-positively and non-negatively graded complexes}

There is an even more explicit relationship between the formulae in \eqref{EQU:Tot(CS(U))0} and the map constructed in \cite{OTT1}. To see this we will interpret the $0$-simplices of $\Tot(\OM(\CN\mc U))$ as a \v{C}ech complex with values in holomorphic forms.  We first need to make some general statements about the \v{C}ech cochain complex.

\begin{definition}\label{DEF:Presheaf-Cech}
Let $\mc A$ be a presheaf of non-negatively graded cochain complexes on a manifold $M$, and let $\mc U=\{U_i\}_{i\in I}$ be an open cover of $M$.  We write $\mathcal A_{i_0, \cdots, i_n} =\mathcal A(U_{i_0, \cdots, i_n})$ and $\mathcal A^k_{i_0, \cdots, i_n} =\mathcal A^k(U_{i_0, \cdots, i_n})$ for the degree $k$ component, and write $d_{\mc A}$ for the internal differential of $\mc A$. From this data, there are two ways to obtain a cochain complex.

First, we define the \v{C}ech cochain complex $\vC^\bu(\mc U, \mc A)$ of $\mc A$ for the cover $\mc U$ by setting
\[
\vC^n(\mc U, \mc A)= \prod_{i_0, \cdots, i_n\in I} \mathcal A_{i_0, \cdots, i_n},
\]
where an element $a\in \mc A^k_{i_0,\dots, i_n}$ is of total degree $|a|=n+k$. The \v{C}ech differential $\delta:\vC^\bullet(\mathcal U, \mc A)\to \vC^{\bullet+1}(\mathcal U, \mc A)$ acts on an element $c=\{ c_{i_0 , \ldots , i_n}\}_{i_0,\dots, i_n\in I}\in \vC^\bullet(\mathcal U, \mc A)$ with $c_{i_0 , \ldots , i_n}\in \mc A_{i_0,\dots, i_n}$ via 
\begin{equation}\label{EQU:chdel}
(\delta(c))_{i_0,\dots,i_{n+1}}:=\sum_{j=0}^{n+1} (-1)^j\cdot  {c_{i_0 \cdots \widehat i_j \cdots i_{n+1}}}|_{U_{i_0,\dots,i_{n+1}}}.
\end{equation}
Since $\delta^2=d_{\mc A}^2=d_{\mc A}\delta-\delta d_{\mc A}=0$, we can take the total differential 
\begin{equation}\label{EQU:D-on-C(U,A)}
D(c)=\delta(c)-(-1)^{|c|}d_{\mc A}(c)
\end{equation}
 on $\vC^n(\mc U, \mc A)$ wich satisfies $D^2=0$. Furthermore, for two covers $\mc U=\{U_i\}_{i\in I}$ and $\mc U'=\{U'_{i'}\}_{i'\in I'}$, and a morphism of covers $f\in \Cov_M(\mc U, \mc U')$, there is an induced cochain map $\vC^\bu(\mc U', \mc A)\to \vC^\bu(\mc U, \mc A)$, $\{c_{i'_0,\dots, i'_n}\}_{i'_0,\dots, i'_n\in I'}\mapsto \{c_{f(i_0),\dots, f(i_n)}\}_{i_0,\dots, i_n\in I}$, since $U_{i_0,\dots,i_n}=U'_{f(i_0), \dots, f(i_n)}$. Thus, we have a functor $\vC^\bu(.,\mc A):\Cov_M^{op}\to \Chain^+$.

Alternatively, there is a cosimplicial non-negatively graded cochain complex $A:\Del\to \Chain^+$ given by the assignment 
\begin{eqnarray*}
A=A^{\bu,\bu}: \Del &\rightarrow& \Chain^+ \\
 {[}n{]}  &\mapsto & A^{n,\bu}:=\prod_{i_0, \cdots, i_n\in I} \mc A_{i_0 \cdots i_n} 
\end{eqnarray*}
In particular, $A^{n,\bu}$ in degree $k$ is $A^{n,k}:= \prod_{i_0, \cdots, i_n} \mc A^k_{i_0 \cdots i_n}$. We may take the total complex of $A$, denoted $\tot(A)$. Recall from \eqref{EQU:Def:tot}, that the total complex of $A$ is defined as $\tot (A) = \bigoplus_n A^{n, \bullet} [n]$, where $A^{n, \bullet}[n]$ is the cochain complex $A^{n, \bullet}$ shifted up by $n$ and the differential is as in \eqref{EQU:d-in-tot}. For two covers $\mc U=\{U_i\}_{i\in I}$ and $\mc U'=\{U'_{i'}\}_{i'\in I'}$, and a morphism of covers $f\in \Cov_M(\mc U, \mc U')$, there is an identity map $\mc A(U'_{f(i_0),\dots, f(i_n)})\stackrel =\to \mc A(U_{i_0,\dots, i_n})$, which induces cochain maps $\prod_{i'_0, \cdots, i'_n\in I'} \mc A_{i'_0 \cdots i'_n}\to \prod_{i_0, \cdots, i_n\in I} \mc A_{i_0 \cdots i_n}$, $\{c_{i'_0,\dots, i'_n}\}_{i'_0,\dots, i'_n\in I'}\mapsto \{c_{f(i_0),\dots, f(i_n)}\}_{i_0,\dots, i_n\in I}$, which assemble to a map of cosimplicial non-negatively graded cochain complexes. Thus, $\tot(A)$ is also a functor $\Cov_M^{op}\to \Chain^+$.
\end{definition}

The next lemma shows that the two constructions in definition \ref{DEF:Presheaf-Cech} are naturally equivalent.

\begin{lemma}\label{lemma:TotandCech}
Let $\mc U=\{U_i\}_{i\in I}$ be an open cover on a manifold $M$, $\mc A$ a presheaf of non-negatively graded cochain complexes on $M$, and $A$ be the cosimplicial non-negatively graded cochain complex associated to $\mc A$.  Then there is an isomorphism $\tot(A) \to \vC^\bullet(\mc U, \mc A)$ from the totalization to the \v{C}ech cochain complex . 

Moreover, the isomorphisms $\tot(A)\to \vC^\bullet(\mc U, \mc A)$ yield a natural equivalence of functors $\Cov_M^{op}\to \Chain^+$.
\end{lemma}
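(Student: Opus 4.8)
The plan is to recognize that $\tot(A)$ and $\vC^\bu(\mc U,\mc A)$ have literally the same underlying graded object, and then to match their differentials up to a fixed sign twist. Because $\mc A$ is non-negatively graded, the internal degree $k$ is always $\geq 0$, so in each total degree $m$ only finitely many \v{C}ech degrees $n$ contribute, and both complexes are the finite direct sum $\bigoplus_{n=0}^{m}\prod_{i_0,\dots,i_n}\mc A^{m-n}_{i_0\cdots i_n}$: in $\tot(A)=\bigoplus_n A^{n,\bu}[n]$ the shift $[n]$ places $A^{n,m-n}$ in total degree $m$, while in $\vC^\bu$ the convention $|a|=n+k$ for $a\in\mc A^k_{i_0\cdots i_n}$ does the same. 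I would define $\Phi:\tot(A)\to\vC^\bu(\mc U,\mc A)$ on each summand by the tautological identification, possibly corrected by a sign $\epsilon(n,k)$ to be pinned down below; this is manifestly a degreewise isomorphism of graded abelian groups, so only the compatibility with differentials is at issue.

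Next I would make the cosimplicial structure of $A$ explicit and identify its cofaces with the \v{C}ech restriction maps. The coface $d^j:[n]\hookrightarrow[n+1]$ induces the map $A^{n,\bu}\to A^{n+1,\bu}$ sending a family $\{c_{i_0\cdots i_n}\}$ to the one whose $(i_0,\dots,i_{n+1})$-component is the restriction $c_{i_0\cdots\widehat{i_j}\cdots i_{n+1}}|_{U_{i_0,\dots,i_{n+1}}}$, which is exactly the $j$-th summand of the \v{C}ech differential \eqref{EQU:chdel}. Hence the alternating sum $\sum_{j=0}^{n+1}(-1)^j d^j$ of cofaces is precisely $\delta$, while the internal differential of the cosimplicial complex is $d_{\mc A}$ in each degree. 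These two operators commute, as recorded after \eqref{EQU:chdel} by $d_{\mc A}\delta-\delta d_{\mc A}=0$.

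The heart of the argument is to check that the totalization differential of \eqref{EQU:d-in-tot}, transported through $\Phi$, becomes the total differential $D(c)=\delta(c)-(-1)^{|c|}d_{\mc A}(c)$ of \eqref{EQU:D-on-C(U,A)}. The differential on $\tot(A)=\bigoplus_n A^{n,\bu}[n]$ assembles the shifted internal differential with the coface alternating sum, and the shift $[n]$ supplies exactly the sign that forces the two commuting operators to anticommute after twisting, which is the same standard double-complex sign trick that turns the commuting pair $(\delta,d_{\mc A})$ into the square-zero operator $D$. I would then fix $\epsilon(n,k)$ so that the shifted internal part of $\tot(A)$ is carried to $-(-1)^{|c|}d_{\mc A}$ and the coface sum to $+\delta$, matching $D$ without residual signs; this is a finite and purely formal computation comparing the conventions of \eqref{EQU:d-in-tot} and \eqref{EQU:D-on-C(U,A)}.

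For the second assertion, naturality in the cover, I observe that both functors are defined on a morphism $f\in\Cov_M(\mc U,\mc U')$ by the same reindexing $\{c_{i'_0\cdots i'_n}\}\mapsto\{c_{f(i_0)\cdots f(i_n)}\}$, using $U_{i_0,\dots,i_n}=U'_{f(i_0),\dots,f(i_n)}$ as in definition \ref{DEF:Presheaf-Cech}. Since $\Phi$ acts summand-by-summand through the tautological identification and a sign depending only on $(n,k)$, data that reindexing preserves, it commutes with the induced cochain maps, yielding a natural equivalence $\Cov_M^{op}\to\Chain^+$. The only genuine obstacle is the sign bookkeeping of the third paragraph: reconciling the shift-induced signs built into \eqref{EQU:d-in-tot} with the explicit $-(-1)^{|c|}$ of \eqref{EQU:D-on-C(U,A)}, and pinning down $\epsilon(n,k)$ so that no stray signs survive.
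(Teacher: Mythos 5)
Your proposal is correct and follows essentially the same route as the paper's proof: identify the two complexes as the same graded object in each total degree (finitely many \v{C}ech degrees contributing since $\mc A$ is non-negatively graded), observe that the differentials \eqref{EQU:d-in-tot} and \eqref{EQU:D-on-C(U,A)} differ only by a degree-dependent sign, fix this by a sign twist on each summand, and deduce naturality from the fact that a morphism of covers acts by the same reindexing on both sides. The paper simply makes your $\epsilon(n,k)$ explicit as $(-1)^{|c|(|c|+1)/2}$, depending only on the total degree $|c|=n+k$, which works because the two differentials differ by the overall factor $(-1)^{|c|+1}$.
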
 

\begin{proof}
An element of degree $k$ in $\prod_\ell A^{\ell, \bullet}[\ell]$ is a collection of elements $c^{j,k-j} \in A^{j,k-j}[j]$, where $j \geq 0$.   An element of total degree $k$ in the \v{C}ech complex $\vC^\bullet(\mc U, \mc A)$ is a collection of elements $c^{j,k-j} \in \vC^i (\mc U, \mc A)$, where $c^{j, k-j}$ associates to an open set $U_{i_0 \cdots i_j}$ an element $c^{j, k-j} \in \mc A^{k-j}_{i_0 \cdots i_j}$.  By definition, $\mc A^{k-j}_{i_0 \cdots i_j}$ is a factor in $A^{j, k-j}$. Since the differential $D$ in \eqref{EQU:D-on-C(U,A)} and $d$ in \eqref{EQU:d-in-tot} differ by a factor $(-1)^{|c|+1}$, the cochain isomorphism $\tot(A) \to \vC^\bullet(\mc U, \mc A)$ is given by $c^{j,k-j}\mapsto (-1)^{\frac{|c^{j,k-j}|\cdot (|c^{j,k-j}|+1)}{2}}\cdot c^{j,k-j}$. This proves the first statement.

For the second statement, note that since a morphism of covers acts on the indices of the collections in $\prod_{i_0,\dots, i_n} \mc A_{i_0,\dots, i_n}$ and $\vC^\bu(\mc U, \mc A)$ in the same way (as described in definition \ref{DEF:Presheaf-Cech}), these isomorphisms induce a natural transformation. 
\end{proof}

Given a cosimplicial non-negatively graded cochain complex $A \in (\Chain^+)^{\Del}$, we get a non-positively graded cochain complex by applying the functor $\quot$ and taking totalization, $\Tot(\quot A)$; cf. appendix \ref{APPX:totalization}.  Alternatively, we can take the total complex and apply the functor $\quot$, $Q (\tot (A))$.  The following lemma shows that these two cochain complexes are equivalent. 
\begin{lemma}\label{lemma:Totandq}
Let $A: \Del \rightarrow \Chain^+$ be a cosimplicial non-negatively graded cochain complex.  Then, 
\begin{equation*}
\Tot(\bigoplus_\ell \quot(A^{\ell, \bullet})) \cong \quot ( \tot ( A))
\end{equation*}
\end{lemma}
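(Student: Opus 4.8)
The plan is to factor $\quot$ as $\quot(V)=(V\otimes\Cu)^{\bu\leq 0}$, i.e.\ tensoring with the polynomial ring $\Cu$ on a degree $-2$ generator followed by quotienting out the positive degrees, and to show that each of these two operations commutes, in the appropriate sense, with passage to the total complex. Concretely, I would prove the two intermediate identities
\[
\tot(A)\otimes\Cu\;\cong\;\tot\bigl(A\otimes\Cu\bigr)
\qquad\text{and}\qquad
\bigl(\tot(B)\bigr)^{\bu\leq 0}\;\cong\;\Tot\bigl(B^{\bu\leq 0}\bigr),
\]
where $A\otimes\Cu$ is the cosimplicial cochain complex $[\ell]\mapsto A^{\ell,\bu}\otimes\Cu$, and $B=A\otimes\Cu$ with $B^{\bu\leq 0}$ its levelwise truncation $[\ell]\mapsto (A^{\ell,\bu}\otimes\Cu)^{\bu\leq 0}=\quot(A^{\ell,\bu})$. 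Chaining these gives $\quot(\tot A)=(\tot(A)\otimes\Cu)^{\bu\leq 0}\cong(\tot(A\otimes\Cu))^{\bu\leq 0}\cong\Tot(\quot A)$, which is the assertion, throughout using the explicit direct-sum model for $\Tot$ of a cosimplicial non-positively graded cochain complex recorded in appendix \ref{APPX:totalization}.

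The first identity is essentially formal: tensoring with the fixed complex $(\Cu,0)$ is exact and commutes with the direct sum $\bigoplus_\ell$ and the shifts $[\ell]$ out of which $\tot$ is built (recall $\tot(A)=\bigoplus_\ell A^{\ell,\bu}[\ell]$ from \eqref{EQU:Def:tot}); since $u$ is central of even degree it introduces no Koszul signs, so the total differential of \eqref{EQU:d-in-tot} is carried to the total differential after tensoring. For the underlying graded identification in the second identity I would simply compute both sides in a fixed total degree $-p$: writing a homogeneous element as $a\,u^{j}$ with $a\in A^{\ell,k}$, the cosimplicial index $\ell$ shifts the total degree up by $\ell$, so each side is canonically $\bigoplus_{\ell,j\geq 0}A^{\ell,\,2j-p-\ell}\,u^{j}$ (with the convention $A^{\ell,<0}=0$). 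Thus on underlying graded objects the comparison map is the identity matching the summand $A^{\ell,k}u^j$ to itself.

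The main obstacle, and the only real content, is the second identity: it asserts that truncation commutes with totalization even though the two sides truncate in different orders. In $\quot(\tot A)$ one totalizes first and truncates the \emph{total} degree at $0$, whereas in $\Tot(\quot A)$ one first truncates each level $\quot(A^{\ell,\bu})$ at its own \emph{internal} degree and only then totalizes, the outer truncation being forced by $\Tot$ landing in $\Chain^-$. The key bookkeeping point is that both the cosimplicial coface differential $\sum_i(-1)^i d^i$ and the internal differential $d_{A}$ raise the total degree by exactly $1$, while the shift by $\ell$ raises total degree by a non-negative amount; hence for $a\,u^j\in A^{\ell,k}u^j$ one has internal degree $k-2j$ and total degree $k-2j+\ell$, so total degree $\leq 0$ forces internal degree $\leq -\ell\leq 0$, making the total-degree truncation the more restrictive one and the two truncations agree as graded objects. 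Checking that the two differentials agree on this common object reduces to the single potentially troublesome case in which $d_A$ leaves the levelwise truncation, i.e.\ raises internal degree from $0$ to $1$; but internal degree $0$ together with total degree $\leq 0$ forces $\ell=0$, where internal and total degree coincide, so this term is killed by the total-degree truncation on both sides. With the differentials thereby matched, the comparison is an isomorphism of cochain complexes; should the sign conventions of $\Tot$ in appendix \ref{APPX:totalization} and of $\tot$ in \eqref{EQU:d-in-tot} fail to coincide on the nose, the discrepancy is a degree-dependent sign absorbed exactly as in the proof of lemma \ref{lemma:TotandCech}.
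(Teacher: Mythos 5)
Your proof is correct, but it takes a genuinely different route from the paper's. The paper argues categorically: since $\quot$ is a right adjoint (to $\trunc$), it commutes with the equalizer of lemma \ref{lemma:TotComplex} that computes $\tot(A)$; the proof then identifies the resulting factors $\quot(Hom^\bullet(\Norm(\Z\Delt{\ell}),A^{\ell,\bullet}))$ with the factors $(\quot(A^{\ell,\bullet}))^{\Delt{\ell}}$ of the equalizer computing $\Tot$, using the Hom--tensor adjunction and the fact that, because $\Norm^\bullet(\Z\Delt{\ell})$ sits in non-negative degrees, truncating $A^{\ell,\bullet}\otimes\Z[u]$ before or after tensoring with it yields the same quotient. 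You instead factor $\quot$ as $(-\otimes\Z[u])$ followed by truncation and check by explicit degree bookkeeping that each operation commutes with passage to the total complex. Your key observations --- that total degree $\leq 0$ forces internal degree $\leq -\ell\leq 0$, so the levelwise truncation is subsumed by the total one, and that the only differential term threatening to distinguish the two truncation orders already lands in positive total degree and dies on both sides --- are precisely the content the paper compresses into the assertion that \eqref{eqnarray:Totq} and \eqref{eqnarray:qTot} are equal. Your route makes the comparison of the two truncations fully explicit; the paper's route avoids all case analysis at the cost of hiding that comparison in one line.

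One step needs shoring up: the appendix does \emph{not} record a direct-sum model for $\Tot$ of a cosimplicial object of $\Chain^-$. Lemma \ref{lemma:TotComplex} identifies the equalizer built from the plain $Hom^\bullet(\Norm(\Z\Delt{\ell}),-)$ with $\bigoplus_\ell(-)[\ell]$ only for cosimplicial objects of $\Chain^+$, whereas the totalization on the left-hand side of the lemma uses the exponentials $(C^\bullet)^{\Delt{\ell}}=q(Hom^\bullet(\Norm(\Z\Delt{\ell}),C^\bullet))$ of example \ref{example:negativechaincomplex}, which carry an extra truncation. Your argument therefore implicitly uses the statement that $\Tot(C)\cong\bigl(\bigoplus_\ell C^{\ell,\bullet}[\ell]\bigr)^{\bullet\leq 0}$ for $C$ cosimplicial in $\Chain^-$. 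This is true, and it is proved by the same top-cell argument as lemma \ref{lemma:TotComplex} together with the observation that the degreewise truncation commutes with the degreewise equalizer, but it is an additional lemma rather than a citation. With that supplied, your argument is complete.
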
 

\begin{proof}
First note that by lemma \ref{lemma:TotComplex},  $\tot(A)=\prod_{\ell}A^{\ell, \bullet}[\ell] $ is the equalizer
\begin{eqnarray*}
eq: \prod_{[\ell]} Hom^\bullet(N(\Z \Delt{\ell}), A^{\ell, \bullet}) \rightrightarrows \prod_{[m] \rightarrow [n]} Hom^\bullet(N(\Z \Delt{m}), A^{n, \bullet}) 
\end{eqnarray*}
Since $\quot$ is a right adjoint, it commutes with limits.  The equalizer is a limit, so the right hand side of the equation can be re-written
\begin{eqnarray*}
\quot \left( \prod_{\ell} A^{\ell, \bullet}[\ell] \right)  &=& \quot \left(eq: \prod_{[\ell]} Hom^\bullet(N(\Z \Delt{\ell}), A^{\ell, \bullet}) \rightrightarrows \prod_{[m] \rightarrow [n]} Hom^\bullet(N(\Z \Delt{m}), A^{n, \bullet}) \right) \\
&=&eq: \prod_{[\ell]} \quot Hom^\bullet(N(\Z \Delt{\ell}), A^{\ell, \bullet}) \rightrightarrows \prod_{[m] \rightarrow [n]} \quot  Hom^\bullet(N(\Z \Delt{m}), A^{n, \bullet}) 
\end{eqnarray*}
Use the Hom-Tensor adjunction to rewrite  $Hom^\bullet(N(\Z \Delt{\ell}), A^{\ell, \bullet})$ as $\Norm^\bullet(\Z\Delt{\ell} )\otimes A^{\ell, \bullet}$, where $\Norm^\bullet(\Z\Delt{\ell})$ is the normalized cochain complex on $\Delt{\ell}$.  Then 
\begin{equation}\label{eqnarray:Totq}
\quot(Hom^\bullet(\Norm(\Z\Delt{\ell}, A^{\ell,\bu}))) = \frac{ \Norm^\bullet(\Z\Delt{\ell}) \otimes A^{\ell, \bullet}\ot \Z[u]}{ (\Norm^\bullet(\Z\Delt{\ell}) \otimes A^{\ell, \bullet}\ot \Z[u])^{\bu > 0}}
\end{equation}

We compare this expression to 
\begin{equation*}
\Tot(\bigoplus_\ell \quot(A^{\ell, \bullet}) )= eq: \prod_{[\ell]} (\quot(A^{\ell, \bullet}))^{\Delt{\ell}} \rightrightarrows \prod_{[m] \rightarrow[n]} (\quot(A^{n, \bullet}))^{\Delt{m}}
\end{equation*}
By definition $(\quot(A^{\ell, \bullet}))^{\Delt{\ell}} $ is equal to $q(Hom^\bu(\Norm(\Z\Delt{\ell}), \quot(A^{\ell, \bullet})))$; cf. example \ref{example:negativechaincomplex} item (6) on page \pageref{ITEM:6-}. This, using the Hom-Tensor adjunction, we can write as 
\begin{equation}\label{eqnarray:qTot}
\quot(\Chain(\Norm(\Z\Delt{\ell}, \quot(A^{\ell, \bullet}))) =\frac{ \Norm^\bullet(\Z\Delt{\ell}) \otimes \frac{A^{\ell, \bullet} \ot \Z [u]  }{( A^{\ell, \bullet} \ot \Z [u] )^{\bu>0}}}
{ \left(\Norm^\bullet(\Z\Delt{\ell}) \otimes \frac{A^{\ell, \bullet} \ot \Z [u]  }{( A^{\ell, \bullet} \ot \Z [u] )^{\bu>0}}\right)^{\bu>0}}
\end{equation}
We see that (\ref{eqnarray:Totq}) and (\ref{eqnarray:qTot}) are equal, which proves the lemma.
\end{proof}

Given a cosimplicial object in $\Chain^-$, denoted by $A$, we can apply totalization in $(\Chain^-)^{\Del}$ to it get an object in $\Chain^-$, and then apply the Dold-Kan functor to get a simplicial abelian group.  Alternatively, we can apply the Dold-Kan functor to every $A^{\ell, \bullet}$ to get a cosimplicial simplicial abelian group, and then apply totalization in $(\sAb)^\Del$ to get a simplicial abelian group.  The next lemma says that these simplicial abelian groups are weakly equivalent.

\begin{lemma}\label{lemma:TotandDK}
Let $A :\Del \rightarrow \Chain^-$ be a cosimplicial non-positively graded cochain complex.  Then there is a weak equivalence of simplicial abelian groups 
\begin{equation*}
 \Tot(\bigoplus_{\ell} \DK(A^{\ell, \bullet}))\rightarrow \DK(\Tot(\bigoplus_\ell A^{\ell,\bullet} ))
\end{equation*}
\end{lemma}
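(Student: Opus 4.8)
The plan is to realize both simplicial abelian groups as ends over $\Del$ and to compare them objectwise. By the definition of totalization (appendix \ref{APPX:totalization}), the source is the equalizer
\[
\Tot\Big(\bigoplus_\ell \DK(A^{\ell,\bu})\Big)=\mathrm{eq}\Big(\prod_{[\ell]}\DK(A^{\ell,\bu})^{\Delt{\ell}}\rightrightarrows \prod_{[m]\to[n]}\DK(A^{n,\bu})^{\Delt{m}}\Big),
\]
where the cotensor $(-)^{\Delt{\ell}}$ is taken in $\sAb$. On the other hand, by lemma \ref{lemma:TotComplex} the total complex $\Tot(\bigoplus_\ell A^{\ell,\bu})$ is the analogous equalizer in $\Chain^-$ formed with the cotensor $Hom^\bu(\Norm(\Z\Delt{\ell}),-)$. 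Since the Dold-Kan functor $\DK$ is an equivalence of categories (theorem \ref{THM:Dold-Kan}), it preserves all limits, hence commutes with both the equalizer and the products, so that
\[
\DK\Big(\Tot\big(\textstyle\bigoplus_\ell A^{\ell,\bu}\big)\Big)=\mathrm{eq}\Big(\prod_{[\ell]}\DK\big(Hom^\bu(\Norm(\Z\Delt{\ell}),A^{\ell,\bu})\big)\rightrightarrows \prod_{[m]\to[n]}\DK\big(Hom^\bu(\Norm(\Z\Delt{m}),A^{n,\bu})\big)\Big).
\]
Thus the two sides are equalizers of diagrams of the same shape, and it suffices to produce a natural weak equivalence between those diagrams.

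Next I would construct the comparison. For a simplicial set $K$ and a simplicial abelian group $Y$, the shuffle map $\Norm(\Z K)\ot \Norm(Y)\to \Norm(\Z K\ot Y)$ is a chain homotopy equivalence (Eilenberg-Zilber). Passing to adjoints through the cotensor and Hom-tensor adjunctions, and using $\Norm\circ\DK=\mathrm{id}$, this induces a natural comparison transformation
\[
\zeta_{K,C}:\DK(C)^{K}\longrightarrow \DK\big(Hom^\bu(\Norm(\Z K),C)\big),
\]
which is a weak equivalence for every $K$ and every $C\in\Chain^-$ precisely because the shuffle map is a chain homotopy equivalence. Naturality in $K$ ensures that, specialized to $K=\Delt{\ell}$ and $C=A^{\ell,\bu}$, the maps $\zeta$ are compatible with the two coface-induced morphisms in the equalizers, so they assemble into a map of equalizer diagrams and hence into the asserted comparison $\Tot(\bigoplus_\ell \DK(A^{\ell,\bu}))\to \DK(\Tot(\bigoplus_\ell A^{\ell,\bu}))$.

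Finally I would show that this objectwise weak equivalence of cosimplicial simplicial abelian groups passes to a weak equivalence on the equalizers, i.e.\ on the totalizations. This is the step I expect to be the main obstacle: the end defining $\Tot$ computes the homotopy limit over $\Del$, and is therefore homotopy invariant, only once one knows the cosimplicial objects are Reedy fibrant. Objectwise fibrancy is automatic here, since every simplicial abelian group is a Kan complex; the delicate point is that the matching maps are fibrations, which I would verify directly from the cosimplicial structure of $A$ in $(\Chain^-)^{\Del}$. Alternatively — and perhaps more in keeping with the explicit chain-level methods of this section — I would transport everything through $\DK$ to cochain complexes, where $\zeta$ becomes the Eilenberg-Zilber map between the two double complexes associated with the filtration by cosimplicial degree; a levelwise quasi-isomorphism of such double complexes induces a quasi-isomorphism on totalizations once the corresponding spectral sequences are seen to converge, and establishing this convergence from the grading bounds ($A^{\ell,\bu}\in\Chain^-$ shifted up by $\ell$) is the real content. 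Granting either form of homotopy invariance, $\zeta$ induces the desired weak equivalence, completing the proof.
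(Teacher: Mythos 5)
Your proposal follows essentially the same route as the paper: both sides are written as equalizers, $\DK$ (being a right adjoint) is pulled inside the limit, the $n$-simplices of the two cotensors are identified via the Hom-tensor and $\Norm\dashv\DK$ adjunctions, and the comparison map is induced by the Eilenberg--Zilber map. The homotopy-invariance issue you flag at the end is real, but the paper's own proof stops at exactly the same point, asserting that the levelwise quasi-isomorphism given by $EZ$ yields the stated weak equivalence on totalizations.
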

\begin{proof}
First note that the functor $\DK$ is a right adjoint, so it commutes with all limits.  Since totalization is an equalizer of two maps, we get the following equalities:
\begin{eqnarray*}
\DK(\Tot \bigoplus_\ell A^{\ell, \bullet}) &=& \DK\left(eq:\prod_{[\ell]} (A^{\ell, \bullet})^{\Delt{\ell}} \rightrightarrows \prod_{[m] \rightarrow [n]}( A^{n, \bullet})^{\Delt{m}} \right) \\
&=& eq: \prod_{[\ell]}\DK( (A^{\ell, \bullet})^{\Delt{\ell}}) \rightrightarrows \prod_{[m] \rightarrow [n]} \DK(( A^{n, \bullet})^{\Delt{m}} )
\end{eqnarray*}
By definition, see equation \eqref{EQU:DK-def}, the $n$-simplices of $\DK((A^{\ell, \bullet})^{\Delt{\ell}} )$ is the set of morphisms in $\Chain^-$ from $\Norm(\Z\Delt{n})$ to $(A^{\ell, \bullet})^{\Delt{\ell}}$.  Using the adjunctions between Hom and $\otimes$, %and $\DK$ and $\Norm$,
we see that 
\begin{align}\label{eqnarray:DKTot}
\DK((A^{\ell, \bullet})^{\Delt{\ell}} )_n &=   {\Chain^-}(\Norm (\Z\Delt{n}), (A^{\ell, \bullet})^{\Delt{\ell}}) \\
\nonumber 
&= \Chain^-(\Norm(\Z\Delt{n}),q(Hom^\bu(\Norm(\Z\Delt{\ell}),A^{\ell,\bu}))) \\
\nonumber
& =  \Chain^- (\Norm(\Z\Delt{n} ) \otimes \Norm(\Z \Delt{\ell}), A^{\ell, \bullet}).
\end{align}

On the other hand, consider   
\begin{equation*}
\Tot (\bigoplus_\ell \DK(A^{\ell, \bullet})) = eq: \prod_{[\ell]} (\DK(A^{\ell, \bullet}) )^{\Delt{\ell}} \rightrightarrows \prod_{[m]\rightarrow[n]} (\DK(A^{n, \bullet}))^{\Delt{m}}
\end{equation*}
By definition $(\DK(A^{\ell, \bullet}))^{\Delt{\ell}}$ is a simplicial abelian group.  Its $n$-simplices, by the definition of the simplicial model category structure in example \ref{example:simplicialabeliangroups}, are equal to 
\begin{align*}
((\DK(A^{\ell, \bullet}))^{\Delt{\ell}})_n&=Map(\Z\Delta^\ell,\DK(A^{\ell,\bu}))_n
\\
&=
\sAb (\Z\Delt{n} \otimes \Z\Delt{\ell}, \DK(A^{\ell, \bullet}))\\
&= \Chain^- (\Norm(\Z (\Delt{n} \times \Delt{\ell}) ), A^{\ell, \bullet})
\end{align*}
where the last equality follows from the adjunction between $\Norm$ and $\DK$. We now use the Eilenberg-Zilber map $EZ:\Norm(\Z\Delt{n})\otimes \Norm(\Z\Delt{\ell})\to \Norm(\Z(\Delt{n}\times \Delt{\ell}))$, cf. \cite[1.6.11]{Loday},
\begin{multline}\label{EQU:EZ-map}
EZ(e_{j_0,\dots,j_p}\otimes e_{i_0,\dots, i_q})
\\
:=\sum_{(p,q)\text{-shuffles }(\mu,\nu)} \sgn(\mu,\nu) \cdot (s_{\nu_q}\dots s_{\nu_1}(e_{j_0,\dots,j_p}),s_{\mu_p}\dots s_{\mu_1}(e_{i_0,\dots, i_q})),
\end{multline}
where we used notation from example \ref{REM:NZDeltak}. We note that $EZ$ is a quasi-isomorphism with quasi-inverse the Alexander-Whitney map (cf. \cite[1.6.12]{Loday}). Thus, we get a map
\begin{equation}\label{eqnarray:EZ}
\Chain^- (\Norm(\Z (\Delt{n} \times \Delt{\ell}) ), A^{\ell, \bullet})\stackrel{(-)\circ EZ}\longrightarrow
\Chain^- (\Norm(\Z\Delt{n} ) \otimes \Norm(\Z \Delt{\ell}), A^{\ell, \bullet})   
\end{equation}
This is exactly what we had in \eqref{eqnarray:DKTot}, which completes our proof.
\end{proof}

\begin{lemma}\label{lemma:TotandForget}
Let $\mc F:\Ab^{\Del^{op}} \rightarrow \Set^{\Del^{op}}$ be the forgetful functor and let $A^{\bullet, \bullet}:\Del \rightarrow \Ab^{\Del^{op}}$ be a cosimplicial simplicial abelian group.  Then 
\begin{eqnarray*}
\mc F(\Tot(\bigoplus_\ell A^{\ell, \bullet})) \cong \Tot(\bigoplus_\ell \mc F (A^{\ell, \bullet})) 
\end{eqnarray*}
\end{lemma}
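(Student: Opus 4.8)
The plan is to use that $\Tot$ is a limit together with the fact that $\mc F$ is a right adjoint, thereby reducing everything to a single compatibility check: that $\mc F$ commutes with the cotensor $(-)^{\Delt{\ell}}$ by a simplicial set. As recorded in the proof of Lemma \ref{lemma:TotandDK}, the totalization of a cosimplicial object $B^{\bu,\bu}$ is the equalizer
\begin{equation*}
\Tot(\bigoplus_\ell B^{\ell,\bu}) = eq: \prod_{[\ell]} (B^{\ell,\bu})^{\Delt{\ell}} \rightrightarrows \prod_{[m] \rightarrow [n]} (B^{n,\bu})^{\Delt{m}},
\end{equation*}
and the same description holds for cosimplicial objects in $\sSet$.

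The functor $\mc F$ is right adjoint to the free abelian group functor $\Z(-):\sSet\to\sAb$, so it preserves all limits; in particular it commutes with the products and with the equalizer appearing above. Hence, once we know that $\mc F$ intertwines the cotensors, the desired isomorphism follows by applying $\mc F$ termwise to the displayed equalizer.

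The heart of the argument is thus to establish, naturally in $B\in\sAb$ and $[\ell]\in\Del$, an isomorphism $\mc F(B^{\Delt{\ell}})\cong (\mc F B)^{\Delt{\ell}}$. Using the description of the $n$-simplices of the cotensor from the proof of Lemma \ref{lemma:TotandDK}, I would compute
\begin{align*}
\big(\mc F(B^{\Delt{\ell}})\big)_n
&= \sAb(\Z\Delt{n}\otimes\Z\Delt{\ell},\, B)
\cong \sAb(\Z(\Delt{n}\times\Delt{\ell}),\, B) \\
&\cong \sSet(\Delt{n}\times\Delt{\ell},\, \mc F B)
= \big((\mc F B)^{\Delt{\ell}}\big)_n,
\end{align*}
where the first isomorphism is the strong monoidality of $\Z(-)$, which carries the cartesian product $\Delt{n}\times\Delt{\ell}$ to the tensor product $\Z\Delt{n}\otimes\Z\Delt{\ell}$, and the second is the free--forgetful adjunction $\sAb(\Z S,B)\cong\sSet(S,\mc F B)$.

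Finally I would assemble the pieces: commuting $\mc F$ past the products and the equalizer, and then substituting $\mc F((A^{\ell,\bu})^{\Delt{\ell}})\cong(\mc F A^{\ell,\bu})^{\Delt{\ell}}$, turns the equalizer computing $\mc F(\Tot(\bigoplus_\ell A^{\ell,\bu}))$ into the one computing $\Tot(\bigoplus_\ell \mc F(A^{\ell,\bu}))$. The main obstacle is purely bookkeeping: one must check that the isomorphism $\mc F(B^{\Delt{\ell}})\cong(\mc F B)^{\Delt{\ell}}$ is natural enough to intertwine \emph{both} parallel arrows of the two equalizers, i.e.\ that it is compatible both with the cosimplicial structure maps $A^\theta$ and with the cotensor restrictions along $\Delt{\theta}:\Delt{m}\to\Delt{n}$ for every $\theta:[m]\rightarrow[n]$ in $\Del$. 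This follows from the naturality of the monoidality isomorphism and of the adjunction, and is the only step that is not immediately formal.
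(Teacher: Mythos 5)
Your proposal is correct and follows essentially the same route as the paper, whose entire proof is the remark that $\mc F$ is a right adjoint and hence the argument of the two preceding lemmas applies verbatim. You additionally spell out the one non-trivial detail the paper leaves implicit --- that $\mc F$ commutes with the cotensor $(-)^{\Delt{\ell}}$, via the free--forgetful adjunction and the identification $\Z\Delt{n}\otimes\Z\Delt{\ell}\cong\Z(\Delt{n}\times\Delt{\ell})$ --- which is a worthwhile verification but not a departure from the paper's approach.
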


\begin{proof}
The proof proceeds similarly to the previous two lemmas, since the forgetful functor $\mc F$ is a right adjoint, just as the functors $\quot$ and $\DK$ were.
\end{proof}

Combining the previous three lemmas, we obtain the following diagram of functors
\begin{equation}\label{eqnarray:commutative}
\xymatrix{ (\Chain^+) ^\Del \ar^{\quot}[r] \ar^{\tot}[d] & (\Chain^-)^\Del \ar^{\DK}[r]\ar^{\Tot}[d] & (\Ab^{\Del^{op}})^\Del \ar^{\Tot}[d] \ar^{\mc F} [r] & (\Set^{\Del^{op}} )^{\Del} \ar^{\Tot}[d] \\ 
\Chain^+ \ar^{\quot}[r]& \Chain^-\ar^{\DK}[r]  & \Ab^{\Del^{op}} \ar^{\mc F}[r] & \Set^{\Del^{op}} }
\end{equation}
The left and right squares strictly commute, while the middle square induces a commutative square in the homotopy categories of these model categories.

We want to apply the above to the holomorphic forms on a complex manifold $M$. 
Let $M\in \CMan$ with an open cover $\mc U=\{U_i\}_{i\in I}$, and let $\CN \mc U:\Del^{op}\to \CMan$ be the \v{C}ech nerve, which is the simplicial manifold whose $k$-simplices are $\CN \mc U_k=\coprod_{i_0,\dots, i_k\in I}U_{i_0,\dots, i_k}$. Thus, applying holomorphic forms (with zero differential) gives a cosimplicial non-negatively graded cochain complex $\Ohol^\bu(\CN \mc U):\Del\stackrel {\CN\mc U}\to \CMan^{op}\stackrel{\Ohol^\bu(.)}\to \Chain^+$. Now, denote by $\Ohol^\bu$ the sheaf of holomorphic forms (with zero differential). By definition \ref{DEF:Presheaf-Cech}, there is a cosimplicial cochain complex $\Ohol^{\bu,\bu}:\Del\to \Chain^+$, $[n]\mapsto \prod_{i_0,\dots, i_n}\Ohol^\bu(U_{i_0,\dots, i_n})$. Then these two cosimplicial non-negatively graded cochain complexes coincide:

\begin{proposition}\label{PROP:Tot(Ohol)=Cech}
In the notation above, the above two cosimplicial non-negatively graded cochain complexes are isomorphic:
\begin{equation}\label{EQU:Ohol=Ohol-bullet-bullet}
\Ohol^\bu(\CN \mc U) \cong \Ohol^{\bu,\bu}.
\end{equation}

After taking the totalization, we get isomorphisms of cochain complexes
\begin{equation}\label{EQU:Tot(NU)=Cech}
\tot(\Ohol^\bu(\CN \mc U)) \cong \tot(\Ohol^{\bu,\bu})\cong \vC^\bu(\mc U, \Ohol^\bu),
\end{equation}
where the differential on $\vC^\bu(\mc U, \Ohol^\bu)$ is $\delta$ from \eqref{EQU:chdel}.

Furthermore, there is a weak equivalence of simplicial sets
\begin{equation}\label{EQU:Tot(DK(NU))=DK(Cech)}
\Tot(\OM(\CN \mc U))=\Tot(\DKSet (\Ohol^\bu(\CN \mc U)\ul))
\stackrel \sim\longrightarrow \DKSet(\vC^\bu(\mc U, \Ohol^\bu)\ul)
\end{equation}
\end{proposition}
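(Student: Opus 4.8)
The plan is to prove the three claims in sequence, the first two being essentially bookkeeping and the third a diagram chase through \eqref{eqnarray:commutative}. For the isomorphism \eqref{EQU:Ohol=Ohol-bullet-bullet}, I would unwind both sides in each cosimplicial degree $n$. Since $\CN\mc U_n=\coprod_{i_0,\dots,i_n}U_{i_0,\dots,i_n}$ is a disjoint union of complex manifolds and holomorphic forms convert disjoint unions into products, we get $\Ohol^\bu(\CN\mc U_n)=\prod_{i_0,\dots,i_n}\Ohol^\bu(U_{i_0,\dots,i_n})$, which is exactly $\Ohol^{\bu,\bu}[n]$ from definition \ref{DEF:Presheaf-Cech}. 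It then remains to verify that the cosimplicial structure maps agree: the cofaces and codegeneracies of $\Ohol^\bu(\CN\mc U)$ are pullbacks along the face and degeneracy maps of $\CN\mc U$, which are induced by the inclusions $U_{i_0,\dots,i_n}\hookrightarrow U_{i_0,\dots,\widehat{i_j},\dots,i_n}$; these match the restriction maps defining the cosimplicial structure of $\Ohol^{\bu,\bu}$ on the nose. This is a direct check.

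Next, for \eqref{EQU:Tot(NU)=Cech}, applying the functor $\tot$ to the isomorphism just established yields $\tot(\Ohol^\bu(\CN\mc U))\cong\tot(\Ohol^{\bu,\bu})$, and the identification $\tot(\Ohol^{\bu,\bu})\cong\vC^\bu(\mc U,\Ohol^\bu)$ is precisely lemma \ref{lemma:TotandCech} applied to the presheaf $\mc A=\Ohol^\bu$. Because $\Ohol^\bu$ carries the zero internal differential, the total differential \eqref{EQU:D-on-C(U,A)} reduces to the \v{C}ech differential $\delta$ from \eqref{EQU:chdel}, as claimed.

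The substance is the weak equivalence \eqref{EQU:Tot(DK(NU))=DK(Cech)}, which I would obtain by traversing the outer rectangle of \eqref{eqnarray:commutative} starting from $\Ohol^\bu(\CN\mc U)\in(\Chain^+)^\Del$. Going right and then down produces $\Tot(\mc F(\DK(\quot(\Ohol^\bu(\CN\mc U)))))=\Tot(\OM(\CN\mc U))$, while going down and then right produces $\mc F(\DK(\quot(\tot(\Ohol^\bu(\CN\mc U)))))$. The right square commutes strictly by lemma \ref{lemma:TotandForget} and the left square strictly by lemma \ref{lemma:Totandq}, so the only non-strict step is the middle square, which commutes up to the weak equivalence of lemma \ref{lemma:TotandDK}. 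Using that the forgetful functor $\mc F$ preserves weak equivalences of simplicial abelian groups (a map of simplicial abelian groups is a weak equivalence precisely when its underlying map of simplicial sets is), the three squares assemble into a single weak equivalence $\Tot(\OM(\CN\mc U))\stackrel\sim\to\mc F(\DK(\quot(\tot(\Ohol^\bu(\CN\mc U)))))$. Finally, substituting $\tot(\Ohol^\bu(\CN\mc U))\cong\vC^\bu(\mc U,\Ohol^\bu)$ from the previous step, and using that $\quot$ of this is $\vC^\bu(\mc U,\Ohol^\bu)\ul$, rewrites the target as $\DKSet(\vC^\bu(\mc U,\Ohol^\bu)\ul)$, which is the desired conclusion.

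The main obstacle is precisely this middle square: $\DK$ and $\Tot$ fail to commute strictly because totalization is built from a product and equalizer indexed by $\Del$ together with the tensoring $(-)^{\Delt{\ell}}$, and Dold-Kan does not interchange with this tensor structure on the nose — the discrepancy is bridged by the Eilenberg-Zilber map, whose role is already isolated in lemma \ref{lemma:TotandDK}. The remaining care is organizational: one must confirm that the weak equivalence produced at the level of simplicial abelian groups genuinely descends to the stated weak equivalence of simplicial sets after applying $\mc F$, rather than merely a degreewise statement, which is ensured by the detection property of $\mc F$ recorded above.
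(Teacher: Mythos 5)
Your proposal is correct and follows essentially the same route as the paper: identify $\Ohol^\bu(\CN\mc U)$ with $\Ohol^{\bu,\bu}$ degreewise via the disjoint-union-to-product property of forms, apply lemma \ref{lemma:TotandCech} for the \v{C}ech identification with $D=\delta$ because the internal differential is zero, and obtain the weak equivalence by chasing around diagram \eqref{eqnarray:commutative} using lemmas \ref{lemma:Totandq}, \ref{lemma:TotandDK}, and \ref{lemma:TotandForget}. Your added remarks on checking the cosimplicial structure maps and on $\mc F$ detecting weak equivalences are sensible elaborations of steps the paper leaves implicit, not a different argument.
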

\begin{proof}
For \eqref{EQU:Ohol=Ohol-bullet-bullet}, note that the cosimplicial non-negatively graded cochain complexes map $[n]$ to the cochain algebra $\Ohol^\bu(\coprod_{i_0,\dots, i_n\in I}U_{i_0,\dots, i_n})\cong \prod_{i_0,\dots, i_n\in I}\Ohol^\bu(U_{i_0,\dots, i_n})$ with the zero differential. Equation \eqref{EQU:Tot(NU)=Cech} follows from \eqref{EQU:Ohol=Ohol-bullet-bullet} and lemma \ref{lemma:TotandCech} applied to $\Ohol^\bu$, where the total differential from \eqref{EQU:D-on-C(U,A)} $D=\delta$ on $\vC^\bu(\mc U, \Ohol^\bu)$, since we have set the cochain differential to be zero. Finally, \eqref{EQU:Tot(DK(NU))=DK(Cech)} follows via lemmas \ref{lemma:Totandq}, \ref{lemma:TotandDK}, and \ref{lemma:TotandForget}, or, in other words, follow $\Ohol^\bu(\CN \mc U)\cong \Ohol^{\bu,\bu}$ around the diagram in \eqref{eqnarray:commutative}:
\begin{multline*}
\Tot(\DKSet( \Ohol^\bu(\CN \mc U)\ul))
\cong \Tot(\mc F(\DK( \quot(\Ohol^{\bu,\bu}))))
\cong \mc F(\Tot(\DK( \quot(\Ohol^{\bu,\bu}))))
\\
\to \mc F(\DK(\Tot (\quot(\Ohol^{\bu,\bu}))))
\cong \DKSet(\quot(\tot (\Ohol^{\bu,\bu})))
\cong\DKSet(\vC^\bu(\mc U, \Ohol^\bu)\ul).
\end{multline*}
This completes the proof.
\end{proof}

\begin{corollary}\label{COR:eval-circ-Tot(Ch)}
Using equations \eqref{EQU:TOT(CS)} and \eqref{EQU:Tot(DK(NU))=DK(Cech)}, we thus have a map 
\begin{equation}\label{EQU:eval-circ-Tot(Ch)}
\Tot(\HVB(\CN\mc U))\stackrel{\Tot(\CS(\CN\mc U))}\longrightarrow \Tot(\OM(\CN\mc U))\stackrel{\eqref{EQU:Tot(DK(NU))=DK(Cech)}}\longrightarrow  \DKSet(\vC^\bu(\mc U, \Ohol^\bu)\ul).
\end{equation}
\end{corollary}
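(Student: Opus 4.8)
The plan is to obtain the desired map simply as the composite of two maps that have already been constructed, so the proof is essentially formal. First, the map \eqref{EQU:TOT(CS)}, namely $\Tot(\CS(\CN\mc U)):\Tot(\HVB(\CN\mc U))\to \Tot(\OM(\CN\mc U))$, is produced by the corollary immediately following Proposition \ref{PROP:cosimpl-simpl-set}: since $\CS:\HVB\to \OM$ is a natural transformation by Theorem \ref{THM:CS-natural-transformation}, precomposing with $\CN\mc U:\Del\to \CMan^{op}$ yields a map $\CS(\CN\mc U):\HVB(\CN\mc U)\to \OM(\CN\mc U)$ of cosimplicial simplicial sets (Proposition \ref{PROP:cosimpl-simpl-set}), and applying the functorial totalization gives the arrow on totalizations.

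Second, the weak equivalence \eqref{EQU:Tot(DK(NU))=DK(Cech)}, namely $\Tot(\OM(\CN\mc U))\stackrel{\sim}{\longrightarrow} \DKSet(\vC^\bu(\mc U,\Ohol^\bu)\ul)$, is exactly the content of Proposition \ref{PROP:Tot(Ohol)=Cech}. All that remains for the corollary is to compose these two maps of simplicial sets, which is immediate. Thus the only thing to write down is the composite displayed in \eqref{EQU:eval-circ-Tot(Ch)}.

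Since the statement is a formal consequence of results already in hand, there is no genuine obstacle at this stage; the substantive work lies entirely upstream. In particular, the real content is buried in \eqref{EQU:Tot(DK(NU))=DK(Cech)}, whose proof traces the identification $\Ohol^\bu(\CN\mc U)\cong \Ohol^{\bu,\bu}$ around the diagram \eqref{eqnarray:commutative} and relies on Lemmas \ref{lemma:Totandq}, \ref{lemma:TotandDK}, and \ref{lemma:TotandForget}. The one subtlety worth flagging is that \eqref{EQU:Tot(DK(NU))=DK(Cech)} is only a weak equivalence rather than a strict isomorphism, owing to the middle square of \eqref{eqnarray:commutative} commuting only up to the homotopy supplied by the Eilenberg-Zilber map \eqref{EQU:EZ-map}. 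Consequently the composite \eqref{EQU:eval-circ-Tot(Ch)} is a well-defined map of simplicial sets, but its identification with the \v{C}ech-Hodge model should be read through that homotopy coherence rather than as an on-the-nose equality.
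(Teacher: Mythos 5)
Your proposal is correct and matches the paper exactly: the corollary is stated without proof precisely because it is the formal composite of the map \eqref{EQU:TOT(CS)} with the weak equivalence \eqref{EQU:Tot(DK(NU))=DK(Cech)} from Proposition \ref{PROP:Tot(Ohol)=Cech}. Your added caveat that the second arrow is only a weak equivalence (via the Eilenberg-Zilber map) rather than an isomorphism is accurate and consistent with the paper's diagram \eqref{eqnarray:commutative}.
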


\subsection{Computing $\Tot(\CS(\CN \mc U))$}

In equation \eqref{EQU:eval-circ-Tot(Ch)}, we obtained a map $\Tot(\HVB(\CN\mc U))\to  \DKSet(\vC^\bu(\mc U, \Ohol^\bu)\ul)$. In this section, we give an explicit description of this map. We first state a more explicit description of $n$-simplices of $\Tot(\HVB(\CN\mc U))$, extending the statement from proposition \ref{PROP:Tot(Hol(U))0}.

\begin{proposition}\label{PROP:Tot(Hol(U))n}
Let $\mc U=\{U_i\}_{i\in I}$ be an open cover of a complex manifold $M$. Then, the $n$-simplices of $\Tot(\HVB(\CN\mc U))$ are given by a choice of $n+1$ many holomorphic bundles $E^{(0)}_i\to U_i, \dots, E^{(n)}_i\to U_i$ (for each $i\in I$) together with holomorphic connections $\nabla^{(0)}_i, \dots, \nabla^{(n)}_i$, respectively, and holomorphic bundle isomorphisms $g^{(p)}_{i,j}:E^{(p)}_j|_{U_{i,j}}\to E^{(p)}_i|_{U_{i,j}}$ (not necessarily respecting the connections) satisfying the cocycle condition $g^{(p)}_{i,j}|_{U_{i,j,k}}\circ g^{(p)}_{j,k}|_{U_{i,j,k}}=g^{(p)}_{i,k}|_{U_{i,j,k}}$ on $U_{i,j,k}$, as well as $g_{i,i}^{(p)}=id_{E_i^{(p)}}$. Moreover, there are bundle isomorphisms $f_i^p:E^{(p-1)}_i\to E^{(p)}_i$ over $U_i$ (also not necessarily respecting the connections), satisfying $f^p_i|_{U_{i,j}}\circ g_{i,j}^{(p-1)}=g_{i,j}^{(p)}\circ f^p_j|_{U_{i,j}}$.
\end{proposition}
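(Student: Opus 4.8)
The plan is to run the argument of proposition \ref{PROP:Tot(Hol(U))0} one dimension higher, now keeping track of the extra simplicial direction coming from the factor $\Delt{n}$. By the definition of totalization (appendix \ref{APPX:totalization}), an $n$-simplex of $\Tot(\HVB(\CN\mc U))$ is a family of simplicial maps $\Delt{\ell}\times\Delt{n}\to\HVB(\CN\mc U_\ell)$, one for each $[\ell]\in\Ob(\Del)$, compatible under the cosimplicial structure maps of $\CN\mc U$, i.e. lying in the appropriate equalizer. The key first step is to note that $\Delt{\ell}\times\Delt{n}=\Nerve([\ell]\times[n])$ (the nerve preserves products) and that $\HVB(\CN\mc U_\ell)=\Nerve(\HVBnabla(\CN\mc U_\ell))$ is itself a nerve, so each component is the same datum as a functor $F_\ell:[\ell]\times[n]\to\HVBnabla(\CN\mc U_\ell)$. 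Thus an $n$-simplex is a compatible family of such functors, and the whole proof is the translation of the equalizer condition into the listed data. This functorial viewpoint is precisely what replaces the phrase ``determined by its image on the unique maximal non-degenerate $\ell$-simplex'' used in the $n=0$ case, where $\Delt{\ell}\times\Delt{0}=\Delt{\ell}$ had a single top cell.

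Second, I would restrict each $F_\ell$ to the horizontal slices $[\ell]\times\{p\}\subset[\ell]\times[n]$, for $p=0,\dots,n$. Since the $\CN\mc U$ structure maps act only in the $\ell$-direction, the family $\{F_\ell|_{[\ell]\times\{p\}}\}_\ell$ is again compatible, hence is exactly a $0$-simplex of $\Tot(\HVB(\CN\mc U))$ in the sense of proposition \ref{PROP:Tot(Hol(U))0}. Invoking that proposition verbatim, once for each of the $n+1$ slices, produces holomorphic bundles $E^{(p)}_i\to U_i$ with connections $\nabla^{(p)}_i$ and transition isomorphisms $g^{(p)}_{i,j}$ satisfying the stated cocycle condition and the normalization $g^{(p)}_{i,i}=\mathrm{id}$. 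This accounts for all the bundles, connections, and the $g^{(p)}_{i,j}$.

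Third, the new content lives in the vertical arrows $(a,p-1)\to(a,p)$ of $[\ell]\times[n]$. Reading off the $\ell=0$ functor $F_0:[0]\times[n]\to\HVBnabla(\coprod_i U_i)$, the vertical generators give, over each $U_i$, bundle isomorphisms $f^p_i:E^{(p-1)}_i\to E^{(p)}_i$. Exactly as in proposition \ref{PROP:Tot(Hol(U))0}, the equalizer condition forces the vertical arrows of every higher $F_\ell$ to be the pullbacks of the $f^p_i$ under the inclusions $U_{i_0,\dots,i_\ell}\hookrightarrow U_{i_r}$, so no new data appears for $\ell>0$. Finally, commutativity in $F_1:[1]\times[n]\to\HVBnabla(\coprod_{i,j}U_{i,j})$ of the square on vertices $(0,p-1),(1,p-1),(0,p),(1,p)$, read off over the $(i,j)$-component, is exactly the relation $f^p_i|_{U_{i,j}}\circ g^{(p-1)}_{i,j}=g^{(p)}_{i,j}\circ f^p_j|_{U_{i,j}}$, while commutativity of the purely horizontal squares recovers the cocycle conditions of the previous step. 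There are no further independent constraints because $\Nerve(\HVBnabla)$ is $2$-coskeletal.

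Conversely, I would verify that any tuple of the stated form assembles into a compatible family of functors $F_\ell$: the per-slice cocycle conditions and the single compatibility relation provide precisely the commuting squares needed to define each $F_\ell$ on all of $[\ell]\times[n]$, and the higher equalizer compatibilities are then automatic, being generated by these relations exactly as in figure \ref{fig:E0-bundle}. The main obstacle is bookkeeping: one must keep the cosimplicial \v{C}ech direction $\ell$ and the nerve direction $n$ cleanly separated, track which face map sends the $(i,j)$-component to $U_i$ versus $U_j$, and check that the mixed squares — together with the horizontal cocycle conditions — encode the full equalizer constraint, with all higher coherences following formally from $2$-coskeletality. Once the identification of simplices with functors $[\ell]\times[n]\to\HVBnabla(\CN\mc U_\ell)$ is in place, the rest is a direct transcription of the $n=0$ argument.
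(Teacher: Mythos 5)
Your proof is correct and follows the same overall skeleton as the paper's: identify an $n$-simplex with a compatible family of maps $\Delt{\ell}\times\Delt{n}\to\HVB(\CN\mc U_\ell)$, restrict along the vertex inclusions $\rho_p:[0]\to[n]$ to produce $n+1$ copies of the $0$-simplex data of proposition \ref{PROP:Tot(Hol(U))0}, read the $f^p_i$ off the $\ell=0$ component, extract the intertwining relation from the $\ell=1$ component, and observe that nothing new happens for higher $\ell$. Where you genuinely diverge is in the technical device: the paper stays at the level of simplicial sets, chasing maximal non-degenerate simplices of $\Delt{\ell}\times\Delt{n}$ and explicitly matching faces of the two $2$-simplices of $\Delt{1}\times\Delt{1}$ under the equalizer maps $\phi$ and $\psi$, whereas you invoke the fact that the nerve is fully faithful and preserves products to replace each component by a functor $F_\ell:[\ell]\times[n]\to\HVBnabla(\CN\mc U_\ell)$. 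This buys you the commuting square $f^p_i|_{U_{i,j}}\circ g^{(p-1)}_{i,j}=g^{(p)}_{i,j}\circ f^p_j|_{U_{i,j}}$ for free (functors out of a product of posets send the two composites around an elementary square to the same morphism) and disposes of all higher coherences by $2$-coskeletality of the nerve, at the cost of having to justify those two categorical facts; the paper's version is more elementary but requires the reader to follow a fairly intricate shuffle-and-face bookkeeping. Your converse direction is stated a bit briskly — to assemble the $F_\ell$ one should note that $[\ell]\times[n]$ is a poset generated by elementary horizontal and vertical steps modulo the elementary squares, so the cocycle conditions and the single mixed relation are exactly the required relations — but this is no less complete than the paper's own treatment of that direction, which delegates it to figure \ref{fig:E1-bundle}.
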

\begin{proof}
$\Tot(\HVB(\CN\mc U))$ is a simplicial subcomplex of $\prod_{[\ell]\in \Ob(\Del)} (\HVB(\CN\mc U_\ell))^{\Delt{\ell}}$, which is a simplicial set whose $n$-simplices consist of elements $\prod_{[\ell]\in \Ob(\Del)} {\lsSet}(\Delt{\ell}\times \Delt{n},\HVB(\CN\mc U_\ell))$. Thus, an $n$-simplex is given by a sequence of simplicial set maps $\Delt{\ell}\times \Delt{n}\to \HVB(\CN\mc U_\ell)$ for $\ell=0,1,2,\dots$ satisfying certain conditions. 

First, for fixed $p\in \{0,\dots, n\}$, consider the map $\rho_p:[0]\to [n], \rho_p(0)=p$. Then, an $n$-simplex of $\Tot(\HVB(\CN\mc U))$ gives rise to $0$-simplex of $\Tot(\HVB(\CN\mc U))$, via the composition $\Delt{\ell}\times \Delt{0}\stackrel {id\times \rho_p}\to \Delt{\ell}\times \Delt{n}\to \HVB(\CN\mc U_\ell)$. By proposition \ref{PROP:Tot(Hol(U))0} this $0$-simplex is given by a sequence of vector bundles $E^{(p)}_i\to U_i$ with a holomorphic connection  $\nabla^{(p)}$ and bundle maps $g^{(p)}_{i,j}$ satisfying the cocyle condition.
\[
\scalebox{.95}{
\begin{tikzpicture}[scale=1]
\node (E0) at (1.5,1.5) {$E^{(p)}_i$}; 

\node (E0') at (3,1.5) {$E^{(p)}_j|_{U_{i,j}}$};
\node (E1') at (5,1.5) {$E^{(p)}_i|_{U_{i,j}}$}; 
\draw [->] (E0') -- node[above] {$g^{(p)}_{i,j}$} (E1');

\node (E0'') at (7,0.5) {$E^{(p)}_k|_{U_{i,j,k}}$};
\node (E1'') at (8,2) {$E^{(p)}_j|_{U_{i,j,k}}$}; 
\node (E2'') at (9,0.5) {$E^{(p)}_i|_{U_{i,j,k}}$}; 
\draw [->] (E0'') -- node[left] {$g^{(p)}_{j,k}|_{U_{i,j,k}}$} (E1'');
\draw [->] (E1'') -- node[right] {$g^{(p)}_{i,j}|_{U_{i,j,k}}$} (E2'');
\draw [->] (E0'') -- node[below] {$g^{(p)}_{i,k}|_{U_{i,j,k}}$} (E2'');

\node (E0''') at (11,0.5) {$E^{(p)}_\ell|_{U_{i,j,k,\ell}}$};
\node (E1''') at (12,2) {$E^{(p)}_k|_{U_{i,j,k,\ell}}$}; 
\node (E2''') at (14,0.5) {$E^{(p)}_j|_{U_{i,j,k,\ell}}$}; 
\node (E3''') at (15,2) {$E^{(p)}_i|_{U_{i,j,k,\ell}}$}; 
\draw [->] (E0''') -- node[left] {$g^{(p)}_{k,\ell}|_{\dots}$} (E1''');
\draw [->] (E0''') -- node[below] {$g^{(p)}_{j,\ell}|_{\dots}$} (E2''');
\draw [->] (E0''') -- node[above] {$g^{(p)}_{i,\ell}|_{\dots}$} (E3''');
\draw [->] (E1''') -- node[below] {$g^{(p)}_{j,k}|_{\dots}$} (E2''');
\draw [->] (E1''') -- node[above] {$g^{(p)}_{i,k}|_{\dots}$} (E3''');
\draw [->] (E2''') -- node[right] {$g^{(p)}_{i,j}|_{\dots}$} (E3''');

\node (F) at (16.5,1.5) {$\dots$};
\end{tikzpicture}
}
\]

On the other hand, when $\ell=0$, the map $\Delt{0}\times \Delt{n}\to \HVB(\CN\mc U_0)$ is determined by its image on the maximal non-degenerate $n$-simplex of $\Delt{0}\times \Delt{n}$ in $\HVB(\CN\mc U_0)_n$, i.e., by $n$ vector bundle isomorphisms over $U_i$
$$E_i^{(0)}\stackrel {f^1_i}\to E^{(1)}_i\stackrel {f^{2}_i}\to E_i^{(2)}\stackrel {f_i^{3}}\to\dots \stackrel {f_i^{n-1}}\to E_i^{(n-1)}\stackrel {f_i^n}\to E_i^{(n)},$$
where the $E^{(p)}_i$ coincide with the ones from above, since they are the images of $\Delt{0}\times \Delt{0}\stackrel {}\to \Delt{0}\times\Delt{n}$. Now, the $g^{(p)}_{i,j}$ and $f^p_i$ commute as follows:
\[
\scalebox{.95}{
\begin{tikzpicture}[scale=1]
\node (E0) at (4,3.5) {$E^{(p-1)}_{j}|_{U_{i,j}}$};
\node (E1) at (7,3.5) {$E^{(p-1)}_{i}|_{U_{i,j}}$}; 
\draw [->] (E0) -- node[above] {$g^{(p-1)}_{i,j}$} (E1);

\node (E0') at (4,1.5) {$E^{(p)}_{j}|_{U_{i,j}}$};
\node (E1') at (7,1.5) {$E^{(p)}_{i}|_{U_{i,j}}$}; 
\draw [->] (E0') -- node[below] {$g^{(p)}_{i,j}$} (E1');

\draw [->] (E0) -- node[left] {$f^p_{j}|_{U_{i,j}}$} (E0');
\draw [->] (E0) -- node[above] {$h^p_{i,j}$} (E1');
\draw [->] (E1) -- node[right] {$f^p_{i}|_{U_{i,j}}$} (E1');
\end{tikzpicture}
}
\]
which can be seen by considering the image of two maximal non-degenerate $2$-simplices of $\Delt{1}\times\Delt{1}\stackrel{id\times \rho_p}\to\Delt{1}\times\Delt{n}\to \HVB(\CN \mc U_1)$ with $\rho_p:[1]\to [n], \rho_p(0)=p-1, \rho_p(1)=p$. The equalizer condition of the totalization shows that these two $2$-simplices have faces $f^p_i|_{U_{i,j}}, h^{p}_{i,j}, g^{(p-1)}_{i,j}$ and $g_{i,j}^{(p)}, h^p_{i,j}, f_j^p|_{U_i,j}$, respectively.

For example, in the equalizer \eqref{DEF:Tot}, the $\rho=\delta_0:[0]\to [1]$ component $\Delt{0}\times \Delt{n}\to \HVB(\CN \mc U_1)$ receives an output from $\phi$ via the component $\Delt{1}\times \Delt{n}\to \HVB(\CN \mc U_1)$, and it receives an output from $\psi$ via the component $\Delt{0}\times \Delt{n}\to \HVB(\CN \mc U_0)$, which must coincide. 
\[
\scalebox{1}{
\begin{tikzpicture}[scale=1]
\node (E0) at (0,.6) {$\lsSet(\Delt{1}\times\Delt{n},\HVB(\CN \mc U_1))$};
\node (E1) at (0,0) {$\lsSet(\Delt{0}\times\Delt{n},\HVB(\CN \mc U_0))$}; 
\node (E2) at (7,.3) {$\lsSet(\Delt{0}\times\Delt{n},\HVB(\CN \mc U_1))$};
\draw [->] (E0) -- node[above] {$\phi|_{\delta_0}$} (E2);
\draw [->] (E1) -- node[below] {$\psi|_{\delta_0}$} (E2);
\end{tikzpicture}
}
\]
Now, the image of the $1$-simplex $([1]\stackrel{\sigma_0}\to[0], [1]\stackrel{\rho_p}\to [n])\in \Delt{0}\times\Delt{n}$ is, by definition, the $1$-simplex $\coprod_i E_i^{(p-1)}\stackrel {\coprod_i f_i^p}\to \coprod_i E_i^{(p)}$ in $\HVB(\CN \mc U_0)_1$. Under $\psi$, this maps to the ($\delta_0:[0]\to [1]$)-component $\Delt{0}\times \Delt{n}\to \HVB(\CN \mc U_1)$, which maps the $1$-simplex $([1]\stackrel{\sigma_0}\to[0], [1]\stackrel{\rho_p}\to [n])$ to $\coprod_{i,j} E_j^{(p-1)}\stackrel {\coprod_{i,j} f_j^p}\to \coprod_{i,j} E_j^{(p)}$ (suitably restricted to $U_{i,j}$). On the other hand, consider the $2$-simplex $([2]\stackrel {\sigma_1}\to [1],[2]\stackrel{\lambda_p}\to [n])\in \Delt{1}\times \Delt{n}$, where $\lambda_p(0)=p-1, \lambda_p(1)=p-1, \lambda_p(2)=p$. Assume that this gets mapped to $E'_0\stackrel{g'}\to E'_1\stackrel{f'}\to E'_2$ in $\HVB(\CN \mc U_1)_2$. Note that the $0$th face of $([2]\stackrel {\sigma_1}\to [1],[2] \stackrel{ \lambda_p}\to [n])$ is in fact $([1]\stackrel{\sigma_1\circ \delta_0=\delta_0\circ \sigma_0}\to[1], [1]\stackrel{\lambda_p\circ \delta_0=\rho_p}\to [n])$, which thus gets mapped to $E'_1\stackrel {f'} \to E'_2$ in $\HVB(\CN \mc U_1)_1$. Now the map $\phi$ into the ($\delta_0:[0]\to[1]$)-component maps $\Delt{1}\times \Delt{n}\stackrel \alpha \to \HVB(\CN \mc U_1)$ to $\Delt{0}\times \Delt{n}\stackrel {\delta_0(.)\times id}\to\Delt{1}\times \Delt{n}\stackrel \alpha\to \HVB(\CN \mc U_1)$, so that it maps $([1]\stackrel{\sigma_0}\to[0], [1]\stackrel{\rho_p}\to [n])\mapsto ([1]\stackrel{\delta_0\circ \sigma_0}\to[1], [1]\stackrel{\rho_p}\to [n])\mapsto E'_1\stackrel{f'}\to E'_2$. Since the images of $\phi$ and $\psi$ coincide, we obtain that the $0$th face $E'_1\stackrel{f'}\to E'_2$ of the above $2$-simplex equals $\coprod_{i,j} E_i^{(p-1)}\stackrel {\coprod_{i,j} f_i^p}\to \coprod_i E_i^{(p)}$. A similar argument shows that $E'_0\stackrel{g'}\to E'_1$ equals $\coprod_{i,j} E_j^{(p-1)}\stackrel {\coprod_{i,j} g_{i,j}^{(p-1)}}\to \coprod_{i,j} E_i^{(p-1)}$, etc.

This shows that $f^p_i|_{U_{i,j}}\circ g_{i,j}^{(p-1)}=h^p_{i,j}=g_{i,j}^{(p)}\circ f^p_j|_{U_{i,j}}$ as claimed.

Finally, we note that there are no higher relations, since all higher cocycle conditions follow from the ones on the $1$-simplices (cf. figure \ref{fig:E1-bundle}).
\begin{figure}[h]
\[
\scalebox{.95}{
\begin{tikzpicture}[scale=1]
\node (E0) at (.5,1.5) {$E^{(0)}_i$}; 

\node (E0') at (3,1.5) {$E^{(0)}_j|_{U_{i,j}}$};
\node (E1') at (5,1.5) {$E^{(0)}_i|_{U_{i,j}}$}; 
\draw [->] (E0') -- node[above] {$g^{(0)}_{i,j}$} (E1');

\node (E0'') at (8,1) {$E^{(0)}_k|_{U_{i,j,k}}$};
\node (E1'') at (10.5,2.5) {$E^{(0)}_j|_{U_{i,j,k}}$}; 
\node (E2'') at (13,1) {$E^{(0)}_i|_{U_{i,j,k}}$}; 
\draw [->] (E0'') -- node[left] {$g^{(0)}_{j,k}|_{U_{i,j,k}}$} (E1'');
\draw [->] (E1'') -- node[right] {$g^{(0)}_{i,j}|_{U_{i,j,k}}$} (E2'');
\draw [->] (E0'') -- node[right] {$g^{(0)}_{i,k}|_{U_{i,j,k}}$} (E2'');

\node (F0) at (.5,-1.5) {$E^{(1)}_i$}; 

\node (F0') at (3,-1.5) {$E^{(1)}_j|_{U_{i,j}}$};
\node (F1') at (5,-1.5) {$E^{(1)}_i|_{U_{i,j}}$}; 
\draw [->] (F0') -- node[below] {$g^{(1)}_{i,j}$} (F1');

\node (F0'') at (8,-2) {$E^{(1)}_k|_{U_{i,j,k}}$};
\node (F1'') at (10.5,-.5) {$E^{(1)}_j|_{U_{i,j,k}}$}; 
\node (F2'') at (13,-2) {$E^{(1)}_i|_{U_{i,j,k}}$}; 
\draw [->] (F0'') -- node[left] {$g^{(1)}_{j,k}|_{U_{i,j,k}}$} (F1'');
\draw [->] (F1'') -- node[right] {$g^{(1)}_{i,j}|_{U_{i,j,k}}$} (F2'');
\draw [->] (F0'') -- node[below] {$g^{(1)}_{i,k}|_{U_{i,j,k}}$} (F2'');

\draw [->] (E0) -- node[left] {$f^1_i$} (F0);
\draw [->] (E0') -- node[left] {$f^1_j|_{U_{i,j}}$} (F0');
\draw [->] (E1') -- node[right] {$f^1_i|_{U_{i,j}}$} (F1');
\draw [->] (E0'') -- node[left] {$f^1_k|_{U_{i,j,k}}$} (F0'');
\draw [->] (E1'') -- node[below] {$f^1_j|_{U_{i,j,k}}$} (F1'');
\draw [->] (E2'') -- node[right] {$f^1_i|_{U_{i,j,k}}$} (F2'');

\node (F) at (15,0) {$\dots$};
\end{tikzpicture}
}
\]
\caption{Higher relations in $\Tot(\HVB(\CN \mc U))_n$ are induced by the conditions on the $1$-simplicies} \label{fig:E1-bundle}
\end{figure}

This completes the proof of the proposition.
\end{proof}

We now use the data from the previous proposition to express $\Tot(\CS(\CN \mc U))$.
\begin{proposition}\label{PROP:Tot-CS-n}
Using the description from proposition \ref{PROP:Tot(Hol(U))n}, the map of $n$-simplices 
$\Tot(\CS(\CN \mc U))_n:\Tot(\HVB(\CN\mc U))_n\to \DKSet(\vC^\bu(\mc U, \Ohol^\bu)\ul)_n$ is given by mapping the generator $e_{j_0,\dots, j_p}$ for $0\leq j_0<\dots<j_p\leq n$ of $N(\Z\Delt{n})$ to the cochain $c^{(j_0,\dots,j_p)}\in \vC^\bu(\mc U, \Ohol^\bu)\ul$ defined as
\begin{multline*}
(c^{(j_0,\dots,j_p)})_{i_0,\dots, i_\ell}= u^{\ell+p}\cdot (-1)^{\frac{p(p-1)}{2}}\cdot \sum_{0\leq \stp_1\leq \stp_2\leq \dots\leq \stp_p\leq \ell} (-1)^{\stp_1+\dots +\stp_p} \cdot \tr\Big[ 
\\
\Big(
(g^{(j_p)}_{i_{\ell},i_{\ell-1}} %g^{(j_p)}_{i_{\ell-1},i_{\ell-2}} 
\dots g^{(j_p)}_{i_{\stp_p+1},i_{\stp_p}})
f^{(j_p,j_{p-1})}_{i_{\stp_p}}  
\dots f^{(j_2,j_1)}_{i_{\stp_2}}
(g^{(j_1)}_{i_{\stp_2},i_{\stp_2-1}}\dots g^{(j_1)}_{i_{\stp_1+1},i_{\stp_1}})f^{(j_1,j_0)}_{i_{\stp_{1}}}
(g^{(j_0)}_{i_{\stp_1},i_{\stp_1-1}}\dots g^{(j_0)}_{i_2,i_1} g^{(j_0)}_{i_1,i_0})\Big)^{-1}
\\ 
\cdot
\nabla(g^{(j_p)}_{i_{\ell},i_{\ell-1}})\dots  \nabla(g^{(j_p)}_{i_{\stp_p+1},i_{\stp_p}})
 \nabla(f^{(j_p,j_{p-1})}_{i_{\stp_p}} )
 \dots\dots
 \nabla(f^{(j_1,j_0)}_{i_{\stp_{1}}})
 \nabla(g^{(j_0)}_{i_{\stp_1},i_{\stp_1-1}})\dots \nabla(g^{(j_0)}_{i_2,i_1} )\nabla(g^{(j_0)}_{i_1,i_0})
\Big],
\end{multline*}
where $f^{(b,a)}_i=f^{b}_i\circ\ldots \circ f^{a+1}_i:E_i^{(a)}\to E_i^{(b)}$, $f_i^{(a,a)}=id_{E_i^{(a)}}$ appears precisely at the position $\stp_1,\dots, \stp_p$, $\nabla$ is the induced connection on the appropriate $Hom(E^{(\cdot)}_\cdot, E^{(\cdot)}_\cdot)$, and everything is suitably restricted to $U_{i_0,\dots, i_\ell}$.
\end{proposition}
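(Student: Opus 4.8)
The plan is to compute $\Tot(\CS(\CN\mc U))_n$ on the generator $e_{j_0,\dots,j_p}$ of $N(\Z\Delt{n})$ by running the $n$-simplex data of Proposition~\ref{PROP:Tot(Hol(U))n} through the chain of identifications from Proposition~\ref{PROP:Tot(Ohol)=Cech}, i.e.\ through lemmas~\ref{lemma:Totandq}, \ref{lemma:TotandDK}, and \ref{lemma:TotandForget}. The decisive point is that, after the Dold--Kan adjunction, extracting the $(i_0,\dots,i_\ell)$ \v{C}ech component of the image of $e_{j_0,\dots,j_p}$ reduces to applying $\CS(\CN\mc U_\ell)$ on the $U_{i_0,\dots,i_\ell}$ factor to the element $EZ(e_{j_0,\dots,j_p}\ot e_{0,\dots,\ell})$, where $EZ$ is the Eilenberg--Zilber shuffle map \eqref{EQU:EZ-map} and $e_{0,\dots,\ell}$ is the top non-degenerate generator in the \v{C}ech direction, this being the factor that governs the $U_{i_0,\dots,i_\ell}$ component. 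Thus the entire computation is pinned down once I make the relevant simplicial map explicit and expand the shuffle sum.

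First I would record the bisimplicial map explicitly. By Proposition~\ref{PROP:Tot(Hol(U))n} the data assembles, over each $U_{i_0,\dots,i_\ell}$, into a commuting grid in which the \v{C}ech direction carries the transition isomorphisms $g^{(p)}$ and the nerve direction carries the maps $f^p$; the compatibility $f^p_i\circ g^{(p-1)}_{i,j}=g^{(p)}_{i,j}\circ f^p_j$ is precisely the functoriality of a map out of the product poset $[\ell]\times[n]$, and so determines a simplicial map $\Delt{\ell}\times\Delt{n}\to \HVB(\CN\mc U_\ell)$. Restricting along the face $\{j_0<\dots<j_p\}$ of $\Delt{n}$, the nerve edges become the composites $f^{(j_a,j_{a-1})}_i=f^{j_a}_i\circ\dots\circ f^{j_{a-1}+1}_i$, so I am left with a functor $[\ell]\times[p]\to \HVBnabla(U_{i_0,\dots,i_\ell})$ built from the $\ell$ horizontal $g$-edges and the $p$ vertical $f$-edges.

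Next I would expand $EZ(e_{j_0,\dots,j_p}\ot e_{0,\dots,\ell})$ and evaluate. Each $(p,\ell)$-shuffle corresponds to a monotone staircase from $(0,0)$ to $(p,\ell)$ in the grid $[\ell]\times[p]$, equivalently to an interleaving of the $p$ vertical steps among the $\ell$ horizontal steps; recording the \v{C}ech coordinate at which the $a$-th vertical step occurs yields exactly an index $\stp_a$, and this sets up a bijection between shuffles and tuples $\{0\leq \stp_1\leq\dots\leq \stp_p\leq \ell\}$. Under the bisimplicial map each staircase becomes a composable chain of $\ell+p$ morphisms whose total composite is the bracketed isomorphism appearing in the trace; applying the formula \eqref{EQU:tr(f-1nablaf-nablag)} from Definition~\ref{DEF:CS-map} (with $\ell+p$ factors, hence the power $u^{\ell+p}$) produces the ordered product of covariant derivatives in the displayed expression, with $f^{(j_a,j_{a-1})}_{i_{\stp_a}}$ inserted at position $\stp_a$. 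Because every staircase is a non-degenerate $(\ell+p)$-simplex of the product, no term dies in the normalized complex, so the shuffle sum collapses to the asserted sum over $(\stp_1,\dots,\stp_p)$.

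The main obstacle I expect is the sign bookkeeping. Writing the $a$-th vertical step at merged position $\mu_a$, one has $\stp_a=\mu_a-(a-1)$ and a direct inversion count shows $\sgn(\mu,\nu)=(-1)^{\stp_1+\dots+\stp_p}$, so the shuffle sign reproduces the per-term sign in the proposition exactly. The residual global factor $(-1)^{\frac{p(p-1)}{2}}$ must then be matched against the combined normalization conventions of the identifications, namely the sign $(-1)^{\frac{|c|(|c|+1)}{2}}$ of lemma~\ref{lemma:TotandCech} together with the degree shifts introduced by $\quot$ and $\DK$ in lemmas~\ref{lemma:Totandq} and \ref{lemma:TotandDK}. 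Confirming that these residual signs conspire to the single constant $(-1)^{\frac{p(p-1)}{2}}$, independent of $\ell$ and of the $\stp_a$, is the delicate step, and I would carry it out by comparing normalized generators on both sides rather than by any conceptual shortcut.
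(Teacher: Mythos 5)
Your proposal is correct and follows essentially the same route as the paper's proof: unwind the identifications of lemmas \ref{lemma:TotandDK}, \ref{lemma:Totandq}, and \ref{lemma:TotandCech}, evaluate $\CS$ on the staircase simplices produced by the Eilenberg--Zilber map applied to $e_{j_0,\dots,j_p}\otimes e_{i_0,\dots,i_\ell}$, identify $(p,\ell)$-shuffles with step tuples via $\stp_a=\mu_a-(a-1)$ so that $\sgn(\mu,\nu)=(-1)^{\stp_1+\dots+\stp_p}$, and attribute the residual $(-1)^{\frac{p(p-1)}{2}}$ to the normalization isomorphism of lemma \ref{lemma:TotandCech} in degree $-p$. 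The paper carries out exactly this sign bookkeeping, so the step you flag as delicate resolves precisely as you anticipate.
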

\begin{proof}
We follow the sequence of maps (cf. \eqref{EQU:Tot(NU)=Cech} and \eqref{EQU:Tot(DK(NU))=DK(Cech)})
\begin{align*}
\Tot(\HVB(\CN\mc U))_n 
 \stackrel {\Tot(\CS(\CN \mc U))_n}\longrightarrow & \Tot(\DKSet(\Ohol^\bu(\CN \mc U)\ul))_n
\\
\stackrel {\,\,\,\,\text{lemma }\ref{lemma:TotandDK}\,\,\,\,}\longrightarrow & \DKSet(\Tot(\Ohol^\bu(\CN \mc U)\ul))_n 
\\
\stackrel {\,\,\text{lemmas }\ref{lemma:Totandq}, \ref{lemma:TotandCech}\,\,}\longrightarrow & \DKSet(\vC^\bu(\mc U, \Ohol^\bu)\ul)_n 
\end{align*}

An $n$-simplex in the simplicial set $\Tot(\HVB(\CN\mc U))$ consists of a sequence of $n$-simplices in $\Nerve(\HVBnabla(\CN \mc U_\ell))^{\Delt{\ell}}$ for $\ell=0,1,2,\dots$ (where $\CN \mc U_\ell=\coprod_{i_0,\dots, i_\ell} U_{i_0,\dots,i_\ell}$), i.e., in $\lsSet(\Delt{n}\times\Delt{\ell},\Nerve(\HVBnabla(\CN \mc U_\ell)))$. In particular, using the notation from example \ref{REM:NZDeltak} and proposition \ref{PROP:Tot(Hol(U))n}, the $p+q$ simplex $(s_{\nu_{q}}\dots s_{\nu_{1}}(e_{j_0,\dots,j_p}), s_{\mu_{p}}\dots s_{\mu_{1}}(e_{i_0,\dots,i_q}))$ in $\Delt{n}\times \Delt{\ell}$ gets mapped to compositions of $g^{(r)}_{ij} $ and $f^r_i$ restricted to $\CN \mc U_\ell$.
\begin{equation*}
\scalebox{1}{
\begin{tikzpicture}[scale=1]
\node (E0) at (.5,0.1) {$ E^{(r-1)}_i$};
\node (E0') at (3.5,0) {$\bu$};
\node (E1') at (5.5,0) {$\bu$}; 
\draw [->] (E0') -- node[above] {$ g^{(r-1)}_{i,j}$} (E1');
\node (E0'') at (8,-.2) {$\bu$};
\node (E1'') at (10.5,0.3) {$\bu$}; 
\node (E2'') at (13,-.2) {$\bu$}; 
\draw [line width=0.4mm,->] (E0'') -- node[above] {$ g^{(r-1)}_{j,k}$} (E1'');
\draw [->] (E1'') -- node[right] {} (E2'');
\draw [->] (E0'') -- node[right] {} (E2'');
\node (F0) at (.5,-1.1) {$ E^{(r)}_i$}; 
\node (F0') at (3.5,-1) {$\bu$};
\node (F1') at (5.5,-1) {$\bu$}; 
\draw [->] (F0') -- node[below] {$ g^{(r)}_{i,j}$} (F1');
\node (F0'') at (8,-1.2) {$\bu$};
\node (F1'') at (10.5,-.7) {$\bu$}; 
\node (F2'') at (13,-1.2) {$\bu$}; 
\draw [->] (F0'') -- node[left] {} (F1'');
\draw [line width=0.4mm,->] (F1'') -- node[above] {$ g^{(r)}_{i,j}$} (F2'');
\draw [->] (F0'') -- node[below] {} (F2'');
\draw [->] (E0) -- node[left] {$ f^r_i$} (F0);
\draw [->] (E0') -- node[left] {$ f^r_j$} (F0');
\draw [->] (E1') -- node[right] {$ f^r_i$} (F1');
\draw [->] (E0'') -- node[left] {} (F0'');
\draw [line width=0.4mm,->] (E1'') -- node[left] {$ f^r_j$} (F1'');
\draw [->] (E2'') -- node[right] {} (F2'');
\node (E0-) at (.5,1.2) {$\vdots$};  \draw [->] (E0-) -- node[above] {} (E0);
\node (E0+) at (.5,-2.2) {$\vdots$};  \draw [->] (F0) -- node[above] {} (E0+);
\node (E0'-) at (3.5,1) {$\vdots$};  \draw [->] (E0'-) -- node[above] {} (E0');
\node (E0'+) at (3.5,-2) {$\vdots$};  \draw [->] (F0') -- node[above] {} (E0'+);
\node (E1'-) at (5.5,1) {$\vdots$};  \draw [->] (E1'-) -- node[above] {} (E1');
\node (E1'+) at (5.5,-2) {$\vdots$};  \draw [->] (F1') -- node[above] {} (E1'+);
\node (E0''-) at (8,.8) {$\vdots$};  \draw [->] (E0''-) -- node[above] {} (E0'');
\node (E0''+) at (8,-2.2) {$\vdots$};  \draw [->] (F0'') -- node[above] {} (E0''+);
\node (E1''-) at (10.5,1.3) {$\vdots$};  \draw [->] (E1''-) -- node[above] {} (E1'');
\node (E1''+) at (10.5,-1.8) {$\vdots$};  \draw [->] (F1'') -- node[above] {} (E1''+);
\node (E2''-) at (13,.8) {$\vdots$};  \draw [->] (E2''-) -- node[above] {} (E2'');
\node (E2''+) at (13,-2.2) {$\vdots$};  \draw [->] (F2'') -- node[above] {} (E2''+);
\end{tikzpicture}
}
\end{equation*}
This map is exhibited in the following few examples:
\[
\begin{tabular}{l|l|l}
 $(e_r,e_0)\mapsto \coprod E_i^{(r)}$  & $(s_0 e_{r-1,r},s_1 e_{01})$ & $(s_2 s_0 e_{r-1,r},s_1 e_{0,1,2})$ \\ 
 $(e_{r-1,r},s_0 e_0)\mapsto \coprod f_i^r$ & \hspace{\tabspac} \hspace{.4cm}$\mapsto (\bu\stackrel{\coprod g^{(r-1)}_{i,j}}\to\bu\stackrel{\coprod f^r_i}\to\bu)$ & \hspace{\tabspac} $\mapsto(\bu\stackrel{\coprod g^{(r-1)}_{j,k}}\to \bu\stackrel{\coprod f^r_j}\to \bu\stackrel{\coprod g^{(r)}_{i,j}}\to \bu)$ \\  
  & $(s_1 e_{r-1,r},s_0 e_{01})$ &   \\
 & \hspace{\tabspac}\hspace{.4cm} $\mapsto (\bu\stackrel{\coprod f^r_j}\to\bu\stackrel{\coprod g^{(r)}_{i,j}}\to\bu)$ &  
\end{tabular}
\]

Now, applying $\Tot(\CS(\CN \mc U))$ means to apply $\CS$ to each simplex in the nerve $\Nerve(\HVBnabla(\CN \mc U_\ell))$, i.e., we apply \eqref{EQU:tr(f-1nablaf-nablag)} to composable morphisms. In the above examples, we thus obtain (on $U_{i}$, $U_{i,j}$, and $U_{i,j,k}$, respectively):
\[
\begin{tabular}{l|l|l}
 $(e_r,e_0)\mapsto \dim(E^{(r)})$  
 & $ (s_0 e_{r-1,r},s_1 e_{01})\mapsto u^2 \cdot \tr[$ 
 & $(s_2 s_0 e_{r-1,r},s_1 e_{0,1,2})$ 
 \\ 
 $(e_{r-1,r},s_0 e_0)$ 
 & \hspace{\tabspac} $ (f^r_i g^{(r-1)}_{i,j})^{-1}\nabla f^r_i\nabla g^{(r-1)}_{i,j}]$ 
 & \hspace{\tabspac} $\mapsto u^3\cdot \tr[ (g^{(r)}_{i,j} f^r_j g^{(r-1)}_{j,k})^{-1}$
 \\  
 \hspace{\tabspac} $\mapsto u\cdot \tr[ (f_i^r)^{-1}\nabla f_i^r]$ 
 & $(s_1 e_{r-1,r},s_0 e_{01})\mapsto u^2 \cdot \tr[$ 
 &  \hspace{\tabspac}\hspace{1.6cm}$ \nabla g^{(r)}_{i,j} \nabla f^r_j \nabla g^{(r-1)}_{j,k}]$
 \\
 & \hspace{\tabspac} $(g^{(r)}_{i,j} f^r_j)^{-1}\nabla g^{(r)}_{i,j} \nabla f^r_j]$ 
 &  
\end{tabular}
\]

Next, by lemma \ref{lemma:TotandDK}, we map this into $\DKSet(\Tot\Ohol^\bu(\CN \mc U)\ul)$, whose $n$-simplices are given by cochain maps in $ \Chain^- (\Norm(\Z\Delt{n} ) \otimes \Norm(\Z \Delt{\ell}), \Ohol^\bu(\CN \mc U_\ell)\ul)$ for $\ell=0,1,2,\dots$; cf. equation \eqref{eqnarray:DKTot}. We obtain these cochain maps by applying the Eilenberg-Zilber map \eqref{EQU:EZ-map}. More precisely, to a generator $e_{j_0,\dots, j_p}\otimes e_{i_0,\dots, i_q}$ of $\Norm(\Z\Delt{n} ) \otimes \Norm(\Z \Delt{\ell})$ we assign the sum over all $(p,q)$-shuffles.
\[
\begin{tabular}{l|l|l}
 $e_r\otimes e_0\mapsto \dim(E^{(r)})$  
 & $  e_{r-1,r}\otimes e_{01}\mapsto u^2 \cdot \{ $ 
 & $e_{r-1,r}\otimes e_{0,1,2}$ 
 \\ 
 $e_{r-1,r}\otimes e_0$ 
 & \hspace{\tabspac}\hspace{-0.3cm} $ -\tr[(f^r_i g^{(r-1)}_{i,j})^{-1}\nabla f^r_i\nabla g^{(r-1)}_{i,j}]$ 
 & \hspace{\tabspac}\hspace{-0.3cm} $\mapsto u^3\cdot \{- \tr[ (g^{(r)}_{i,j} f^r_j g^{(r-1)}_{j,k})^{-1}$
 \\  
 \hspace{\tabspac}\hspace{-0.3cm} $\mapsto u\cdot \tr[ (f_i^r)^{-1}\nabla f_i^r]$ 
 & \hspace{\tabspac}\hspace{-0.3cm}  $+ \tr[(g^{(r)}_{i,j} f^r_j)^{-1}\nabla g^{(r)}_{i,j} \nabla f^r_j]\}$ 
 &   \hspace{\tabspac}\hspace{1.4cm}$ \nabla g^{(r)}_{i,j} \nabla f^r_j \nabla g^{(r-1)}_{j,k}]$
 \\
 & 
 &  \hspace{\tabspac}$\pm$ two other $(1,2)$-shuffles$\}$
\end{tabular}
\]
Thus, $e_{j_0,\dots,j_p}\otimes e_{i_0,\dots, i_q}$ (for $p+q>0$) gets mapped
\begin{equation}\label{EQU:ej..xei...-->sum-tr}
e_{j_0,\dots,j_p}\otimes e_{i_0,\dots, i_q}\mapsto u^{p+q}\cdot \sum_{(p,q)\text{-shuffles }(\mu,\nu)} \sgn(\mu,\nu) \cdot \tr[((h_{p+q-1}\dots h_0)^{-1}\nabla h_{p+q-1}\dots \nabla h_0)],
\end{equation}
where for $m\in\{1,\dots, p\}$ the $h_{\mu_m}$ are ``vertical maps'' $f^{j_{m+1}}_i\circ\dots \circ f^{j_{m}+1}_i=f^{(j_{m+1},j_{m})}_{i}$ for some $i$, while all other $h_{\kappa}$ are ``horizontal maps'' $g^{(j_m)}_{i,i'}$ for some $m, i, i'$. 

Now, applying lemma \ref{lemma:Totandq} (and in particular lemma \ref{lemma:TotComplex}) shows that we only use the highest non-degenerate generator $e_{i_0,\dots, i_\ell}\in \Norm(\Z\Delt{\ell})_{-\ell}$ to map to $\Ohol^\bu(\CN \mc U_\ell)$ (since the images of the lower generators $e_{i_0,\dots, i_q}$ of $\Norm(\Z\Delt{\ell})$ are induced via the equalizer condition from $\Norm(\Z\Delt{q})$; see lemma \ref{lemma:TotComplex}). We thus land in
$$\DKSet(\tot(\Ohol^\bu(\CN \mc U))\ul)_n\cong \Chain^-\Big(\Norm(\Z\Delt{n}),\bigoplus_{\ell\geq 0} \Ohol^\bu\Big(\coprod_{i_0,\dots, i_\ell} U_{i_0,\dots, i_\ell}\Big)\ul\Big),$$
where a generator $e_{j_0,\dots, j_p}$ of $\Norm(\Z\Delt{n})$ maps to the $\ell$-component, by taking the image of $ e_{j_0,\dots, j_p}\otimes e_{i_0,\dots, i_\ell}$ under \eqref{EQU:ej..xei...-->sum-tr}, i.e.,
\begin{equation*}
e_{j_0,\dots,j_p}\mapsto u^{p+\ell}\cdot\sum_{(p,\ell)\text{-shuffles }(\mu,\nu)} \sgn(\mu,\nu) \cdot \tr[((h_{p+\ell-1}\dots h_0)^{-1}\nabla h_{p+\ell-1}\dots \nabla h_0)],
\end{equation*}
Now, for a given sequence of indices $i_0,\dots, i_\ell$ and a $(p,\ell)$-shuffle $(\mu,\nu)$, setting $0\leq \stp_1:=\mu_1\leq \stp_2:=\mu_2-1\leq \dots \leq \stp_p:=\mu_p-p+1\leq \ell$, it follows that the ``vertical maps'' are precisely at $h_{\mu_m}=f^{(j_m,j_{m-1})}_{i_{\stp_m}}$, while the ``horizontal maps'' are all other $h_\kappa=g_{i_\tau,i_{\tau-1}}^{(j_m)}$ for appropriate $m, \tau$ (cf. figure \ref{fig:IkIndices}). Note from \eqref{EQU:sgn(mu,nu)}, that the sign above is precisely $\sgn(\mu,\nu)=(-1)^{\stp_1+\dots+\stp_p}$. A final sign comes from the isomorphism $\tot(\Ohol^\bu(\CN \mc U))\ul\to\vC^\bu(\mc U, \Ohol^\bu)\ul$ as in lemma \ref{lemma:TotandCech}, where we multiply by a sign $(-1)^{\frac{(-p)\cdot ((-p)+1)}{2}}=(-1)^{\frac{p(p-1)}{2}}$, since the degree $|e_{j_0,\dots, j_p}|=-p$.

This shows that we get precisely the terms described in the proposition, and thus completes the proof.
\end{proof}

In particular, for $0$-simplices we have the following interpretation.
\begin{remark}
Since for any non-positively graded cochain complex $C\in \Chain^-$,  $\DKSet(C)_\bu$ is a simplicial set, whose $0$-simplices has as its underlying set $\DKSet(C)_0=C^0$, we see that
\[
\DKSet(\vC^\bu(\mc U, \Ohol^\bu)\ul)_0=(\vC^\bu(\mc U, \Ohol^\bu)[u])^0\cong \Big(\bigoplus_{\ell\geq 0} \vC^\ell(\mc U,\Ohol^\bu)\Big)^{even}
\]
Given a $0$-simplex of $\Tot(\HVB(\CN\mc U))$ via the data from proposition \ref{PROP:Tot(Hol(U))0}, this thus maps under \eqref{EQU:eval-circ-Tot(Ch)} to the \v{C}ech-de Rham forms $c\in \bigoplus_{\ell} \vC^\ell(\mc U,\Ohol^\ell)$, with
\[
c_{i_0,\dots, i_\ell}=\tr\Big((g_{i_\ell,i_0})^{-1}\nabla(g_{i_\ell,i_{\ell-1}})\dots \nabla(g_{i_1,i_0})\Big).
\]
These are, in fact, the classes that were given by O'Brian, Toledo, and Tong for the Chern character, cf. \cite[p.~244]{OTT1}. Recall that the cohomology of $\vC^\bu(\mc U,\Ohol^\bu)$ with the \v{C}ech differential (and zero as internal differential) is, by definition, the Hodge cohomology $H^\bu_{Hodge}(M)$ of $M\in \Ob(\CMan)$, i.e., that $H^\bu_{Hodge}(M):=H^\bu(\vC^\bu(\mc U,\Ohol^\bu),\delta)$.
\end{remark}

\section{Restricting to product bundles with connection $\partial$}\label{SEC:product-bundles}

In the previous section, we gave a map $\Tot(\CS(\CN\mc U)):\Tot(\HVB(\CN\mc U))\to \Tot(\OM(\CN\mc U))\stackrel \sim \to  \DKSet(\vC^\bu(\mc U, \Ohol^\bu)\ul)$. In this section, we define a variant of this maps on a new domain (namely $\CMan^{\Del^{op}}(\CN \mc U^{[\bu]},BG)$), which is capable of encoding any holomorphic vector bundle in some sense (see remark \ref{REM:compare-CMan(NU[.],BG)-Tot(HVB(NU))}), and we produce a commutative diagram
\begin{equation}\label{EQU:alternative-Tot(CS)}
\xymatrix{ 
\CMan^{\Del^{op}}(\CN \mc U^{[\bu]},BG) \ar^{}[r] \ar^{}[d] & \Tot(\HVB(\CN\mc U)) \ar^{\Tot(\CS(\CN\mc U))}[d]  \\ \DKSet(\vC^\bu(\mc U, \Ohol^\bu)\ul) & \Tot(\OM(\CN\mc U)) \ar^{}[l] }
\end{equation}
which represents the map $\Tot(\CS(\CN\mc U))$ on $\CMan^{\Del^{op}}(\CN \mc U^{[\bu]},BG)$.

\subsection{A sub-simplicial presheaf of $\HVB$}

In this section, we define the top horizontal map of \eqref{EQU:alternative-Tot(CS)}. We start by defining the cosimplicial simplicial manifold $\CN \mc U^{[\bu]}$.
\begin{definition}\label{DEF:U^[n]}
Let $\mc U=\{U_i\}_{i\in I}$ be a cover $\mc U\in \Cov_M$; see definition \ref{DEF:Cov+NU}. We first define the cosimplicial cover $\mc U^{[\bu]}:\Del\to \Cov_M$. For fixed $n$, define the the index set $I^{[n]}:=\{(i,j):i\in I, 0\leq j\leq n\}$. For convenience we will use the notation $i^{(j)}=(i,j)$ for the indices in $I^{[n]}$. Then define the cover $\mc U^{[n]}:=\{U_{i^{(j)}}\}_{i^{(j)}\in I^{[n]}}$ by letting the open set $U_{i^{(j)}}:=\al^{[n]}(i^{(j)}):=U_i$, where $\al^{[n]}:I^{[n]}\to \Open_M$ determines the cover as in definition \ref{DEF:Cov+NU}. In other words, $\mc U^{[n]}$ is obtained by taking $n+1$ many copies of the original cover $\mc U$. We can make this into a cosimplicial cover by assigning to a morphism $\rho:[n]\to [m]$ in $\Del$ the cover morphism $\mc U^{[\bu]}(\rho)\in \Cov_M(\mc U^{[n]},\mc U^{[m]})$ given by $f_\rho:I^{[n]}\to I^{[m]}, f_\rho(i^{(j)})= i^{(\rho(j))}$, for which clearly $\al^{[m]}(f_\rho(i^{(j)}))=U_i=\al^{[n]}(i^{(j)})$. Note that $\mc U^{[\bu]}(\rho\circ \rho')=\mc U^{[\bu]}(\rho)\circ \mc U^{[\bu]}(\rho')$, so that we obtain the claimed cosimplicial cover $\mc U^{[\bu]}:\Del\to \Cov_M$. 

Now, composing $\mc U^{[\bu]}:\Del\to \Cov_M$ with the \v{C}ech nerve $\CN:\Cov_M\to \CMan^{\Del^{op}}$ from defintiion \ref{DEF:Cov+NU} yields the cosimplicial simplicial complex manifold $\CN\mc  U^{[\bu]}:\Del\to  \CMan^{\Del^{op}}$.
\end{definition}

The next proposition gives a more conceptual way of thinking about $\CN \mc U^{[\bu]}$.
\begin{proposition}
There is a functor $\F:  \CMan^{\Del^{op}} \to  \left(\CMan^{\Del^{op}}\right)^{\Del}$ such that $\F(\CN \mc U)=\CN \mc U^{[\bu]}$.
\begin{proof}
Consider an object $X=X_\bu \in \CMan^{\Del^{op}}$ which assigns to each $[\ell] \in \Del$ a complex manifold $X_{\ell}$.  Then $\F(X)$ is a functor $\F(X)=\F(X)^\bu:\Del \to  \CMan^{\Del^{op}}, [n]\mapsto \F(X)^n$, where $\F(X)^n=\F(X)^n_\bu:\Del^{op}\to \CMan, [\ell]\mapsto \F(X)^n_\ell$ is defined to be
\[ \F(X)^n_\ell := \coprod\limits_{\sigma \in \Set([\ell], [n])} X_{\ell}. \] 
Here, $\Set([\ell], [n])$ denotes \emph{all} set maps from $[\ell]$ to $[n]$. For a morphism $\alpha \in \Del([k], [\ell])$, we define $\F(X)^n(\alpha): \F(X)^n_\ell\to \F(X)^n_k$ to be
\[ \begin{array}{rcl}
\coprod\limits_{\sigma \in \Set([\ell], [n])} X_{\ell} & \xrightarrow{ \F(X)^n (\alpha)}& \coprod\limits_{\tau \in \Set([k], [n])} X_{k} \\
& & \\
\left(\sigma  \in \Set([\ell], [n]) ,  x \in X_{\ell}\right) & \mapsto & \left(\sigma \circ \alpha   \in \Set([k], [n] ),  X(\alpha)(x) \in X_{k}\right).
\end{array}\]
With this definition, $\F(X)^n$ becomes a simplicial manifold.

Next, we show that  $\F(X)^\bu$ is indeed a functor $\F(X)^\bu: \Del \to \CMan^{\Del^{op}}$. In fact, for a morphism $\beta \in \Del( \lbrack n \rbrack,  \lbrack m \rbrack)$, define the natural transformation $\F(X)(\beta): \F(X)^n\to \F(X)^m$ of functors $\Del^{op}\to \CMan$ by the sequence of maps $\F(X)(\beta)_\ell: \F(X)^n_\ell\to \F(X)^m_\ell$,
\[ \begin{array}{rcl}%
\coprod\limits_{\sigma \in \Set([\ell], [n])} X_{\ell} & \xrightarrow{ \F(X)(\beta)_{\ell}}&  \coprod\limits_{\tau \in \Set([\ell], [m])} X_{\ell} \\
& & \\
\left( \sigma  \in \Set([\ell], [n]), x \in X_{\ell} \right) & \mapsto & \left( \beta \circ \sigma  \in \Set([\ell], [m] ),  x \in X_{\ell}\right).
\end{array}\]
Since the composition $\F(X)^n_\ell\stackrel {\F(X)^n(\al)}\longrightarrow \F(X)^n_k \stackrel {\F(X)(\beta)_k}\longrightarrow \F(X)^m_k$ is equal to the composition $\F(X)^n_\ell\stackrel {\F(X)(\beta)_\ell}\longrightarrow \F(X)^m_\ell \stackrel {\F(X)^m(\alpha)}\longrightarrow \F(X)^m_k$, this shows that $\F(X)(\beta)$ is indeed a natural transformation, and thus $\F(X):\Del \to \CMan^{\Del^{op}}$ is a functor.

Now, to see that we have a functor $\F:  \CMan^{\Del^{op}} \to  \left(\CMan^{\Del^{op}}\right)^{\Del}$, we must assign to a natural transformation $\phi: X \to Y$ of simplicial manifolds $X, Y \in \Ob \left( \CMan^{\Del^{op}}  \right)$, a natural transformation $\F(\phi): \F(X) \to \F(Y)$.  In detail, $\F(\phi)^n_\ell:\F(X)^n_\ell\to \F(Y)^n_\ell$ is defined by $\left(  \sigma  \in \Set([\ell], [n]),x \in X_{\ell} \right)\mapsto \left( \sigma  \in \Set([\ell], [n]),\phi_{\ell} (x) \in Y_{\ell} \right)$, which makes $\F(\phi)^n:\F(X)^n\to \F(Y)^n$ into a natural transformation, since $\F(X)^n_\ell\stackrel {\F(X)^n(\al)}\longrightarrow \F(X)^n_k \stackrel {\F(\phi)^n_k}\longrightarrow \F(Y)^n_k$ equals $\F(X)^n_\ell\stackrel {\F(\phi)^n_\ell}\longrightarrow \F(Y)^n_\ell \stackrel {\F(Y)^n(\alpha)}\longrightarrow \F(Y)^n_k$, and it gives an equality of the composed natural transformations $\F(X)^n\stackrel{\F(X)(\beta)}\longrightarrow \F(X)^m\stackrel{\F(\phi)^m}\longrightarrow \F(Y)^m$ and $\F(X)^n\stackrel{\F(\phi)^n}\longrightarrow \F(Y)^n\stackrel{\F(Y)(\beta)}\longrightarrow \F(Y)^m$, which can be seen by applying it to an object $[\ell]\in \Del^{op}$.

Finally, to prove the stated condition, note that
\begin{multline*}
 \F\left( \CN \mc U \right)^n_\ell 
 = \coprod\limits_{\sigma\in \Set( [\ell] , [n])} \CN \mc U_\ell
 =\coprod\limits_{\sigma\in \Set( [\ell] , [n])} \left( \coprod\limits_{i_0, \ldots, i_\ell \in I} U_{i_0, \ldots, i_\ell} \right)
 \\
  =  \coprod\limits_{\sigma\in \Set( [\ell] , [n])} \left( \coprod\limits_{i_0^{(\sigma(0))}, \ldots, i_\ell^{(\sigma(\ell))}\in I^{[n]}} U_{i_0^{(\sigma(0))}, \ldots, i_\ell^{(\sigma(\ell))}}\right)  
  = \coprod\limits_{i_0^{(j_0)}, \ldots, i_\ell^{(j_\ell)}\in I^{[n]}} U_{i_0^{(j_0)}, \ldots, i_\ell^{(j_\ell)}}
  =  \CN \mc U^{[n]}_\ell.
  \end{multline*}
Furthermore, the action of $\al\in \Del([k], [\ell])$ comes from mapping $U_{i_0,\dots,i_\ell}\to U_{i'_0,\dots, i'_k}$ as stated in definition \ref{DEF:Cov+NU}, while the action of $\beta\in \Del([n], [m])$ comes from mapping $U_{i_0^{(j_0)},\dots,i_\ell^{(j_\ell)}}\to U_{ i_0^{( \beta(j_0))},\dots,i_\ell^{(\beta(j_\ell))}}$ as described in definition \ref{DEF:U^[n]}. Thus, this yields the stated result, i.e., that $\F(\CN \mc U)^\bu = \CN \mc U^{[\bullet]}$.
\end{proof}
\end{proposition}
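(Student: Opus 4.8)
The plan is to reverse-engineer the definition of $\F$ from the required identity $\F(\CN\mc U)=\CN\mc U^{[\bu]}$ and then verify functoriality in each variable. First I would unravel $\CN\mc U^{[n]}_\ell$: since the open set $U_{i^{(j)}}$ attached to an index $i^{(j)}\in I^{[n]}$ equals $U_i$ and ignores the superscript $j\in[n]$ (Definition~\ref{DEF:U^[n]}), an $\ell$-fold intersection satisfies $U_{i_0^{(j_0)},\dots,i_\ell^{(j_\ell)}}=U_{i_0,\dots,i_\ell}$, with the tuple $(j_0,\dots,j_\ell)\in[n]^{\ell+1}$ entering only as a decoration. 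Reading $(j_0,\dots,j_\ell)$ as a set map $\sigma\colon[\ell]\to[n]$ with $\sigma(r)=j_r$ gives the identification $\CN\mc U^{[n]}_\ell=\coprod_{\sigma\in\Set([\ell],[n])}\CN\mc U_\ell$. This dictates the general definition: for $X=X_\bu\in\CMan^{\Del^{op}}$, set
\[
\F(X)^n_\ell:=\coprod_{\sigma\in\Set([\ell],[n])}X_\ell,
\]
the levelwise copower of $X$ with the (cosimplicial) simplicial set $n\mapsto\Set([-],[n])$ of \emph{all} set maps.

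Next I would install the two families of structure maps and check that they interchange. In the simplicial direction, a morphism $\alpha\in\Del([k],[\ell])$ acts by $(\sigma,x)\mapsto(\sigma\circ\alpha,X(\alpha)(x))$, which is functorial because $X$ is and composition is associative, so each $\F(X)^n$ is a simplicial manifold. In the cosimplicial direction, a morphism $\beta\in\Del([n],[m])$ acts by $(\sigma,x)\mapsto(\beta\circ\sigma,x)$, fixing the $X$-coordinate. The one genuine point is that these commute, i.e.\ that $\F(X)(\beta)$ is a natural transformation $\F(X)^n\to\F(X)^m$; this reduces to $(\beta\circ\sigma)\circ\alpha=\beta\circ(\sigma\circ\alpha)$, the associativity of set-map composition. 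Together these make $\F(X)\colon\Del\to\CMan^{\Del^{op}}$ a functor.

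I would then define $\F$ on morphisms: for $\phi\colon X\to Y$ in $\CMan^{\Del^{op}}$ put $\F(\phi)^n_\ell(\sigma,x)=(\sigma,\phi_\ell(x))$, applying $\phi$ on each summand while leaving the decorations $\sigma$ fixed. Compatibility with both the simplicial and cosimplicial maps follows from naturality of $\phi$ together with the fact that the decoration-bookkeeping is independent of the manifold coordinate, and preservation of identities and of composition is immediate. This yields the functor $\F\colon\CMan^{\Del^{op}}\to(\CMan^{\Del^{op}})^{\Del}$.

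Finally I would confirm $\F(\CN\mc U)=\CN\mc U^{[\bu]}$. The bidegreewise identification was obtained in the first step; it remains only to match the structure maps. The simplicial maps of $\CN\mc U^{[n]}$, induced by inclusions of intersections (Definition~\ref{DEF:Cov+NU}), correspond to $(\sigma,x)\mapsto(\sigma\circ\alpha,X(\alpha)(x))$, while the cosimplicial maps, induced by $\beta$ reindexing the superscripts (Definition~\ref{DEF:U^[n]}), correspond to $\sigma\mapsto\beta\circ\sigma$; both agree by inspection. The only real difficulty is organizational: one must keep the simplicial direction (precomposition in $\ell$) cleanly separated from the cosimplicial direction (postcomposition in $n$) and verify the interchange without direction errors. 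Conceptually this interchange is nothing more than associativity of composition of set maps, and framing $\F(X)$ as the levelwise copower with $\Set([-],[\bu])$ is what makes it transparent.
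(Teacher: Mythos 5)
Your proposal is correct and follows essentially the same route as the paper: you define $\F(X)^n_\ell=\coprod_{\sigma\in\Set([\ell],[n])}X_\ell$ with precomposition by $\alpha$ (plus $X(\alpha)$) in the simplicial direction, postcomposition by $\beta$ in the cosimplicial direction, and $\phi$ acting summandwise on morphisms, then identify $\CN\mc U^{[n]}_\ell$ with this copower exactly as the paper does. The verifications (interchange via associativity of composition, matching the structure maps from definitions \ref{DEF:Cov+NU} and \ref{DEF:U^[n]}) coincide with the paper's argument.
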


Next, we define the simplicial manifold $BG$.
\begin{definition}\label{DEF:BG}
Let $G$ be a complex Lie group. We define a simplicial complex manifold $BG\in \Ob(\CMan^{\Del^{op}})$ (also denoted by $[*/G]$) by setting the $n$-simplices to be $BG_n=G^{\times n}$, i.e., we have $BG_0=\{*\}, BG_1=G, BG_2=G\times G, BG_3=G\times G\times G, \dots$. The face maps $d_j:G^{\times n}\to G^{\times (n-1)}$ for $0<j<n$ are $d_j(g_1,\dots, g_n)=(g_1,\dots, g_j\cdot g_{j+1},\dots, g_n)$, while $d_0(g_1,\dots, g_n)=(g_2,\dots, g_n)$ and $d_n(g_1,\dots, g_n)=(g_1,\dots, g_{n-1})$. The degeneracies $s_j:G^{\times (n-1)}\to G^{\times n}$ are given by $s_j(g_1,\dots, g_{n-1})=(g_1,\dots, g_j,1,g_{j+1},\dots, g_{n-1})$, where $0\leq j\leq n-1$.
\end{definition}
In the following, we will be mainly interested in the case $G=GL(n, \C)$. 

Since $BG$ and $\CN\mc  U^{[n]}$ (for fixed $n$) are simplicial manifolds, we can consider the set of morphisms between these simplicial manifolds, i.e., $\CMan^{\Del^{op}}(\CN\mc  U^{[n]},BG)$. Now, varying $n$, this becomes a simplicial set $\CMan^{\Del^{op}}(\CN\mc  U^{[\bu]},BG)\in \sSet$ by setting the $n$-simplices to be $\CMan^{\Del^{op}}(\CN\mc  U^{[n]},BG)$. We now describe these $n$-simplices more explicitly.
 
 \begin{lemma}\label{LEM:n-simplex-inCMAN(NU,BG)}
 A simplicial manifold map $\CMan^{\Del^{op}}(\CN\mc  U^{[n]},BG)$ is precisely given by $n+1$ many transition functions $g^{(0)}_{i,j}:U_{i,j}\to G, \dots, g^{(n)}_{i,j}:U_{i,j}\to G$, each satisfying the cocycle condition $g^{(p)}_{i,j}|_{U_{i,j,k}}\cdot g^{(p)}_{j,k}|_{U_{i,j,k}}=g^{(p)}_{i,k}|_{U_{i,j,k}}$ and $g^{(p)}_{i,i}=1$ for any $p=0,\dots, n$, together with $n$ maps $f^{1}_i:U_i\to G, \dots, f^{n}_i:U_i\to G$, each commuting with the transition functions via $f_i^{p}|_{U_{i,j}}\cdot g_{i,j}^{(p-1)}=g_{i,j}^{(p)}\cdot f_j^{p}|_{U_{i,j}}$.
 \end{lemma}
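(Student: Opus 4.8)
The plan is to prove the lemma in two stages: first reduce a map of simplicial manifolds into $BG$ to the single piece of data of a normalized $G$-valued $1$-cocycle on the doubled index set, and then repackage that cocycle, using its multiplicativity, into the horizontal transition functions $g^{(p)}$ and the vertical maps $f^p$.

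For the first stage I would use that $BG$ is the nerve of the one-object groupoid with morphism space $G$, so that it is $2$-coskeletal and its spine maps $BG_m\to G^{\times m}$ are isomorphisms of complex manifolds. Consequently a simplicial manifold map $\phi\colon \CN\mc U^{[n]}\to BG$ is determined by its degree-$1$ component $\phi_1\colon (\CN\mc U^{[n]})_1\to G$, and $\phi_1$ may be prescribed freely subject to exactly the two conditions forced in degree $\leq 2$: the normalization $\phi_1\circ s_0=1$ (since $BG_0\to BG_1$ sends $*\mapsto 1$) and the multiplicativity $\phi_1\circ d_1=(\phi_1\circ d_2)\cdot(\phi_1\circ d_0)$ on $(\CN\mc U^{[n]})_2$ (since $d_1(g_1,g_2)=g_1g_2$ in $BG$, while $d_2$ and $d_0$ read off the two spine edges). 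All higher simplicial identities then hold automatically by $2$-coskeletality.

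Next I would unpack this on the \v{C}ech nerve. Since $(\CN\mc U^{[n]})_1=\coprod_{a^{(p)},b^{(q)}\in I^{[n]}}U_{a,b}$, the datum $\phi_1$ is exactly a family of holomorphic maps $\Phi^{(a^{(p)},b^{(q)})}\colon U_{a,b}\to G$. Normalization becomes $\Phi^{(a^{(p)},a^{(p)})}=1$, and, because $d_2,d_0,d_1$ on $(\CN\mc U^{[n]})_2$ drop the last, first, and middle index, multiplicativity becomes
$$\Phi^{(a^{(p)},c^{(r)})}=\Phi^{(a^{(p)},b^{(q)})}\cdot\Phi^{(b^{(q)},c^{(r)})}\quad\text{on }U_{a,b,c}.$$
I then set $g^{(p)}_{a,b}:=\Phi^{(a^{(p)},b^{(p)})}$ (same level) and $f^p_a:=\Phi^{(a^{(p)},a^{(p-1)})}$ (same cover index, adjacent levels). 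Restricting the cocycle relation to same-level triples yields the stated cocycle for each $g^{(p)}$ together with $g^{(p)}_{a,a}=1$, while applying it to the two triples $(a^{(p)},a^{(p-1)},b^{(p-1)})$ and $(a^{(p)},b^{(p)},b^{(p-1)})$, both of which compute $\Phi^{(a^{(p)},b^{(p-1)})}$, produces the compatibility $f^p_a\,g^{(p-1)}_{a,b}=g^{(p)}_{a,b}\,f^p_b$.

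It remains to show $\phi_1\mapsto(\{g^{(p)}\},\{f^p\})$ is a bijection onto the data satisfying those two relations, i.e. that every normalized cocycle is reconstructed from, and consistent with, its horizontal and vertical generators. The reconstruction is forced by the cocycle: for $p\geq q$ one gets $\Phi^{(a^{(p)},a^{(q)})}=f^p_a f^{p-1}_a\cdots f^{q+1}_a=:V^a_{p,q}$ (the inverse for $p<q$, and $1$ for $p=q$), and then $\Phi^{(a^{(p)},b^{(q)})}=V^a_{p,q}\cdot g^{(q)}_{a,b}$. I expect the main obstacle to be the well-definedness and full-cocycle check for this reconstructed $\Phi$: one must verify that moving vertically-then-horizontally agrees with horizontally-then-vertically, i.e. $V^a_{p,q}\,g^{(q)}_{a,b}=g^{(p)}_{a,b}\,V^b_{p,q}$, and that the resulting $\Phi$ satisfies the cocycle identity on an arbitrary triple $(a^{(p)},b^{(q)},c^{(r)})$. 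I would dispatch the first by an induction on $p-q$ that uses the compatibility square $f^r_a g^{(r-1)}_{a,b}=g^{(r)}_{a,b}f^r_b$ to push a same-level $g$ up through the $f$'s one level at a time, and the second by rewriting each factor in the vertical-then-horizontal normal form and collapsing via this L-shape identity, the concatenation $V^a_{p,q}V^a_{q,r}=V^a_{p,r}$, and the horizontal cocycles for the $g^{(p)}$ — so that no relations beyond the two asserted in the lemma are ever needed. This yields the claimed equivalence.
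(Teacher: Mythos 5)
Your proposal is correct and follows essentially the same route as the paper: reduce the simplicial map to its degree-$1$ data subject to normalization and the triple-overlap multiplicativity (the paper checks this directly rather than citing $2$-coskeletality of $BG$, but the content is identical), then read off $g^{(p)}_{i,j}=h_{i^{(p)},j^{(p)}}$ and $f^p_i=h_{i^{(p)},i^{(p-1)}}$ and derive the two stated relations. Your reconstruction of an arbitrary $\Phi^{(a^{(p)},b^{(q)})}$ from the generators, including the well-definedness check via the L-shape identity, is a more carefully spelled-out version of the paper's closing remark that all $h_{i_0^{(j_0)},i_1^{(j_1)}}$ are products of the $g^{(p)}_{i,j}$, the $f^p_i$, and their inverses.
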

 \begin{proof}
 A simplicial manifold map $h\in\CMan^{\Del^{op}}(\CN\mc  U^{[n]},BG)$ is a map for each $k$-simplex, i.e., $(\CN\mc U^{[n]})_k\to BG_k$, or 
 $$
 \coprod_{i_0^{(j_0)},\dots, i_k^{(j_k)}\in I^{[n]}} h_{i_0^{(j_0)},\dots, i_k^{(j_k)}}:\coprod_{i_0^{(j_0)},\dots, i_k^{(j_k)}\in I^{[n]}}U_{i_0^{(j_0)},\dots, i_k^{(j_k)}}\to G^{\times k}.$$
 For $k=0$, this is vacuous, for $k=1$, we get $h_{i_0^{(j_0)}, i_1^{(j_1)}}: U_{i_0^{(j_0)}, i_1^{(j_1)}}\to G$, for $k=2$, we get $h_{i_0^{(j_0)}, i_1^{(j_1)}, i_2^{(j_2)}}: U_{i_0^{(j_0)}, i_1^{(j_1)}, i_2^{(j_2)}}\to G\times G$, etc. Since $h$ respects the face maps, we see that $h_{i_0^{(j_0)}, i_1^{(j_1)}, i_2^{(j_2)}}=(h_{i_0^{(j_0)}, i_1^{(j_1)}}|_V, h_{ i_1^{(j_1)}, i_2^{(j_2)}}|_V)$, where $V=U_{i_0^{(j_0)},i_1^{(j_1)},i_2^{(j_2)}}$, as well as $h_{i_0^{(j_0)}, i_1^{(j_1)}}|_V\cdot h_{ i_1^{(j_1)}, i_2^{(j_2)}}|_V=h_{ i_0^{(j_0)}, i_2^{(j_2)}}|_V$. This shows, in particular, that $h_{i_0^{(j_0)}, i_1^{(j_1)}, i_2^{(j_2)}}$ is determined by the $h_{i_0^{(j_0)}, i_1^{(j_1)}}$ and a similar arguments shows that furthermore all of the maps $h_{i_0^{(j_0)},\dots, i_k^{(j_k)}}=(h_{i_0^{(j_0)}, i_1^{(j_1)}}|_W, h_{i_1^{(j_1)}, i_2^{(j_2)}}|_W, \dots, h_{i_{k-1}^{(j_{k-1})}, i_k^{(j_k)}}|_W)$ are determined by the $h_{i_0^{(j_0)}, i_1^{(j_1)}}$ restricted to $W=U_{i_0^{(j_0)},\dots, i_k^{(j_k)}}$. Moreover, $h_{i_0^{(j_0)}, i_1^{(j_1)}}|_V\cdot h_{ i_1^{(j_1)}, i_2^{(j_2)}}|_V=h_{ i_0^{(j_0)}, i_2^{(j_2)}}|_V$ is the only condition that is imposed on the functions $h_{i_0^{(j_0)}, i_1^{(j_1)}}$ besides $h_{i^{(j)},i^{(j)}}=1$ coming from the degeneracy $\sigma_0:[1]\to [0]$.
 
 Now, for $0\leq p\leq n$, denote by $g^{(p)}_{i,j}:=h_{i^{(p)}, j^{(p)}}:U_{i,j}\to G$, and, for $1\leq p\leq n$, denote by $f_i^p:=h_{i^{(p)},i^{(p-1)}}:U_i\to G$. Then, $g^{(p)}_{i,j}|_{U_{i,j,k}}\cdot g^{(p)}_{j,k}|_{U_{i,j,k}}=g^{(p)}_{i,k}|_{U_{i,j,k}}$ and $f_i^{p}|_{U_{i,j}}\cdot g_{i,j}^{(p-1)}=h_{i^{(p)},i^{(p-1)}}|_{U_{i,j}}\cdot h_{i^{(p-1)},j^{(p-1)}}=h_{i^{(p)}j^{(p-1)}}=h_{i^{(p)},j^{(p)}}\cdot h_{j^{(p)},j^{(p-1)}}|_{U_{i,j}}=g_{i,j}^{(p)}\cdot f_j^{p}|_{U_{i,j}}$, so that these functions satisfy the stated conditions. On the other hand, all $h_{i_0^{(j_0)}, i_1^{(j_1)}}$ can be written as products of the $g^{(p)}_{i,j}$ and $f^p_i$ and their inverses, e.g. for $p<q$, we have $h_{i^{(p)},i^{(q)}}=(f^{p+1}_i)^{-1}\cdot(f^{p+2}_i)^{-1}\cdot \ldots\cdot (f^{q}_i)^{-1}$, etc.
 \end{proof}
 
We want to define a simplicial set map from $\CMan^{\Del^{op}}(\CN\mc  U^{[\bu]},BG)$ to $\Tot(\HVB(\CN\mc U))$. To this end, we will use the described $n$-simplices of $\Tot(\HVB(\CN\mc U))$ from proposition \ref{PROP:Tot(Hol(U))n}.

\begin{definition} \label{DEF:beta}
Let $G=GL(n,\C)$. Then, we define a map $$\beta:\CMan^{\Del^{op}}(\CN \mc U^{[\bu]},BG) \to\Tot(\HVB(\CN\mc U))$$ which assign to the data of an $n$-simplex in the domain, i.e., $g^{(0)}_{i,j}, \dots, g^{(n)}_{i,j}$ and $f_i^1,\dots, f_i^n$ from lemma \ref{LEM:n-simplex-inCMAN(NU,BG)}, the data of an $n$-simplex in the range from proposition \ref{PROP:Tot(Hol(U))n} as follows. Let $E_i^{(0)}=U_i\times \C^n\to U_i, \dots, E_i^{(n)}=U_i\times \C^n\to U_i$ be the product bundles with connections $\nabla_i^{(0)}=\partial, \dots, \nabla_i^{(n)}=\partial$, where $\partial=\sum_{\ell=1}^n dz_\ell\frac{\partial}{\partial z_\ell}$. This makes the $g^{(p)}_{i,j}$ maps of bundles $g^{(p)}_{i,j}:E^{(p)}_j|_{U_{i,j}}\to E^{(p)}_i|_{U_{i,j}}$ as well as the $f_i^p$ maps of bundles $f^p_i:E_i^{(p-1)}\to E_i^{(p)}$.
 \end{definition}
 Then, we claim:
 \begin{proposition}
$\beta:\CMan^{\Del^{op}}(\CN \mc U^{[\bu]},BG) \to\Tot(\HVB(\CN\mc U))$ is a map of simplicial sets.
 \end{proposition}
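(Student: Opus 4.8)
The plan is to show that $\beta$ is a natural transformation of functors $\Del^{op}\to\mc{S}et$. Since $\Del^{op}$ is generated by the coface and codegeneracy morphisms, it suffices to check that $\beta$ commutes with every face map $d_i$ and every degeneracy map $s_i$ for $0\le i\le n$. By definition \ref{DEF:beta}, $\beta$ is already well-defined on each simplicial level: a collection of $GL(n,\C)$-valued functions $g^{(p)}_{i,j}$ and $f^p_i$ satisfying the cocycle and commutation relations of lemma \ref{LEM:n-simplex-inCMAN(NU,BG)} is literally the same data as a collection of product-bundle isomorphisms $g^{(p)}_{i,j}:E^{(p)}_j|_{U_{i,j}}\to E^{(p)}_i|_{U_{i,j}}$ and $f^p_i:E^{(p-1)}_i\to E^{(p)}_i$ satisfying the relations of proposition \ref{PROP:Tot(Hol(U))n}, where $E^{(p)}_i=U_i\times\C^n$ carries the connection $\partial$; and these two sets of relations coincide. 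Hence only the two simplicial structures need to be compared.

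First I would describe the structure maps on the domain. The simplicial set $\CMan^{\Del^{op}}(\CN\mc U^{[\bu]},BG)$ is obtained by applying $\CMan^{\Del^{op}}(-,BG)$ to the cosimplicial simplicial manifold $\CN\mc U^{[\bu]}$, so its structure maps are precomposition: for $\rho:[m]\to[n]$ in $\Del$, an $n$-simplex $h$ is sent to $h\circ\CN\mc U^{[\bu]}(\rho)$. Using the identifications $g^{(p)}_{i,j}=h_{i^{(p)},j^{(p)}}$ and $f^p_i=h_{i^{(p)},i^{(p-1)}}$ from the proof of lemma \ref{LEM:n-simplex-inCMAN(NU,BG)}, the formula $h_{i^{(b)},i^{(a)}}=f^b_i\circ\dots\circ f^{a+1}_i$ for $a\le b$, and the relabeling $f_\rho(i^{(j)})=i^{(\rho(j))}$ from definition \ref{DEF:U^[n]}, a direct computation shows that $\rho^*$ carries $(g^{(p)},f^p)$ to $\tilde g^{(q)}=g^{(\rho(q))}$ and $\tilde f^{q}=f^{\rho(q)}_i\circ\dots\circ f^{\rho(q-1)+1}_i$. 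Specializing to the coface $\delta^i:[n-1]\to[n]$ and codegeneracy $\sigma^i:[n+1]\to[n]$, this says: $d_i$ deletes $g^{(i)}$ and replaces the adjacent pair $f^i,f^{i+1}$ by their composite $f^{i+1}\circ f^i$ (deleting the terminal $f$ and $g$ when $i=0$ or $i=n$), while $s_i$ duplicates $g^{(i)}$ and inserts an identity among the $f$'s in position $i+1$.

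Next I would identify the structure maps on the range. In the description of proposition \ref{PROP:Tot(Hol(U))n}, the $\Delt{n}$-direction of $\Tot(\HVB(\CN\mc U))$ is carried by the nondegenerate $n$-simplex $E^{(0)}_i\xrightarrow{f^1_i}\dots\xrightarrow{f^n_i}E^{(n)}_i$ of the nerve $\Nerve(\HVBnabla(\CN\mc U_0))$ appearing at the $\ell=0$ component, together with the labelling of the superscripts of the transition functions $g^{(p)}$. Consequently the face and degeneracy maps in this direction act by the nerve's own face and degeneracy maps on that $n$-simplex — composing adjacent $f^p_i$'s and inserting identity morphisms — with the simultaneous deletion or duplication of the corresponding transition function $g^{(i)}$ and bundles $E^{(i)}_\bu$. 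This is exactly the combinatorial prescription found on the domain.

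Finally I would conclude. Since $\beta$ leaves the combinatorial data untouched, merely reinterpreting each $GL(n,\C)$-valued $g^{(p)}_{i,j}$ and $f^p_i$ as an isomorphism of the product bundles $U_i\times\C^n$ with connection $\partial$, and since composition of $GL(n,\C)$-valued functions agrees with composition of the associated product-bundle maps, the group identity maps to the identity bundle morphism, and deletion or duplication of a superscript corresponds to deletion or duplication of an identical copy of $U_i\times\C^n$, the operations $d_i$ and $s_i$ commute with $\beta$ strictly. Hence $\beta$ is a map of simplicial sets. I expect the main obstacle to be the bookkeeping: one must confirm that the end/totalization structure on the range really does reduce, in the $\Delt{n}$-direction, to the nerve's simplicial operations on the sequence of $f$'s together with the relabeling of the $g^{(p)}$ — which is precisely what the proof of proposition \ref{PROP:Tot(Hol(U))n} unwinds — so that it lines up term by term with the cosimplicial relabeling structure of $\mc U^{[\bu]}$ on the domain.
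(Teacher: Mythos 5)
Your proposal is correct and follows essentially the same route as the paper: reduce to faces and degeneracies, describe their action on both sides in terms of the combinatorial data $(g^{(p)}_{i,j},f^p_i)$, and observe that $\beta$ merely reinterprets this data so the operations match term by term. Your unwinding is in fact slightly more explicit than the paper's (e.g.\ spelling out that $d_i$ composes the adjacent $f^i,f^{i+1}$ and that a general $\rho$ sends $f^q$ to $f^{\rho(q)}_i\circ\dots\circ f^{\rho(q-1)+1}_i$), but the argument is the same.
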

\begin{proof}
We need to show that $\beta$ commutes with the application of a morphism $\rho:[n]\to [m]$ of $\Del$.

In fact, a face map $\delta_j:[n-1]\to [n]$ in $\Del$ induces a simplicial set map $\CMan^{\Del^{op}}(\CN \mc U^{[n]},BG)\to \CMan^{\Del^{op}}(\CN \mc U^{[n-1]},BG)$ by forgetting the open sets $U_{i^{(j)}}$ of the $j$th component in $\mc U^{[n]}$, while degeneracies $\sigma_j:[n]\to [n-1]$ induce $\CMan^{\Del^{op}}(\CN \mc U^{[n-1]},BG)\to \CMan^{\Del^{op}}(\CN \mc U^{[n]},BG)$, which repeat the open sets $U_{i^{(j)}}$ of the $j$th component in $\mc U^{[n]}$ (with the unit $1$ for the transition function).

On the other hand, for the totalization (see definition \ref{DEF:Totalization}), a face map $\delta_j:[n-1]\to [n]$ maps $\prod_\ell \sSet(\Delt{\ell}\times \Delt{n},\HVB((\CN\mc U)_\ell))\to \prod_\ell \sSet(\Delt{\ell}\times \Delt{n-1},\HVB((\CN\mc U)_\ell))$ by pre-composing by $\Delt{n-1}\to \Delt{n}$, which under the interpretation from proposition \ref{PROP:Tot(Hol(U))n} forgets the $j$th bundles $E_i^{(j)}\to U_i$ (since the $E_i^{(p)}$ are the images for $\ell=0$, i.e., the images under the map $\Delt{0}\times \Delt{n}\to \HVB(\CN \mc U_0)$). Similarly, $\sigma_j:[n]\to [n-1]$ gives a map $\prod_\ell \sSet(\Delt{\ell}\times \Delt{n-1},\HVB((\CN\mc U)_\ell))\to \prod_\ell \sSet(\Delt{\ell}\times \Delt{n},\HVB((\CN\mc U)_\ell))$ by pre-composing with $\Delt{n}\to \Delt{n-1}$, which interpreted as in proposition \ref{PROP:Tot(Hol(U))n} (i.e., for $\ell=0$) repeats the $j$th bundle $E^{(j)}_i\to U_i$.

Since $\beta$ maps the $p$th component in the domain to the $p$th component in the range, and morphisms of $\Del$ act in the same way in the domain and range (forgetting the $j$ component for $\delta_j$, and repeating the $j$th component for $\sigma_j$), we see that $\beta$ is indeed a map of simplicial sets.
\end{proof}

 \begin{remark}\label{REM:compare-CMan(NU[.],BG)-Tot(HVB(NU))}
The image of $\beta$ does not give all $0$-simplices of $\Tot(\HVB(\CN\mc U))$, since, by construction (definition \ref{DEF:beta}), we only get trivial product bundles with connection $\partial$ on $U_i$. In fact, if we define $\HVB^{triv}$ to consist only of trivial product bundles with fiber $\C^n$ and connection $\partial$, then $\HVB^{triv}$ is a sub-simplicial presheaf of $\HVB$ so that $\beta:\CMan^{\Del^{op}}(\CN \mc U^{[\bu]},BG) \to\Tot(\HVB^{triv}(\CN\mc U))$ is an isomorphism.

However, every holomorphic vector bundle $E\to M$ together with a cover $\mc U$ and a choice of local trivializations over $\mc U$ can be represented as a $0$-simplex of $\CMan^{\Del^{op}}(\CN \mc U^{[\bu]},BG)$ via lemma \ref{LEM:n-simplex-inCMAN(NU,BG)} for $n=0$ (cf. lemma \ref{LEM:E-given-local} for $0$-simplices of $\Tot(\HVB(\CN\mc U))$). Therefore, diagram \eqref{EQU:alternative-Tot(CS)} will provide an alternative for calculating $\CS$ of $E$ with the choice of $\partial$ for the local connections.
\end{remark}

\subsection{A combinatorial integration over the fiber}

In order to define the left vertical map in equation \eqref{EQU:alternative-Tot(CS)}, we need an ``integration over the fiber'' for \v{C}ech cochains, i.e., a suitable map $\intfib{k}:\vC^\bu(\mc U^{[k]},\mc A)\to \vC^\bu(\mc U, \mc A)$, which we define in this section.

We start with some notation on indices. For $k\geq 0$, we ``split'' the set $\{0,\dots, q\}$ into $k+1$ levels by choosing positions $0\leq \stp_1\leq \stp_2\leq\dots\leq \stp_{k}\leq q$ where a step of a level occurs. More precisely, we make the following definition.
\begin{definition}
A $k$-step position of $\{0,\dots, q\}$ (or a $k$-step or simply a step) is defined to be a sequence of natural numbers $ 0\leq \stp_1\leq \stp_2\leq\dots\leq \stp_{k}\leq q$. The set of $k$-steps is denoted by
\[
S_k(q):=\{(\stp_1,\dots, \stp_{k})\in \N_0^{k}: 0\leq \stp_1\leq \stp_2\leq\dots\leq \stp_{k}\leq q\}.
\]
\end{definition}

Now, let $\mc U=\{U_i\}_{i\in I}$ be a cover of a manifold $M$, and consider a sequence of indices $(i_0,\dots, i_q)\in I^{q+1}$. (In all of the cases of interest below, these will be the indices applied to some element $c=\{c_{i_0,\dots, i_q}\}$ in some \v{C}ech complex.)  Using a $k$-step position $ 0\leq \stp_1\leq \stp_2\leq\dots\leq \stp_{k}\leq q$, we can split $(i_0,\dots, i_q)$ into $k+1$ subsequences
\begin{equation*}
(i_0,\dots, i_{\stp_1}), (i_{\stp_1},\dots, i_{\stp_2}), \dots, (i_{\stp_{k}},\dots, i_{q})
\end{equation*}

\begin{example}
Let $\mc A$ be a presheaf of non-negatively graded cochain complexes, such as e.g. the the sheaf of holomorphic functions $\mc A=\Ohol^\bu$. Let $\mc U=\{U_i\}_{i\in I}$ be a cover of a manifold $M$.
Recall the \v{C}ech complex $\vC^\bu(\mc U,\mc A)$ from definition \ref{DEF:Presheaf-Cech}. Given elements $c_0,\dots, c_k\in \vC^\bu(\mc U,\mc A)$, their product can be defined as
\begin{equation}\label{EQU:c1...ck}
(c_0\cdot \ldots\cdot c_k)_{i_0,\dots, i_q}
= \sum_{
\tiny \begin{matrix}
(\stp_1,\dots, \stp_{k})\\
\in S_{k}(q)
\end{matrix}
} (c_0)_{i_0,\dots, i_{\stp_1}}\cdot  (c_1)_{i_{\stp_1},\dots, i_{\stp_2}}\cdot\ldots\cdot (c_k)_{i_{\stp_{k}},\dots, i_{q}}.
\end{equation}
\end{example}

\begin{example}
Denote by $\mc U^{[k]}:=\{U_{i^{(j)}}\}_{i^{(j)}\in I^{[k]}}$ the $k$-fold cover from definition \ref{DEF:U^[n]}. For each sequence of indices $(i_0,\dots, i_q)$ of $I$, and for each choice of $k$-step positions $(\stp_1,\dots, \stp_k)\in S_k(q)$, there is an induced sequence of indices $(j_0,\dots,j_{q+k})$ of $I^{[k]}$ given by (cf. figure \ref{fig:IkIndices})
\begin{equation}\label{EQU:j's-from-i's}
\left\{
\begin{matrix}
 j_0=i_0^{(0)}, \dots, j_{\stp_1}=i_{\stp_1}^{(0)},\\
 j_{\stp_1+1}=i_{\stp_1}^{(1)}, \dots, j_{\stp_2+1}=i_{\stp_2}^{(1)},\\
  \dots \\
 j_{\stp_{m}+m}=i_{\stp_{m}}^{(m)}, \dots, j_{\stp_{m+1}+m}=i_{\stp_{m+1}}^{(m)},\\
 \dots \\
 j_{\stp_k+k}=i_{\stp_k}^{(k)}, \dots, j_{q+k}=i_{q}^{(k)}
 \end{matrix}
 \right.
\end{equation}

\begin{figure}[h]
\centering
\begin{tikzpicture}[scale=0.44,  every node/.style={scale=0.8}]
%[thick,scale=0.6, every node/.style={transform shape}]
%help lines grid

%\draw[help lines] (0.1,0.1) grid (9.9,9.9);

%draw and label k-lines 0-4
%\foreach \x in {0,1,...,5}
%{ 
 % \draw[thick][dotted] ($(-1,{(5-\x)*2})$) -- ($(30,{(5-\x)*2})$);
%\node[left= 0.1cm] at ($(-1,{(5-\x)*2})$) {\x};}

%k=0
  \draw[thick][dotted] ($(-1,{(5-0)*2})$) -- ($(0,{(5-0)*2})$);
\node[left= 0.1cm] at ($(-1,{(5-0)*2})$) {0};
%k=1
  \draw[thick][dotted] ($(-1,{(5-1)*2})$) -- ($(6,{(5-1)*2})$);
\node[left= 0.1cm] at ($(-1,{(5-1)*2})$) {1};
%k=2
  \draw[thick][dotted] ($(-1,{(5-2)*2})$) -- ($(14,{(5-2)*2})$);
\node[left= 0.1cm] at ($(-1,{(5-2)*2})$) {2};
%k=3
  \draw[thick][dotted] ($(-1,{(5-3)*2})$) -- ($(14,{(5-3)*2})$);
\node[left= 0.1cm] at ($(-1,{(5-3)*2})$) {3};
%k=3
  \draw[thick][dotted] ($(-1,{(5-4)*2})$) -- ($(22,{(5-4)*2})$);
\node[left= 0.1cm] at ($(-1,{(5-4)*2})$) {4};
%k=5
  \draw (-1,0) -- (30,0);
\node[left= 0.1cm] at (-1,0) {$k=5$};

%draw and label i-lines 0-10
\foreach \x in {0,1,...,2}
{ 
  \draw[thick][dotted] ($(2*\x,-2)$) -- ($(2*\x,10)$);
\node[below= 0.1cm] at ($(2*\x,-2)$) {${\x}$};}

\foreach \x in {3,4,5,6}
{ 
  \draw[thick][dotted] ($(2*\x,-2)$) -- ($(2*\x,8)$);
\node[below= 0.1cm] at ($(2*\x,-2)$) {${\x}$};}

\foreach \x in {7,8,9,10}
{ 
  \draw[thick][dotted] ($(2*\x,-2)$) -- ($(2*\x,4)$);
\node[below= 0.1cm] at ($(2*\x,-2)$) {${\x}$};}

\foreach \x in {11}
{ 
  \draw[thick][dotted] ($(2*\x,-2)$) -- ($(2*\x,2)$);
\node[below= 0.1cm] at ($(2*\x,-2)$) {${\x}$};}

\foreach \x in {12,13,14,14}
{ 
  \draw[thick][dotted] ($(2*\x,-2)$) -- ($(2*\x,0)$);
\node[below= 0.1cm] at ($(2*\x,-2)$) {${\x}$};}

  \draw[thick][dotted] ($(2*15,-2)$) -- ($(2*15,0)$);
\node[below= 0.1cm, xshift=0.3cm] at ($(2*15,-2)$) {$15 = q$};

\node[below= 0.1cm] at (6,-3) {$\stp_1 = 3$};
\node[below= 0.1cm] at (14,-3) {$\stp_2 = 7$};
\node[below= 0.1cm] at (14,-4) {$\stp_3 = 7$};
\node[below= 0.1cm] at (22,-3) {$\stp_4 = 11$};
\node[below= 0.1cm] at (24,-4) {$\stp_5 = 12$};

%draw thick path line
\draw[name path=s1, thick] (0,10) -- (6,10) -- (6,8) -- (14,8) -- (14, 4) -- (22,4) -- (22,2) -- (24,2) -- (24,0) -- (30,0);

\foreach \x in {0,1,..., 3}
{ 
\fill[black] ($(2*\x,10)$) circle (5pt);
\node[above right, xshift=0.3cm, yshift=-0.15cm, rotate=45] at ($(2*\x,10)$) {\contour{white}{\small $i^{(0)}_{\x} = j_{\x}$}};}

\fill[black] (6,8) circle (5pt);
\node[above right, xshift=0.4cm, yshift=-0.15cm, rotate=45]  at (6,8)  {\contour{white}{\small $i^{(1)}_{3} = j_{4}$}};

\foreach \x in {4,5,...,7}
{ 
 \pgfmathparse{int(round(\x+1) )}
       \let\y\pgfmathresult
       \fill[black] ($(2*\x,8)$) circle (5pt);
\node[above right, xshift=0.4cm, yshift=-0.15cm, rotate=45] at ($(2*\x,8)$) {\contour{white}{\small $i^{(1)}_{\x} = j_{\y}$}};}

\fill[black] (14,6) circle (5pt);
\node[above right, xshift=0.4cm, yshift=-0.15cm, rotate=45]  at (14,6)  {\contour{white}{$i^{(2)}_{7} = j_{9}$}};

\fill[black] (14,4) circle (5pt);
\node[above right, xshift=0.4cm, yshift=-0.15cm, rotate=45]  at (14,4)  {\contour{white}{$i^{(3)}_{7} = j_{10}$}};

\foreach \x in {8,9,10,11}
{ 
 \pgfmathparse{int(round(\x+3) )}
       \let\y\pgfmathresult
       \fill[black] ($(2*\x,4)$) circle (5pt);
\node[above right, xshift=0.4cm, yshift=-0.15cm, rotate=45]  at ($(2*\x,4)$) {\contour{white}{\small $i^{(3)}_{\x} = j_{\y}$}};}

\fill[black] (22,2) circle (5pt);
\node[above right, xshift=0.4cm, yshift=-0.15cm, rotate=45]  at  (22,2)  {\contour{white}{$i^{(4)}_{11} = j_{15}$}};

\fill[black] (24,2) circle (5pt);
\node[above right, xshift=0.4cm, yshift=-0.15cm, rotate=45]  at  (24,2)  {\contour{white}{$i^{(4)}_{12} = j_{16}$}};

\fill[black] (24,0) circle (5pt);
\node[above right, xshift=0.4cm, yshift=-0.15cm, rotate=45]  at  (24,0)  {\contour{white}{$i^{(5)}_{12} = j_{17}$}};

\foreach \x in {13,14,15}
{ 
 \pgfmathparse{int(round(\x+5) )}
       \let\y\pgfmathresult
       \fill[black] ($(2*\x,0)$) circle (5pt);
\node[above right, xshift=0.4cm, yshift=-0.15cm, rotate=45]  at ($(2*\x,0)$) {\contour{white}{\small $i^{(5)}_{\x} = j_{\y}$}};}

\end{tikzpicture}
\caption{A visual representation of the indices in $I^{[k]}$ induced by a $k$-step $(\stp_1,\dots, \stp_k)$ with $k=5$ and $q=15$} \label{fig:IkIndices}
\end{figure}

The set of all indices of $I^{[k]}$ obtained by splitting $(i_0,\dots, i_q)$ into $k+1$ levels described in the above way is denoted by
\begin{equation}
J_k(i_0,\dots,i_q):=\big\{(j_0,\dots,j_{q+k}): \exists(\stp_1,\dots,\stp_k)\in S_k(q)\text{ such that \eqref{EQU:j's-from-i's}  holds true} \big\}
\end{equation}

Note from \eqref{EQU:j's-from-i's}, that for $(j_0,\dots,j_{q+k})\in J_k(i_0,\dots,i_q)$ and $0\leq m\leq k$ the indices of the $m$th level occur exactly at $j_{\stp_m+m}, \dots, j_{\stp_{m+1}+m}$, and this information can always be recovered from $(j_0,\dots,j_{q+k})$. Thus, the step to and from the $m$th level occur exactly at $j_{\stp_m+m}$ and $j_{\stp_{m+1}+m}$. For our purposes, it is important to note that we do allow the special case where $\stp_m=\stp_{m+1}$, in which case there is only one index $j_{\stp_m+m}=j_{\stp_{m+1}+m}$ at the $m$th level. For $m=1,\dots, k-1$, we denote by $\hat J^m_k(i_0,\dots, i_q)$ those indices that come from $J_k(i_0,\dots, i_q)$ with either $j_{\stp_m+m}$ or $j_{\stp_{m+1}+m}$ removed:
\begin{align}\label{EQU:Jhat-m-k}
\hat J^{m,\leftarrow}_k(i_0,\dots, i_q)&:= \Big\{(j_0,\dots, \widehat{j_{\ell}}, \dots, j_{q+k}): \exists (j_0, \dots, j_{q+k})\in J_k(i_0,\dots, i_q) 
\\
\nonumber
&\hspace{2.74in}\text{such that $\ell={\stp_m+m}$}\Big\},
\\
\nonumber
\hat J^{m,\rightarrow}_k(i_0,\dots, i_q)&:= \Big\{(j_0,\dots, \widehat{j_{\ell}}, \dots, j_{q+k}): \exists (j_0, \dots, j_{q+k})\in J_k(i_0,\dots, i_q) 
\\
\nonumber &\hspace{1.5in} 
\text{such that $\ell={\stp_{m+1}+m}$ but $\ell\neq {\stp_m+m}$}\Big\},
\\
\nonumber
\hat J^m_k(i_0,\dots, i_q)&:= \hat J^{m,\leftarrow}_k(i_0,\dots, i_q) \sqcup \hat J^{m,\rightarrow}_k(i_0,\dots, i_q).
\end{align}
For $m=0$, respectively $m=k$, we only remove the index where the step occurs, but not $j_0$, respectively $j_q$. More precisely, we define
\begin{multline}\label{EQU:Jhat-0-k}
\hat J^0_k(i_0,\dots, i_q):=\hat J^{0,\rightarrow}_k(i_0,\dots, i_q)
\\
:= \Big\{(j_0,\dots, \widehat{j_{\ell}},\dots, j_{q+k}): \exists (j_0,\dots, j_{q+k})\in J_k(i_0,\dots, i_q)\text{ such that $\ell={\stp_{1}}$}\Big\},
\end{multline}
and
\begin{multline}\label{EQU:Jhat-k-k}
\hat J^k_k(i_0,\dots, i_q):=\hat J^{k,\leftarrow}_k(i_0,\dots, i_q)
\\
:= \Big\{(j_0,\dots, \widehat{j_{\ell}}, \dots, j_{q+k}): \exists (j_0,\dots,j_{q+k})\in J_k(i_0,\dots, i_q)\text{ such that $\ell={\stp_{k}+k}$}\Big\}.
\end{multline}
\end{example}

\begin{lemma}\label{LEM:J=Jhat+hatJ}
Fix a set of indices $i_0,\dots, i_q\in I$, and a $k\geq 0$. Then, the map 
\begin{align*}
f:J_k(i_0,\dots, i_q)\times\{0,\dots, q+k\}
%\\& \quad
 &\to \Big(\bigsqcup_{0\leq r\leq q} J_k(i_0,\dots, \widehat{i_r},\dots, i_q)\Big)\sqcup\Big(\bigsqcup_{0\leq m\leq k}\hat{J}_k^m(i_0,\dots, i_q)\Big)
\\
f:((j_0,\dots,j_{q+k}),\ell)&\mapsto (j_0,\dots,\widehat{j_\ell},\dots, j_{q+k}),
\end{align*}
which removes the $\ell$th index $j_{\ell}$, is a bijection.
\end{lemma}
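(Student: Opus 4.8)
The plan is to trivialize the combinatorics by reading each element of $J_k(i_0,\dots,i_q)$ through its underlying $k$-step and the associated monotone staircase of figure \ref{fig:IkIndices}. By \eqref{EQU:j's-from-i's} a sequence $(j_0,\dots,j_{q+k})\in J_k(i_0,\dots,i_q)$ is the same datum as a step $(\stp_1,\dots,\stp_k)\in S_k(q)$: writing $j_\ell=i_{r}^{(m)}$, the superscript (level) $m$ is non-decreasing from $0$ to $k$ and the subscript $r$ is non-decreasing from $0$ to $q$, each consecutive pair $j_\ell,j_{\ell+1}$ either raising the level by one (a vertical step, occurring exactly at the positions $\stp_m+m$) or the subscript by one. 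Setting $\stp_0:=0$ and $\stp_{k+1}:=q$, the positions occupied by level $m$ are $\stp_m+m,\dots,\stp_{m+1}+m$, and there are $q+k+1$ positions in all. I will regard the codomain as the external coproduct it literally is, and regard $f$ as sending $((j_\bullet),\ell)$ not merely to the truncated sequence but to that sequence placed in the summand determined by the \emph{role} of the deleted index $j_\ell$; keeping track of this role is what makes $f$ well defined into a disjoint union and, as explained below, is genuinely necessary.

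First I would establish well-definedness by partitioning the positions of a fixed staircase. Call $\ell$ \emph{horizontal} if the subscript $r$ of $j_\ell=i_r^{(m)}$ occurs at a single level, equivalently $r\notin\{\stp_1,\dots,\stp_k\}$, and \emph{vertical} otherwise. For horizontal $\ell$, deleting $j_\ell$ deletes the subscript $i_r$ outright; after relabelling the remaining subscripts the step positions past $r$ drop by one, and one checks directly that the result is an honest staircase for $(i_0,\dots,\widehat{i_r},\dots,i_q)$, hence an element of $J_k(i_0,\dots,\widehat{i_r},\dots,i_q)$. For vertical $\ell$, the point $j_\ell$ is the first index $(\stp_m,m)$ or the last index $(\stp_{m+1},m)$ of its level $m$, i.e.\ the top or bottom of a vertical step, and deleting it lands in $\hat J^{m,\leftarrow}_k$ or $\hat J^{m,\rightarrow}_k$ by definition; the endpoints $j_0=i_0^{(0)}$ and $j_{q+k}=i_q^{(k)}$ fall under the special cases $m=0$ and $m=k$. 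This shows $f$ maps into the asserted codomain.

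Bijectivity I would then prove summand by summand by exhibiting the inverse insertion. Given an element of $J_k(i_0,\dots,\widehat{i_r},\dots,i_q)$, there is a \emph{unique} horizontal step crossing the gap from subscript $r-1$ to subscript $r+1$ (each horizontal step raises the subscript by exactly one, and $r$ is the only missing value); inserting one new index $i_r^{(m)}$ at the level $m$ of that step and recording its position returns the unique preimage. Given a tagged element of $\hat J^{m,\leftarrow}_k$ or $\hat J^{m,\rightarrow}_k$, the tag supplies $m$ and which boundary of level $m$ was removed, the unique defect of the sequence supplies the missing subscript, and re-inserting the corresponding first or last index of level $m$ returns the unique preimage. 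Checking that these insertions are two-sided inverses of $f$ on each summand finishes the argument.

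The crux, and the reason the definitions are asymmetric and the target is a disjoint union rather than an ordinary union, is the behaviour at degenerate levels and endpoints. When $\stp_m=\stp_{m+1}$ the level $m$ is a single index that is at once its first and last; it must be counted only once, which is why $\hat J^{m,\rightarrow}_k$ carries the clause $\ell\neq\stp_m+m$, why level $0$ admits only a $\rightarrow$-deletion and level $k$ only a $\leftarrow$-deletion, and why deleting $j_0$ is a horizontal deletion when $\stp_1>0$ but a vertical one lying in $\hat J^0_k$ when $\stp_1=0$ (symmetrically for $j_{q+k}$). Most importantly, the \emph{underlying} truncated sequences of $\hat J^{m,\rightarrow}_k$ and $\hat J^{m+1,\leftarrow}_k$ genuinely coincide for suitable inputs: a single diagonal defect, where the subscript and the level both jump by one at the level-$m$/$(m+1)$ crossing, does not reveal whether the top or the bottom of that step was removed, so the same sequence is produced by two distinct staircases with two distinct deletions. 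This is precisely why $f$ must record the role of the deleted index and why the codomain is a coproduct; getting the partition simultaneously exhaustive and non-overcounting — a short count gives $q+1-d$ horizontal and $k+d$ vertical positions per staircase, where $d$ is the number of distinct values among $\stp_1,\dots,\stp_k$ — is the one delicate point, after which bijectivity is immediate.
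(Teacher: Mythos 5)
Your proposal is correct and follows essentially the same route as the paper's proof: well-definedness via the dichotomy between ``vertical'' deletions (the removed $j_\ell$ is a first or last index of a level, landing in the appropriate $\hat J^m_k$) and ``horizontal'' deletions (landing in $J_k(i_0,\dots,\widehat{i_r},\dots,i_q)$), followed by constructing the inverse summand by summand through the unique re-insertion. The only difference is one of detail: you spell out why the re-insertion point is unique (the unique crossing level for a deleted subscript, the unique defect for a deleted level boundary) and make explicit the coproduct-versus-union subtlety for overlapping $\hat J^{m,\rightarrow}_k$ and $\hat J^{m+1,\leftarrow}_k$, both of which the paper leaves as assertions.
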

\begin{proof}
First, note that the map $f$ is well-defined. If the removed index $j_\ell$ is either the beginning $j_{\stp_{m}+m}$ or the end $j_{\stp_{m+1}+m}$ index of a level (say the $m$th level), then $f((j_0,\dots,j_{q+k}),\ell)= (j_0,\dots,\widehat{j_\ell},\dots, j_{q+k})$ lands in $\hat{J}_k^m(i_0,\dots, i_q)$. Otherwise, $f$ removes one of the original indices, say $i_r$, in which case $f((j_0,\dots,j_q),\ell)$ lands in $J_k(i_0,\dots,\widehat{i_r},\dots, i_q)$. We can construct the inverse $f^{-1}$ by observing that for each $0\leq r\leq q$ and $(j'_0,\dots, j'_{q+k-1})\in J_{k}(i_0,\dots, \widehat{i_r},\dots,j_{q})$ there exists a unique $(j_0,\dots,j_{q+k})\in J_k(i_0,\dots, i_q)$ and $0\leq \ell\leq q+k$ so that $ (j_0,\dots,\widehat{j_\ell},\dots, j_{q+k})=(j'_0,\dots,j'_{q+k-1})$. Similarly, for each $0\leq m\leq k$ and $(j'_0,\dots, j'_{q+k-1})\in \hat{J}^m_{k}(i_0,\dots,j_{q})$ there exists a unique $(j_0,\dots,j_{q+k})\in J_k(i_0,\dots, i_q)$ and $0\leq \ell\leq q+k$ so that $ (j_0,\dots,\widehat{j_\ell},\dots, j_{q+k})=(j'_0, \dots,j'_{q+k-1})$ with $j_\ell$ on the $m$th level.
\end{proof}

We next define the integration over the fiber map.

\begin{definition}
Let $\mc U=\{U_i\}_{i\in I}$ be a cover of a complex manifold $M$, and let $\mc U^{[k]}$ be the $k$-fold cover coming from $\mc U$ from definition \ref{DEF:U^[n]}. For an element $\mu\in \vC^\bu(\mc U^{[k]}, \mc A)$ in the \v{C}ech complex, we define the following integration over the fiber map $\intfib{k}:\vC^\bu(\mc U^{[k]},\mc A)\to \vC^\bu(\mc U, \mc A)$, which maps the components $\intfib{k}:\vC^{q+k}(\mc U^{[k]},\mc A^r)\to \vC^{q}(\mc U, \mc A^r)$, by setting
\begin{equation}\label{EQU:Def-integration}
\Big(\intfib{k}\mu\Big)_{i_0,\dots,i_q}:=
\sum_{(j_0,\dots, j_{q+k})\in J_k(i_0,\dots, i_q)}
 (-1)^{\stp_1+\dots+\stp_k}\cdot \mu_{j_0,\dots, j_{q+k}}
\end{equation}
Note that the sign is well defined, since each $(j_0,\dots, j_{q+k})\in J_k(i_0,\dots, i_q)$ uniquely determines a $k$-step $(\stp_1,\dots,\stp_k)$.

Let $k>0$, and let $j\in \{0,\dots,k\}$. For the $j$th face map $\delta_j:[k-1]\to [k]$, there is a map of covers $\mc U^{[\bu]}(\delta_j)\in \Cov_M(\mc U^{[k-1]}, \mc U^{[k]})$ given by ignoring the open sets $U_{i^{(j)}}$ of the $j$th component of the cover $\mc U^{[k]}$. In particular, by definition \ref{DEF:Presheaf-Cech}, there is an induced map $\widetilde{\delta_j}:\vC^\bu(\mc U^{[k]}, \mc A)\to \vC^\bu(\mc U^{[k-1]}, \mc A)$, which forgets the $j$th open sets $U_{i^{(j)}}$, i.e., $\widetilde{\delta_j}(\mu)\in \vC^\bu(\mc U^{[k-1]}, \mc A)$ is the collection determined by $\mu\in \vC^\bu(\mc U^{[k]}, \mc A)$, which is only defined on indices not including any $i^{(j)}$ for $i\in I$.
%Let $k>0$, and let $m\in \{0,\dots,k\}$. For the $m$th face map $\delta_m:[k-1]\to [k]$, there is a map of covers in $\Cov_M(\mc U^{[k-1]}, \mc U^{[k]})$ given by a mapping $I^{[k-1]}\to I^{[k]}$ via $$(i_0^{(0)},\dots,i_{r}^{({m-1})}, i_{r}^{({m+1})}, \dots, i_q^{(k)})\mapsto (i_0^{(0)},\dots,i_{r}^{({m-1})},i_r^{(m)}, i_{r}^{({m+1})}, \dots, i_q^{(k)}).$$
%In particular, by definition \ref{DEF:Presheaf-Cech}, there is an induced map $\widetilde{\delta_j}:\vC^\bu(\mc U^{[k]}, \mc A)\to \vC^\bu(\mc U^{[k-1]}, \mc A)$, which forgets the $j$th open sets $U_{i^{(j)}}$, i.e., $\widetilde{\delta_j}(\mu)\in \vC^\bu(\mc U^{[k-1]}, \mc A)$ is the collection determined by $\mu\in \vC^\bu(\mc U^{[k]}, \mc A)$, which is only defined on indices not including any $i^{(j)}$ for $i\in I$.
\end{definition}
With this notation, we have the following integration over the fiber formulae.
\begin{proposition}\label{PROP:delta-tr=...}
The integration over the fiber commutes with the internal differential $d_{\mc A}$ of $\mc A$, i.e.,
\begin{equation}\label{EQU:integration-d}
d_{\mc A}\Big(\intfib{k} \mu \Big)  = \intfib{k} d_{\mc A}(\mu) 
\end{equation}
For the \v{C}ech differential $\delta$, we get the following identity,
\begin{equation}\label{EQU:integration-delta}
\intfib{k} \delta(\mu)  = (-1)^k\cdot \delta\Big(\intfib{k} \mu \Big) + \sum_{j=0}^k  (-1)^j \cdot \int_{\Delta^{k-1}} \widetilde{\delta_j}(\mu )
\end{equation}
\end{proposition}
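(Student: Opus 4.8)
The first identity \eqref{EQU:integration-d} requires essentially no work. By definition \eqref{EQU:Def-integration}, the operator $\intfib{k}$ is a finite signed sum $\big(\intfib{k}\mu\big)_{i_0,\dots,i_q}=\sum_{(j_0,\dots,j_{q+k})\in J_k(i_0,\dots,i_q)}(-1)^{\stp_1+\dots+\stp_k}\,\mu_{j_0,\dots,j_{q+k}}$ that merely relabels the \v{C}ech indices and multiplies by constants depending only on the $k$-step. Since the internal differential $d_{\mc A}$ acts on the coefficient $\mu_{j_0,\dots,j_{q+k}}\in\mc A_{j_0,\dots,j_{q+k}}$ without touching the \v{C}ech index or the $k$-step, it commutes with this signed sum termwise, giving $d_{\mc A}\intfib{k}\mu=\intfib{k}d_{\mc A}\mu$. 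No Koszul sign intervenes, because we compare with the bare differential $d_{\mc A}$ and not the total differential $D$ of \eqref{EQU:D-on-C(U,A)}.

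The content is in \eqref{EQU:integration-delta}. First I would expand the left-hand side using the \v{C}ech differential \eqref{EQU:chdel}:
\[
\big(\intfib{k}\delta\mu\big)_{i_0,\dots,i_q}=\sum_{(j_0,\dots,j_{q+k})\in J_k(i_0,\dots,i_q)}\ \sum_{\ell=0}^{q+k}(-1)^{\stp_1+\dots+\stp_k}(-1)^\ell\,\mu_{j_0,\dots,\widehat{j_\ell},\dots,j_{q+k}}.
\]
This is a sum indexed exactly by the domain $J_k(i_0,\dots,i_q)\times\{0,\dots,q+k\}$ of the bijection $f$ of Lemma \ref{LEM:J=Jhat+hatJ}. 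The plan is to push the sum through $f$ and read off the two target blocks separately: the terms landing in $\bigsqcup_r J_k(i_0,\dots,\widehat{i_r},\dots,i_q)$ should reassemble into $(-1)^k\delta(\intfib{k}\mu)$, while the terms landing in $\bigsqcup_m \hat J^m_k(i_0,\dots,i_q)$ should reassemble into $\sum_{j}(-1)^j\intfib{k-1}\widetilde{\delta_j}(\mu)$.

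The interior block is the cleaner half. Here $j_\ell$ lies strictly inside some level $m$, so it is the $m$-level copy of an original index $i_r$ at position $\ell=r+m$ by \eqref{EQU:j's-from-i's}. Deleting it yields a $k$-step configuration for $(i_0,\dots,\widehat{i_r},\dots,i_q)$ whose step vector $(\stp'_1,\dots,\stp'_k)$ is obtained from $(\stp_1,\dots,\stp_k)$ by lowering the $k-m$ steps past position $r$ by one, so $\stp_1+\dots+\stp_k=\stp'_1+\dots+\stp'_k+(k-m)$. Combining the three signs gives $(-1)^{\stp_1+\dots+\stp_k}(-1)^\ell=(-1)^k(-1)^r(-1)^{\stp'_1+\dots+\stp'_k}$, which is exactly $(-1)^k$ times the corresponding summand of $\delta(\intfib{k}\mu)_{i_0,\dots,i_q}=\sum_r(-1)^r(\intfib{k}\mu)_{i_0,\dots,\widehat{i_r},\dots,i_q}$; summing over all interior removals produces the first term on the right of \eqref{EQU:integration-delta}.

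The boundary block is where the real work lies and is the main obstacle. A removal in $\hat J^{m,\rightarrow}_k$ (deleting the last index $i^{(m)}_{\stp_{m+1}}$ of level $m$) and one in $\hat J^{m+1,\leftarrow}_k$ (deleting the first index $i^{(m+1)}_{\stp_{m+1}}$ of level $m+1$) can produce the \emph{same} underlying sequence in $I^{[k]}$ — a ``clean jump'' across the $m$/$(m{+}1)$ boundary — arising from two full configurations whose $(m{+}1)$st steps differ by one; a short sign check shows their coefficients in the displayed double sum are opposite, so all such paired contributions cancel. The only removals with no partner are those deleting the unique index of a degenerate level ($\stp_j=\stp_{j+1}$), which empties that level $j$; the surviving sequence then avoids level $j$ and is, after relabeling, a $(k-1)$-step configuration for $\widetilde{\delta_j}(\mu)$. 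Tracking the step vector through this collapse gives $\stp_1+\dots+\stp_k=\stp'_1+\dots+\stp'_{k-1}+\stp_j$ with removed position $\ell=\stp_j+j$, whence $(-1)^{\stp_1+\dots+\stp_k}(-1)^\ell=(-1)^j(-1)^{\stp'_1+\dots+\stp'_{k-1}}$, precisely the sign on the $\widetilde{\delta_j}$-summand of $(-1)^j\intfib{k-1}\widetilde{\delta_j}(\mu)$. The delicate points I expect to spend the most care on are (i) verifying the pairwise cancellation is exhaustive and sign-exact across all interior boundaries, and (ii) handling the endpoint levels $m=0$ and $m=k$, where by \eqref{EQU:Jhat-0-k} and \eqref{EQU:Jhat-k-k} only the step index (possibly equal to $j_0$ or $j_{q+k}$) is deleted, so that the $j=0$ and $j=k$ terms emerge with the correct signs. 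Adding the interior and boundary contributions then yields \eqref{EQU:integration-delta}.
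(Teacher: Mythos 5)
Your proposal is correct and follows essentially the same route as the paper: the termwise argument for \eqref{EQU:integration-d}, then for \eqref{EQU:integration-delta} the transport of the sum through the bijection of lemma \ref{LEM:J=Jhat+hatJ}, with the interior block reassembling into $(-1)^k\delta(\intfib{k}\mu)$ and the boundary block collapsing, after the pairwise cancellation between $\hat J^{m,\rightarrow}_k$ and $\hat J^{m+1,\leftarrow}_k$, to the degenerate-level terms giving $\sum_j(-1)^j\int_{\Delta^{k-1}}\widetilde{\delta_j}(\mu)$ (the paper's equation \eqref{EQU:int-deltam-Deltak}). The sign bookkeeping you sketch in both blocks matches the paper's computations exactly.
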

\begin{proof}
For the first equation \eqref{EQU:integration-d} note that both sides of \eqref{EQU:Def-integration} are on the open set $U_{i_0,\dots, i_q}$, so that the same differential $d_{\mc A}$ of $\mc A(U_{i_0,\dots, i_q})$ is applied inside and outside the sum of \eqref{EQU:Def-integration}.

Next, we prove \eqref{EQU:integration-delta}. For fixed indices $i_0,\dots, i_q\in I$, we first calculate $\delta\big(\intfib{k} \mu \big)$ on $U_{i_0,\dots,i_q}$ to be
\begin{align*}
\Big(\delta\big(\intfib{k} \mu \big)\Big)_{i_0,\dots, i_q} =& \sum_{0\leq r\leq q} (-1)^{r}\cdot \big(\intfib{k} \mu \big)_{i_0,\dots, \widehat{i_r},\dots,i_q}\\
=&
\sum_{0\leq r\leq q}\quad
\sum_{(j'_0,\dots, j'_{q+k-1})\in J_k(i_0,\dots, \widehat{i_r},\dots,i_q)}
 (-1)^{r+\stp'_1+\dots+\stp'_k}\cdot \mu_{j'_0,\dots, j'_{q+k-1}}.
\end{align*}
Next, we calculate $\intfib{k} \delta(\mu) $ on $U_{i_0,\dots,i_q}$ to be
\begin{align*}
\Big( \intfib{k} \delta(\mu) \Big)_{i_0,\dots, i_q}=
&
\sum_{(j_0,\dots, j_{q+k})\in J_k(i_0,\dots, i_q)} (-1)^{\stp_1+\dots+\stp_k}\cdot \sum_{0\leq \ell\leq q+k} (-1)^\ell \mu_{j_0,\dots,\widehat{j_\ell},\dots, j_{q+k}}
\\
=&
\sum_{0\leq r\leq q}\quad
 \sum_{(j'_0,\dots, j'_{q+k-1})\in J_k(i_0,\dots,\widehat{i_r}, \dots, i_q)}
 (-1)^{r+k+\stp'_1+\dots+\stp'_k}\cdot \mu_{j'_0,\dots ,j'_{q+k-1}}
 \\  
 &+\sum_{0\leq m\leq k}\quad
  \sum_{(j'_0,\dots, j'_{q+k-1})\in \hat{J}^m_k(i_0, \dots, i_q)}
 (-1)^{r+m+\stp'_1+\dots+\stp'_k}\cdot \mu_{j'_0,\dots ,j'_{q+k-1}},
 \end{align*}
where we have used lemma \ref{LEM:J=Jhat+hatJ} in the last equality. (To see the sign in the upper line of the right hand side, note that if the removed index $j_\ell=i_{r}^{(\kappa)}$ occurs at $i_r$ at the $\kappa$th level, then $\stp_1=\stp'_1,\dots,\stp_\kappa=\stp'_\kappa$ while $\stp_{\kappa+1}=\stp'_{\kappa+1}-1,\dots,\stp_{k}=\stp'_{k}-1$, and $\ell=r+\kappa$, thus $\ell+\stp_1+\dots+\stp_k=(r+\kappa)+\stp'_1+\dots+\stp'_k-(k-\kappa)\equiv r+k+\stp'_1+\dots+\stp'_k (\text{mod }2)$. For the sign in the lower line of the right hand side, assume again that $j_\ell=i_r^{(\kappa)}$, and note that in this case the $\stp_1=\stp'_1, \dots, \stp_k =\stp'_k$ do not change, while $\ell=r+m$ is the number of indices before $j_\ell$.)

It therefore remains to show that $\sum_{j=0}^k \int_{\delta_j(\Delta^k)} \mu$ on $U_{i_0,\dots, i_q}$ can be written as
\begin{equation}\label{EQU:int-deltam-Deltak}
\Big(\sum_{j=0}^k (-1)^j \int_{\Delta^{k-1}} \widetilde{\delta_j}(\mu)\Big)_{i_0,\dots,i_q}=\sum_{0\leq m\leq k}  \,\,\sum_{ (j'_0,\dots, j'_{q+k-1})\in \hat J^m_k(i_0,\dots, i_q)} (-1)^{r+m+\stp'_1+\dots+\stp'_k}\cdot \mu_{j'_0,\dots, j'_{q+k}}.
\end{equation}

\begin{proof}[Proof of Equation \eqref{EQU:int-deltam-Deltak}]
We evaluate the right hand side of equation \eqref{EQU:int-deltam-Deltak}. First, we claim that the right hand side of \eqref{EQU:int-deltam-Deltak} vanishes except for the terms where
\begin{enumerate}
\item\label{ITEM:J0} either $\stp_1=0$ in $\hat{J}^0_k(i_0,\dots,i_q)$,
 \item\label{ITEM:Jm} or $\stp_m=\stp_{m+1}$ in $\hat{J}^m_k(i_0,\dots,i_q)$ for $m=1,\dots, k-1$,
 \item\label{ITEM:Jk} or $\stp_k=q$ in $\hat{J}^k_k(i_0,\dots,i_q)$. 
 \end{enumerate}
 Since we fixed $(i_0,\dots, i_q)$, we will simplify notation by writing $\hat{J}^m_k=\hat{J}^m_k(i_0,\dots,j_q)$.

To see \eqref{ITEM:J0}, if $\stp_1>0$, then the indices $(j_0,\dots, \widehat{j_{\stp_1}},\dots, j_{q+k})\in \hat{J}^{0,\rightarrow}_k$ coincide with the indices $(j_0,\dots, \widehat{j_{\stp'_1+1}},\dots, j_{q+k})\in \hat{J}^{1,\leftarrow}_k$ for the new steps $(\stp'_1,\stp'_2,\dots, \stp'_k)=(\stp_1-1,\stp_2,\dots,\stp_k)$, since $(j_{\stp_1-1},\widehat{j_{\stp_1}},j_{\stp_1+1})=(i^{(0)}_{\stp_1-1},\widehat{i^{(0)}_{\stp_1}},i^{(1)}_{\stp_1})=(i^{(0)}_{\stp_1-1},\widehat{i^{(1)}_{\stp_1-1}},i^{(1)}_{\stp_1})=(i^{(0)}_{\stp'_1},\widehat{i^{(1)}_{\stp'_1}},i^{(1)}_{\stp'_1+1})=(j_{\stp'_1},\widehat{j_{\stp'_1+1}},{j_{\stp'_1+2}})$. Thus, the same term appears twice, once from $\hat{J}^0_k$ with $(\stp_1,\dots,\stp_k)$, and once from $\hat{J}^1_k$ with $(\stp'_1,\dots,\stp'_k)$, and cancels as they have opposite signs (as the ``$r+m$'' part of the sign is the same for both, but $\stp'_1=\stp_1-1$).

Next, for \eqref{ITEM:Jm}, if $\stp_m<\stp_{m+1}$, we can either have the indices $(j_0,\dots, \widehat{j_{\stp_m+m}},\dots, j_{q+k})\in \hat{J}^{m,\leftarrow}_k$ or $(j_0,\dots, \widehat{j_{\stp_{m+1}+m}},\dots, j_{q+k})\in \hat{J}^{m,\rightarrow}_k$ appear in $\hat{J}^m_k$. In the first case, $(j_0,\dots, \widehat{j_{\stp_m+m}},\dots, j_{q+k})$ coincides with $(j_0,\dots, \widehat{j_{\stp'_{(m-1)+1}+(m-1)}},\dots, j_{q+k})\in \hat{J}^{m-1,\rightarrow}_k$ for the steps $(\stp'_1,\dots,\stp'_m, \dots, \stp'_k)=(\stp_1,\dots, \stp_m+1,\dots, \stp_k)$, since the indices coincide after removal of the index in question, i.e., $(j_{\stp_m+m-1},\widehat{j_{\stp_m+m}}, j_{\stp_m+m+1})=(i^{(m-1)}_{\stp_m},\widehat{i^{(m)}_{\stp_m}}, i^{(m)}_{\stp_m+1})=(i^{(m-1)}_{\stp_m},\widehat{i^{(m-1)}_{\stp_m+1}}, i^{(m)}_{\stp_m+1})=(i^{(m-1)}_{\stp'_m-1},\widehat{i^{(m-1)}_{\stp'_m}}, i^{(m)}_{\stp'_m})=(j_{\stp'_m+m-2},\widehat{j_{\stp'_m+m-1}},j_{\stp'_m+m})$. The two corresponding terms have opposite signs (since $\stp'_m=\stp_m+1$), and thus cancel. In the second case, $(j_0,\dots, \widehat{j_{\stp_{m+1}+m}},\dots, j_{q+k})$ coincides with $(j_0,\dots, \widehat{j_{\stp'_{(m+1)}+(m+1)}},\dots, j_{q+k})\in \hat{J}^{m+1,\leftarrow}_k$ for $(\stp'_1,\dots,\stp'_{m+1}, \dots, \stp'_k)=(\stp_1,\dots, \stp_{m+1}-1,\dots, \stp_k)$, since we have again coinciding indices $(j_{\stp_{m+1}+m-1},\widehat{j_{\stp_{m+1}+m}}, j_{\stp_{m+1}+m+1})=(i^{(m)}_{\stp_{m+1}-1},\widehat{i^{(m)}_{\stp_{m+1}}}, i^{(m+1)}_{\stp_{m+1}})=(i^{(m)}_{\stp_{m+1}-1},\widehat{i^{(m+1)}_{\stp_{m+1}-1}}, i^{(m+1)}_{\stp_{m+1}})
=(i^{(m)}_{\stp'_{m+1}},\widehat{i^{(m+1)}_{\stp'_{m+1}}}, i^{(m+1)}_{\stp'_{m+1}+1})
=(j_{\stp'_{m+1}+m},\widehat{j_{\stp'_{m+1}+m+1}},j_{\stp'_{m+1}+m+2})$. Again, these have opposite signs (since $\stp'_{m+1}=\stp_{m+1}-1$) and thus cancel.

Finally, for \eqref{ITEM:Jk}, if $\stp_k<q$, the indices $(j_0,\dots, \widehat{j_{\stp_k+k}},\dots, j_{q+k})\in \hat{J}^{k,\leftarrow}_k$ coincide with the indicies $(j_0,\dots, \widehat{j_{\stp'_{(k-1)+1}+(k-1)}},\dots, j_{q+k})\in \hat{J}^{k-1,\rightarrow}_k$ for $(\stp'_1,\dots,\stp'_{k-1},\stp'_k)=(\stp_1,\dots, \stp_{k-1},\stp_k+1)$, since removing the appropriate index yields $(j_{\stp_k+k-1},\widehat{j_{\stp_k+k}},j_{\stp_k+k+1})=(i^{(k-1)}_{\stp_k},\widehat{i^{(k)}_{\stp_k}}, i^{(k)}_{\stp_k+1})=(i^{(k-1)}_{\stp_k},\widehat{i^{(k-1)}_{\stp_k+1}}, i^{(k)}_{\stp_k+1})=(i^{(k-1)}_{\stp'_k-1},\widehat{i^{(k-1)}_{\stp'_k}}, i^{(k)}_{\stp'_k})=(j_{\stp'_k+k-2},\widehat{j_{\stp'_{k}+k-1}},j_{\stp'_k+k})$. As the corresponding terms have opposite signs (due to $\stp'_{k}=\stp_{k}+1$), they cancel.

Thus, the only remaining terms are as follows. For \eqref{ITEM:J0}, there are terms in $\hat{J}^0_k$ with $\stp_1=0$ and $j_0$ removed, i.e., we only have steps that skip the $0$th level altogether. For \eqref{ITEM:Jm}, we have terms in $\hat{J}^m_k$ with $\stp_m=\stp_{m+1}$ and $j_{\stp_m+m}$ removed, i.e., we only have steps that skip the $m$th level altogether. For \eqref{ITEM:Jk}, we have terms in $\hat{J}^k_k$ with $\stp_k=q$ and $j_{q+k}$ removed, i.e., we have steps that skip the $k$th level altogether.
We thus sum over steps that are in $J_{k-1}(i_0,\dots, i_q)$ where we skip over the $m$th level for $m=0,\dots, k$. Note conversely, that for any step in $J_{k-1}(i_0,\dots, i_q)$ and any $m=0,\dots,k$, we can add another level, which will be the $m$th level, so that the steps come from $\hat{J}^m_k$ via removing the $m$th level. This shows that 
\begin{align*}
\sum_{0\leq m\leq k}\,\, &
\sum_{(j'_0,\dots, j'_{q+k-1})\in \hat J^m_k(i_0,\dots, i_q)}
(-1)^{r+m+\stp'_1+\dots+\stp'_k}\cdot \mu_{j'_0,\dots,j'_{q+k-1}}
\\
&=
\sum_{0\leq m\leq k}(-1)^m 
\sum_{(j'_0,\dots, j'_{q+k-1})\in J_{k-1}(i_0,\dots, i_q)}
(-1)^{\stp'_1+\dots+\widehat{\stp'_m}+\dots+\stp'_k}
(\widetilde{\delta_m}(\mu))_{j'_0,\dots, j'_{q+k-1}}
\\
&=\Big(\sum_{m=0}^k (-1)^m \int_{\Delta^{k-1}} \widetilde{\delta_m}(\mu)\Big)_{i_0,\dots, i_q},
\end{align*}
where we have used that $\stp'_m=r$ in the first equality. This proves equation \eqref{EQU:int-deltam-Deltak}.
\end{proof}
This completes the proof of proposition \ref{PROP:delta-tr=...}.
\end{proof}

\subsection{Computing $\Tot(\CS(\CN \mc U))$ on product bundles}

We will define the left vertical map of \eqref{EQU:alternative-Tot(CS)} as a composition of two maps $\gamma$ and $\iota$,
\begin{equation}\label{EQU:gamma-iota}
\xymatrix{ 
\CMan^{\Del^{op}}(\CN \mc U^{[\bu]},BG)  \ar^{\gamma}[d]  
\\ 
(\vC^\bu(\mc U^{[\bu]},\Ohol^\bu))^{even}_{closed} \ar^{\iota}[d]
\\
\DKSet(\vC^\bu(\mc U, \Ohol^\bu)\ul) 
  }
\end{equation}
Here, $(\vC^\bu(\mc U^{[\bu]},\Ohol^\bu))^{even}_{closed}$ denotes the simplicial set, whose $k$-simplices are $\delta$-closed elements of $\vC^\bu(\mc U^{[k]},\Ohol^\bu)$ which are of even total degree. 

\begin{definition}\label{DEF:gamma}
Assume that $G=GL(n,\C)$. For a cover $\mc V=\{V_j\}_{j\in J}\in \Cov_M$ of $M$, we define the map $\gamma_{\mc V}:\CMan^{\Del^{op}}(\CN \mc V,BG)\stackrel {\gamma_{\mc V}}\to (\vC^\bu(\mc V,\Ohol^\bu))^{even}_{closed}$ as follows. By lemma \ref{LEM:n-simplex-inCMAN(NU,BG)} (for $\mc U=\mc V$ and $n=0$), an element $h\in\CMan^{\Del^{op}}(\CN \mc V,BG)$ is given by transition functions $g_{i,j}:V_{i,j}\to G\subseteq \C^{n,n}$. Then, define $\gamma_{\mc V}(h)$ on the open set $V_{j_0,\dots, j_p}$ to be
\begin{equation}\label{DEF:gamma_V(h)}
(\gamma_{\mc V}(h))_{j_0,\dots,j_p}:=\tr\Big(g_{j_p,j_{0}}^{-1}\cdot \partial (g_{j_p,j_{p-1}})\cdot \ldots \cdot \partial (g_{j_2,j_{1}})\cdot \partial (g_{j_1,j_{0}}) \Big)
\end{equation}
Note that $(\gamma_{\mc V}(h))_{j_0,\dots,j_p}\in \Ohol^p(V_{j_0,\dots, j_p})$ is of \v{C}ech degree $p$ and form degree $p$, and thus of even total degree $2p$ in $\vC^\bu(\mc V,\Ohol^\bu)$. The collection of all these $\{(\gamma_{\mc V}(h))_{j_0,\dots,j_p}\}_{j_0,\dots, j_p\in J}$ is $\delta$-closed in $\vC^\bu(\mc V,\Ohol^\bu)$, since
\begin{align*}
(\delta(\gamma_{\mc V}(h)))_{j_0,\dots,j_p}
=&
\tr\Big(g_{j_{p},j_{1}} ^{-1}  
  \partial(g_{j_{p},j_{p-1}})   \dots  \partial(g_{j_2,j_1})\Big)
 \\
& +\sum_{\ell=1}^{p-1}(-1)^\ell\cdot  \tr\Big(g_{j_{p},j_{0}} ^{-1}  
  \partial(g_{j_{p},j_{p-1}})   \dots \partial(g_{j_{\ell+1},j_{\ell-1}}) \dots \partial(g_{j_1,j_0})\Big)
\\
& +(-1)^p \cdot \tr\Big(g_{j_{p-1},j_{0}} ^{-1}  
  \partial(g_{j_{p-1},j_{p-2}})   \dots  \partial(g_{j_1,j_0})\Big)=0
\end{align*}
vanishes, just as in the proof of theorem \ref{THM:CS-natural-transformation} (using the Leibniz property of $\partial$ and the cyclicity of the trace).

Now, the simplicial set map $\gamma:\CMan^{\Del^{op}}(\CN \mc U^{[\bu]},BG)\to (\vC^\bu(\mc U^{[\bu]},\Ohol^\bu))^{even}_{closed}$ from \eqref{EQU:gamma-iota} in simplicial degree $n$ is defined as $\gamma_n:=\gamma_{\mc U^{[n]}}:\CMan^{\Del^{op}}(\CN \mc U^{[n]},BG)\to (\vC^\bu(\mc U^{[n]},\Ohol^\bu))^{even}_{closed}$. Note that $\gamma$ respects morphisms in $\Del$, since the simplicial structure in the domain and range of $\gamma$ comes from the cosimplicial cover $\mc U^{[\bu]}:\Del\to \Cov_M$. 
\end{definition}

Next, we define the map $\iota$ from equation \eqref{EQU:gamma-iota}.

\begin{definition}\label{DEF:iota}
$\iota:(\vC^\bu(\mc U^{[\bu]},\Ohol^\bu))^{even}_{closed}\to \DKSet(\vC^\bu(\mc U, \Ohol^\bu)\ul) $ is a map, which assigns to an $n$-simplex $c\in (\vC^\bu(\mc U^{[n]},\Ohol^\bu))^{even}_{closed}$ an $n$-simplex in $\DKSet(\vC^\bu(\mc U, \Ohol^\bu)\ul)_n={\Chain^-}(N(\Z\Delt{n}),\vC^\bu(\mc U, \Ohol^\bu)\ul)$, i.e., a chain map from the chains on the standard $n$-simplex to $\vC^\bu(\mc U, \Ohol^\bu)\ul$. If $e_{i_0,\dots, i_\ell}$ with $0\leq i_0<\dots<i_\ell\leq n$ is a generator of $N(\Z\Delt{n})_{-\ell}$ as in example \ref{REM:NZDeltak}, then denote by $\lambda:[\ell]\to [n]$ the map $\lambda(j):=i_j$, which induces a map $\widetilde\lambda:(\vC^\bu(\mc U^{[n]},\Ohol^\bu))^{even}_{closed}\to (\vC^\bu(\mc U^{[\ell]},\Ohol^\bu))^{even}_{closed}$. When $c$ concentrated in homogenious total degree $|c|$, then we define $\iota(c)$  by
\begin{equation}\label{EQU:iota(c)}
\iota(c):N(\Z\Delt{n})\to \vC^\bu(\mc U, \Ohol^\bu)\ul, \quad e_{i_0,\dots, i_\ell}\mapsto (-1)^{\frac{\ell(\ell-1)}{2}}\cdot u^{|c|/2}\cdot \int_{\Delta^\ell} \widetilde{\lambda}(c)
\end{equation}
Note that since the degree of $c$ is $|c|$, the degree $|\widetilde{\lambda}(c)|=|c|$, so that $|\int_{\Delta^{\ell}}\widetilde{\lambda}(c)|=|c|-\ell$, and $\big|u^{|c|/2}\cdot \int_{\Delta^\ell} \widetilde{\lambda}(c)\big|=-\ell$.
\end{definition}
\begin{proposition}
$\iota:(\vC^\bu(\mc U^{[\bu]},\Ohol^\bu))^{even}_{closed}\to \DKSet(\vC^\bu(\mc U, \Ohol^\bu)\ul) $ from definition \ref{DEF:iota} is a well-defined map of simplicial sets.
\end{proposition}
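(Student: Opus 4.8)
The plan is to verify two independent things: first, that for each (homogeneous) $c$ the recipe \eqref{EQU:iota(c)} really defines a morphism in $\Chain^-$, i.e. a genuine $n$-simplex of the target; and second, that the resulting assignment commutes with all face and degeneracy operators, i.e. is simplicial. Since an element of $(\vC^\bu(\mc U^{[n]},\Ohol^\bu))^{even}_{closed}$ is a finite sum of homogeneous closed cochains of even total degrees $0,2,4,\dots$, and all the differentials and structure maps are additive and preserve total degree, I would first reduce to $c$ concentrated in a single even total degree $|c|$ and extend additively. I would also record the degree bookkeeping already noted after \eqref{EQU:iota(c)}: $u^{|c|/2}\int_{\Delta^\ell}\widetilde\lambda(c)$ has total degree $-\ell$, so $\iota(c)$ sends the degree $-\ell$ generator $e_{i_0,\dots,i_\ell}$ into the degree $-\ell$ part of $\vC^\bu(\mc U,\Ohol^\bu)\ul$, as a graded map must.

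For the chain-map condition I would evaluate $\iota(c)\circ d = d\circ\iota(c)$ on a generator $e_{i_0,\dots,i_\ell}$. The left-hand side is $\sum_{j=0}^\ell(-1)^j\iota(c)(e_{i_0,\dots,\widehat{i_j},\dots,i_\ell})$, and since the face $e_{i_0,\dots,\widehat{i_j},\dots,i_\ell}$ corresponds to $\lambda\circ\delta_j$, functoriality of the pullback gives $\widetilde{\lambda\delta_j}(c)=\widetilde{\delta_j}\,\widetilde\lambda(c)$. Because $c$ is $\delta$-closed and $\widetilde\lambda$ is a cochain map, $\widetilde\lambda(c)$ is $\delta$-closed, so \eqref{EQU:integration-delta} of Proposition \ref{PROP:delta-tr=...} applied to $\mu=\widetilde\lambda(c)$ reads $0=(-1)^\ell\,\delta\!\int_{\Delta^\ell}\widetilde\lambda(c)+\sum_{j}(-1)^j\int_{\Delta^{\ell-1}}\widetilde{\delta_j}\,\widetilde\lambda(c)$. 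Substituting this into the left-hand side and comparing with $d\big((-1)^{\ell(\ell-1)/2}u^{|c|/2}\int_{\Delta^\ell}\widetilde\lambda(c)\big)$, the two match once the signs are checked: the prefactors differ by $(-1)^{\ell(\ell-1)/2-(\ell-1)(\ell-2)/2}=(-1)^{\ell-1}=(-1)^{\ell+1}$, exactly the sign produced by \eqref{EQU:integration-delta}. This is precisely why the normalization $(-1)^{\ell(\ell-1)/2}$ was inserted in \eqref{EQU:iota(c)}, and it uses the same sign conventions fixed in Lemma \ref{lemma:TotandCech}.

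For simpliciality I would check $\iota(\widetilde\rho(c))=\iota(c)\circ N(\Z\Delt{\rho})$ for every $\rho:[m]\to[n]$ in $\Del$, evaluating on a generator $e_{i_0,\dots,i_\ell}$ of $N(\Z\Delt{m})$ with associated injection $\lambda:[\ell]\to[m]$. On the left, functoriality gives $\widetilde\lambda\,\widetilde\rho(c)=\widetilde{\rho\lambda}(c)$ and $|\widetilde\rho(c)|=|c|$, so the value is $(-1)^{\ell(\ell-1)/2}u^{|c|/2}\int_{\Delta^\ell}\widetilde{\rho\lambda}(c)$. On the right, $N(\Z\Delt{\rho})$ sends $e_{i_0,\dots,i_\ell}$ to $e_{\rho(i_0),\dots,\rho(i_\ell)}$ when $\rho(i_0)<\dots<\rho(i_\ell)$ and to $0$ otherwise. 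In the first (injective) case $\rho\lambda$ is exactly the face picking out $\rho(i_0),\dots,\rho(i_\ell)$, and the two sides agree termwise; the entire content of the statement is therefore concentrated in the second case, where I must show $\int_{\Delta^\ell}\widetilde{\rho\lambda}(c)=0$ whenever $\rho\lambda$ is non-injective (equivalently, factors through a codegeneracy).

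This degenerate case is the main obstacle, and I would resolve it through the index combinatorics of \eqref{EQU:Def-integration}. A non-injective monotone map $\tau=\rho\lambda$ collapses some adjacent pair of levels, say $\tau(a)=\tau(a+1)$. In \emph{every} summand of $\int_{\Delta^\ell}\widetilde\tau(c)$, indexed by a step $0\le\stp_1\le\dots\le\stp_\ell\le q$, the last index of level $a$ and the first index of level $a+1$ are $i^{(a)}_{\stp_{a+1}}$ and $i^{(a+1)}_{\stp_{a+1}}$; after applying the cover map $f_\tau$, which sends $i^{(p)}\mapsto i^{(\tau(p))}$ and hence identifies levels $a$ and $a+1$, these become two equal consecutive entries. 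Thus each summand evaluates $c$ on an index tuple with a repeated consecutive index, and such cochains vanish: for the trace cocycles $\gamma(h)$ of Definition \ref{DEF:gamma} this holds because the corresponding factor is $\partial(g_{j,j})=\partial(1)=0$, and this is exactly the normalization property of the Čech cochains in play. Therefore $\int_{\Delta^\ell}\widetilde\tau(c)=0$, the left-hand side also vanishes in the degenerate case, and $\iota$ is simplicial. Assembling the degree count, the chain-map computation, and the two cases of the naturality check then completes the proof.
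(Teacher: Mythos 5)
Your degree bookkeeping, your verification of the chain--map condition via Proposition \ref{PROP:delta-tr=...} together with $\delta$-closedness of $\widetilde{\lambda}(c)$, and your treatment of the case where $\rho\circ\lambda$ is injective all coincide with the paper's own argument, including the sign analysis that explains the normalization $(-1)^{\ell(\ell-1)/2}$ in \eqref{EQU:iota(c)}. You also correctly isolate the one point where something nontrivial must happen: when $\rho\circ\lambda$ is not injective, $\rho^\sharp(e_{i_0,\dots,i_\ell})$ is degenerate and hence zero in $N(\Z\Delt{m})$, so one must show $\intfib{\ell}\widetilde{(\rho\circ\lambda)}(c)=0$. (The paper's proof silently writes $\iota(c)(e_{\rho(\lambda(0)),\dots,\rho(\lambda(\ell))})$ without separating this case, so you have put your finger on the real crux.)

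However, your resolution of that case does not work on the stated domain. You argue that every summand of $\intfib{\ell}\widetilde{\tau}(c)$ evaluates $c$ on a tuple with a repeated consecutive index (true) and that ``such cochains vanish''; but the \Cech complex of Definition \ref{DEF:Presheaf-Cech} is the full product $\prod_{i_0,\dots,i_n}\mc A_{i_0,\dots,i_n}$ with no alternation or normalization imposed, and $\delta$-closedness does not force vanishing on repeated indices. Concretely, for $\mc U=\{U_0,U_1\}$ the cochain $c\in\vC^2(\mc U,\Ohol^0)$ with $c_{k_0,k_1,k_2}=1$ if $k_1=0$ and $c_{k_0,k_1,k_2}=0$ if $k_1=1$ is $\delta$-closed of even total degree $2$, yet
\begin{equation*}
\big(\intfib{1}\widetilde{\sigma_0}(c)\big)_{0,1}=c_{0,0,1}-c_{0,1,1}=1\neq 0,
\qquad\text{while}\qquad \sigma_0^\sharp(e_{0,1})=0,
\end{equation*}
so no cancellation in the alternating sum over steps can rescue the general case. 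The vanishing you invoke is thus a genuine extra hypothesis: it holds for the cocycles $\gamma(h)$ of Definition \ref{DEF:gamma} because $g_{j,j}=1$ forces $\partial(g_{j,j})=0$, exactly as you note, but not for an arbitrary element of $(\vC^\bu(\mc U^{[\bu]},\Ohol^\bu))^{even}_{closed}$. As written, your argument proves simpliciality of $\iota$ only on the sub-simplicial set of cochains vanishing on tuples with repeated consecutive indices --- which contains the image of $\gamma$ and suffices for the commutative diagram in which $\iota$ is actually used --- but not for the full domain in the statement. To close the gap one must either build this normalization into the definition of the domain or supply a different argument for the degenerate faces; the example above shows the latter cannot proceed by termwise or pairwise cancellation.
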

\begin{proof}
First, we show that $\iota(c)$ as defined in \eqref{EQU:iota(c)} is indeed a chain map:
\begin{align*}
\delta(\iota(c)(e_{i_0,\dots, i_\ell}))
&=(-1)^{\frac{\ell(\ell-1)}{2}}\cdot u^{|c|/2}\cdot \delta\Big(\int_{\Delta^\ell} \widetilde{\lambda}(c)\Big)
\\
&\stackrel{\eqref{EQU:integration-delta}}= (-1)^{\frac{\ell(\ell-1)}{2}}\cdot u^{|c|/2}\cdot (-1)^\ell\cdot \left[ \int_{\Delta^\ell} \delta(\widetilde{\lambda}(c))-\sum_{j=0}^\ell (-1)^j \cdot \int_{\Delta^{\ell-1}} \widetilde{\delta_j}(\widetilde{\lambda}(c))\right]
\\
&= \sum_{j=0}^\ell (-1)^j \cdot (-1)^{\frac{(\ell-1)(\ell-2)}{2}}\cdot u^{|c|/2}\int_{\Delta^{\ell-1}} \widetilde{\delta_j}(\widetilde{\lambda}(c))
\\
&=\sum_{j=0}^\ell (-1)^j\cdot \iota(c)(e_{i_0,\dots,\widehat{i_j},\dots, i_\ell})
\\
&=\iota(c)(d(e_{i_0,\dots, i_\ell})),
\end{align*}
where we used that $\delta(\widetilde \lambda(c))=\widetilde\lambda(\delta(c))=0$. To see that $\iota$ is a map of simplicial sets, let $\rho:[n]\to [m]$, and $\widetilde \rho :(\vC^\bu(\mc U^{[m]},\Ohol^\bu))^{even}_{closed}\to (\vC^\bu(\mc U^{[n]},\Ohol^\bu))^{even}_{closed}$ the induced map. Then, for $c\in (\vC^\bu(\mc U^{[m]},\Ohol^\bu))^{even}_{closed}$ and $e_{i_0,\dots, i_\ell}$ a generator of $N(\Z\Delt{n})_{-\ell}$ and $\lambda:[\ell]\to [n]$ as before, we get
\begin{align*}
\iota(\widetilde\rho(c))(e_{i_0,\dots, i_\ell})&= (-1)^{\frac{\ell(\ell-1)}{2}}\cdot u^{|c|/2}\cdot \int_{\Delta^\ell} \widetilde{\lambda}(\widetilde \rho(c))= (-1)^{\frac{\ell(\ell-1)}{2}}\cdot u^{|c|/2}\cdot \int_{\Delta^\ell} \widetilde{(\rho\circ \lambda)}(c)\\
&=\iota(c)(e_{\rho(\lambda(0)),\dots, \rho(\lambda(\ell))}) = (\iota(c))(\rho^\sharp(e_{i_0,\dots, i_\ell})),
\end{align*}
where $\rho^\sharp:\Norm(\Z\Delt{n})\to \Norm(\Z\Delt{m})$ is the induced map by post-composition with $\rho$ in $\Delt{n}=\Del(.,[n])$. Thus, $\iota(\widetilde\rho(c))=(\text{pre-compostion with }\rho^\sharp)\circ (\iota(c))$ as maps $(\vC^\bu(\mc U^{[m]},\Ohol^\bu))^{even}_{closed}\to \Chain^-(\Norm(\Z\Delt{n}),\vC^\bu(\mc U, \Ohol^\bu)\ul)$, which shows that $\iota$ is a map of simplicial sets.

This completes the proof of the proposition.
\end{proof}

We can now state the main theorem of this section.
\begin{theorem} 
The following is a commutative diagram of simplicial sets,
\[
\xymatrix{ 
\CMan^{\Del^{op}}(\CN \mc U^{[\bu]},BG) \ar^{\quad \beta}[r] \ar_{\gamma}[d] & \Tot(\HVB(\CN\mc U)) \ar^{\Tot(\CS(\CN\mc U))}[dd]  
\\ 
(\vC^\bu(\mc U^{[\bu]},\Ohol^\bu))^{even}_{closed} \ar_{\iota}[d] &
\\
\DKSet(\vC^\bu(\mc U, \Ohol^\bu)\ul)  & \Tot(\OM(\CN\mc U)) \ar^{}[l] }
\]
\end{theorem}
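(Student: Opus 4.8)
The plan is to check commutativity directly on simplices, reducing everything to the two explicit formulas already established. All four maps in the square ($\beta$, $\gamma$, $\iota$, and the bottom equivalence from \eqref{EQU:Tot(DK(NU))=DK(Cech)}) are maps of simplicial sets, so both composites are as well; hence it suffices to fix an $n$-simplex $h\in\CMan^{\Del^{op}}(\CN\mc U^{[n]},BG)$ and compare. By the Dold--Kan description, an $n$-simplex of the common target $\DKSet(\vC^\bu(\mc U,\Ohol^\bu)\ul)$ is a chain map out of $\Norm(\Z\Delt{n})$, determined by its values on the generators $e_{j_0,\dots,j_P}$ with $0\le j_0<\dots<j_P\le n$. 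Thus I would fix such an $h$, described by transition functions $g^{(p)}_{i,j}$ and vertical maps $f^p_i$ as in lemma \ref{LEM:n-simplex-inCMAN(NU,BG)}, and verify that the two composites agree on each $e_{j_0,\dots,j_P}$.

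First I would compute the clockwise composite $\eqref{EQU:Tot(DK(NU))=DK(Cech)}\circ\Tot(\CS(\CN\mc U))\circ\beta$. By definition \ref{DEF:beta}, $\beta(h)$ is the $n$-simplex of $\Tot(\HVB(\CN\mc U))$ given by the product bundles $E^{(p)}_i=U_i\times\C^n$ with connection $\partial$, the maps $g^{(p)}_{i,j}$, and the vertical maps $f^p_i$; this is precisely the data of proposition \ref{PROP:Tot(Hol(U))n}. Since for product bundles with connection $\partial$ the induced connection $\nabla$ on every $Hom$-bundle acts entrywise as $\partial$, proposition \ref{PROP:Tot-CS-n} (whose target already incorporates the bottom equivalence) computes the clockwise composite: it sends $e_{j_0,\dots,j_P}$ to the cochain whose $(i_0,\dots,i_L)$-component is $u^{L+P}(-1)^{P(P-1)/2}$ times the signed sum over $P$-steps $0\le\stp_1\le\dots\le\stp_P\le L$ of the interleaved trace displayed in that proposition.

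Next I would compute the counterclockwise composite $\iota\circ\gamma$. By definition \ref{DEF:gamma}, $\gamma(h)$ is the closed even cochain on $\mc U^{[n]}$ given by the trace \eqref{DEF:gamma_V(h)}; it splits into homogeneous pieces of total degree $2m$, the degree-$2m$ piece supported on index sequences of length $m+1$, and $\iota$ is applied to each piece and summed (the target being a simplicial abelian group). For the generator $e_{j_0,\dots,j_P}$, with $\lambda\colon[P]\to[n]$, $\lambda(r)=j_r$, definition \ref{DEF:iota} gives $(-1)^{P(P-1)/2}\,u^{m}\,\intfib{P}\widetilde\lambda(\gamma(h))$ on the degree-$2m$ piece. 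Here $\widetilde\lambda(\gamma(h))=\gamma_{\mc U^{[P]}}$ of the restriction of $h$ to the levels $j_0<\dots<j_P$, so along a $P$-step splitting \eqref{EQU:j's-from-i's} of $(i_0,\dots,i_L)$ its $(j'_0,\dots,j'_{L+P})$-component is a trace in which each within-level transition function is a horizontal $g^{(j_m)}_{i_\tau,i_{\tau-1}}$ and each level-changing transition function is a vertical $f^{(j_m,j_{m-1})}_{i_{\stp_m}}=h_{i^{(j_m)},i^{(j_{m-1})}}$ (using lemma \ref{LEM:n-simplex-inCMAN(NU,BG)}), while the closing factor is $h_{i^{(j_P)}_{i_L},\,i^{(j_0)}_{i_0}}^{-1}$, which by the cocycle condition is the inverse of the full interleaved product. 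The fiber-integration formula \eqref{EQU:Def-integration} then sums these over all $P$-steps with sign $(-1)^{\stp_1+\dots+\stp_P}$; since $\intfib{P}$ raises \v{C}ech length by exactly $P$, only the piece with $m=L+P$ contributes to the $(i_0,\dots,i_L)$-component, so $u^m=u^{L+P}$, and summing over $L$ reassembles a single cochain.

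The one step deserving genuine care, rather than routine checking, is the term-by-term identification in the last paragraph: that the combinatorics of the $P$-step index set $J_P(i_0,\dots,i_L)$ governing the fiber integration coincides with the combinatorics of the step positions $\stp_1\le\dots\le\stp_P$ indexing the interleaved products of proposition \ref{PROP:Tot-CS-n} (cf. figure \ref{fig:IkIndices}), that the horizontal/vertical dichotomy of $\mc U^{[P]}$-transition functions matches the $g$/$f$ dichotomy, that the closing inverse matches the leading inverse, and that the two sign contributions $(-1)^{\stp_1+\dots+\stp_P}$ from \eqref{EQU:Def-integration} and $(-1)^{P(P-1)/2}$ from $\iota$ together reproduce exactly the signs of proposition \ref{PROP:Tot-CS-n}. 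Once this dictionary is in place, the clockwise and counterclockwise values on each generator are literally the same expression, and commutativity follows; naturality in $n$ is already built into $\beta$, $\gamma$, and $\iota$, so no further compatibility need be checked.
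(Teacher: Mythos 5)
Your proposal is correct and follows essentially the same route as the paper's proof: fix an $n$-simplex $h$, compute $\iota(\gamma(h))$ on each generator $e_{j_0,\dots,j_P}$ via the fiber-integration formula and the $J_P$-index combinatorics, identify the within-level/level-changing factors with the $g$'s and $f$'s, and match the result against proposition \ref{PROP:Tot-CS-n} for the clockwise composite, using that $\beta$ equips every bundle with the connection $\partial$. The sign bookkeeping and the reduction to generators that you flag as the delicate points are exactly the points the paper's argument also rests on.
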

\begin{proof}
We calculate $\iota(\gamma(h))$ for an $n$-simplex $h\in \CMan^{\Del^{op}}(\CN \mc U^{[n]},BG)$. By definition \ref{DEF:gamma}, for indices $i_0^{(j_0)},\dots,i_r^{(j_r)}\in I^{[n]}$,
\begin{equation*}
(\gamma_n(h))_{i_0^{(j_0)},\dots,i_r^{(j_r)}}=\tr\Big(h_{i_r^{(j_r)},i_0^{(j_{0})}}^{-1}\cdot \partial (h_{i_r^{(j_r)},i_{r-1}^{(j_{r-1})}})\cdot \ldots \cdot \partial (h_{i_2^{(j_2)},i_1^{(j_{1})}})\cdot \partial (h_{i_1^{(j_1)},i_0^{(j_{0})}}) \Big),
\end{equation*}
which is of total degree $2r$ (i.e., \v{C}ech degree $r$ and form degree $r$). By definition \ref{DEF:iota} this becomes the map $\iota(\gamma_n(h)):N(\Z\Delt{n})\to \vC^\bu(\mc U, \Ohol^\bu)\ul$, $e_{j_0,\dots, j_k}\mapsto (-1)^{\frac{k(k-1)}{2}}\cdot u^{\text{degree}/2}\cdot \int_{\Delta^k} \widetilde\lambda (\gamma_n(h))$, where $\lambda:[k]\to [n], \lambda(p)=j_p$ is as in definition \ref{DEF:iota}. 
Since $\widetilde\lambda:(\vC^\bu(\mc U^{[n]},\Ohol^\bu))^{even}_{closed}\to (\vC^\bu(\mc U^{[k]},\Ohol^\bu))^{even}_{closed}$ forgets all but the levels $j_0,\dots, j_k$, this in turn becomes, in component $i_0,\dots, i_q\in I$,
\begin{multline*}
\big(\iota(\gamma_n(h))(e_{j_0,\dots, j_k})\big)_{i_0,\dots, i_q}\\
= (-1)^{\frac{k(k-1)}{2}}\cdot u^{2(q+k)/2}\cdot \sum_{\big(a_0^{(b_0)},\dots, a_{q+k}^{(b_{q+k})}\big)\in J_k(i_0,\dots, i_q)} (-1)^{\stp_1+\dots+\stp_k} \cdot(\gamma_n(h))_{a_0^{(\lambda(b_0))},\dots, a_{q+k}^{(\lambda (b_{q+k}))}}\\
= (-1)^{\frac{k(k-1)}{2}}\cdot u^{q+k} \cdot \sum_{\big(a_0^{(b_0)},\dots, a_{q+k}^{(b_{q+k})}\big)\in J_k(i_0,\dots, i_q)} (-1)^{\stp_1+\dots+\stp_k} \cdot \tr\Big(h_{a_{q+k}^{(\lambda(b_{q+k}))},a_0^{(\lambda(b_{0}))}}^{-1}\hspace{1.4cm}\\
\cdot \partial (h_{a_{q+k}^{(\lambda(b_{q+k}))},a_{q+k-1}^{(\lambda(b_{q+k-1}))}})\cdot \ldots \cdot \partial (h_{a_1^{(\lambda(b_1))},a_0^{(\lambda(b_{0}))}}) \Big)
\end{multline*}
Note that by the definition of $J_k(i_0,\dots, i_q)$, that any adjacent indices $a_q^{(b_q)}, a_{q+1}^{(b_{q+1})}$ appearing in the above sum are either of the form $b_q=b_{q+1}$ or $a_q=a_{q+1}$. Thus, the only $h_{a_{q+1}^{(\lambda(b_{q+1}))}, a_q^{ (\lambda(b_q))}}$ that appear above are (in the notation of proposition \ref{PROP:Tot(Hol(U))n}) either $g^{(b_q)}_{a_q,a_{q+1}}$ or $f^{(b_{q+1},b_{q})}_{a_q}=f^{b_{q+1}}_{a_q}\circ\ldots \circ f^{b_{q}+1}_{a_q}:E^{(b_{q})}_{a_q}\to E^{(b_{q+1})}_{a_q}$.

Next, the outcome of going around the diagram from the theorem is the other way is described in proposition \ref{PROP:Tot-CS-n}, which we see to coincide with the above, since the $\beta$ map assigns the connections $\nabla=\partial$ to all bundles.
\end{proof}

\begin{example}
Consider the case of a $2$-simplex $h\in \CMan^{\Del^{op}}(\CN \mc U^{[2]},BG)$, where we assume again that $G=GL(n,\C)$. Then $\iota(\gamma(h))$ is a mapping $N(\Z\Delt{2})\to \vC^\bu(\mc U, \Ohol^\bu)\ul, e_{j_0,\dots, j_k}\mapsto c^{(j_0,\dots, j_k)}$
\[
\scalebox{1}{
\begin{tikzpicture}[scale=1]
\node (E0) at (0, 0) {}; \fill (E0) circle (2pt) node[left] {$c^{(0)}$};
\node (E1) at (2, 1.5) {}; \fill (E1) circle (2pt) node[above] {$c^{(1)}$};
\node (E2) at (4, 0) {}; \fill (E2) circle (2pt) node[right] {$c^{(2)}$};
\draw [->] (E0) -- node[left] {$c^{(0,1)}$} (E1);
\draw [->] (E1) -- node[above] {\quad\quad$c^{(1,2)}$} (E2);
\draw [->] (E0) -- node[below] {$c^{(0,2)}$} (E2);
\node (E3) at (2,0.6) {$c^{(0,1,2)}$}; 
\end{tikzpicture}
}
\]
where, for $j\in \{0,1,2\}$ and $(j',j'')\in \{(0,1),(0,2),(1,2)\}$,
\begin{align*}
(c^{(j)})_{i_0,\dots, i_q}=& u^q\cdot \tr\big((g^{(j)}_{i_q,i_0})^{-1}\partial(g^{(j)}_{i_q,i_{q-1}})\dots \partial(g^{(j)}_{i_1,i_{0}}) \big), \\
(c^{(j',j'')})_{i_0,\dots, i_q}=& u^{q+1}\cdot\sum_{0\leq \stp\leq q} (-1)^{\stp}\cdot\tr\Big[\big(g^{(j'')}_{i_q,i_{q-1}}\dots g^{(j'')}_{i_{\stp+1},i_{\stp}}f^{(j'',j')}_{i_\stp}g^{(j')}_{i_\stp,i_{\stp-1}}\dots g^{(j')}_{i_1,i_{0}}\big)^{-1}\\
&\hspace{0.8cm}\partial(g^{(j'')}_{i_q,i_{q-1}})\dots \partial(g^{(j'')}_{i_{\stp+1},i_{\stp}})\partial(f^{(j'',j')}_{i_\stp})\partial(g^{(j')}_{i_\stp,i_{\stp-1}})\dots \partial(g^{(j')}_{i_1,i_{0}}) \Big], \\
(c^{(0,1,2)})_{i_0,\dots, i_q}=& -u^{q+2}\cdot \sum_{0\leq \stp_1\leq \stp_2\leq q}(-1)^{\stp_1+\stp_2}\cdot\tr\Big[
\big(g^{(2)}_{i_q,i_{q-1}}\dots f^{(2,1)}_{i_{\stp_2}}\dots f^{(1,0)}_{i_{\stp_1}} \dots g^{(0)}_{i_1,i_{0}}\big)^{-1}\\
&\hspace{1.5cm}\partial(g^{(2)}_{i_q,i_{q-1}})\dots \partial(f^{(2,1)}_{i_{\stp_2}})\dots \partial(f^{(1,0)}_{i_{\stp_1}}) \dots \partial(g^{(0)}_{i_1,i_{0}})\Big].
\end{align*}
In the lowest case, this is interpreted as $(c^{(j)})_{i_0}=u^0\cdot \tr(id_{\C^n})=\dim(\C^n)=n$.%, $(c^{(j_0,\dots,j_k)})_{i_0}=u^k \cdot \tr[(f^{(j_k,j_0)}_{i_0})^{-1}\partial(f^{(j_k,j_{k-1})}_{i_0})\dots \partial(f^{(j_1,j_0)}_{i_0})]$.
\end{example}

In the remainder of this section, we want to give an alternative description of $\gamma$ from \eqref{EQU:gamma-iota} via the universal Chern form on $BG$.

\begin{definition}
Assume that $G=GL(n,\C)$. Applying holomorphic forms to the simplicial manifold $BG$ from definition \ref{DEF:BG}, we obtain a cosimplicial non-negatively graded cochain complex $\Ohol^\bu(BG):\Del\to \Chain^+$ with $\Ohol^\bu(BG)_k= \Ohol^\bu(G^{\times k})$. There is a closed and even element $\Chern$ in the totalization, $\Chern\in \tot(\Ohol^\bu(BG))=\prod_\ell \Ohol^\ell(G^{\times \ell})[\ell]$, given by the following sequence of forms, 
\begin{equation}\label{DEF:Ch-form}
\Chern:=(n, \tr( g\partial (g^{-1})), \tr( g_1g_2 \partial (g_2^{-1})\partial (g_1^{-1})),\dots).
\end{equation}
\end{definition}

If $h\in \CMan^{\Del^{op}}(\CN \mc V,BG)$, there in an induced map $\Ohol^\bu(h):\Ohol^\bu(BG)\to \Ohol^\bu(\CN \mc V)$, and, thus a map on the total complex $\tot(\Ohol^\bu(h)):\tot(\Ohol^\bu(BG))\to \tot(\Ohol^\bu(\CN \mc V))\stackrel{\eqref{EQU:Tot(NU)=Cech}}\cong \vC^\bu(\mc V,\Ohol^\bu)$. Then, we claim the following.

\begin{proposition}
The map $\gamma:\CMan^{\Del^{op}}(\CN \mc U^{[\bu]},BG)\to (\vC^\bu(\mc U^{[\bu]},\Ohol^\bu))^{even}_{closed}$ from definition \ref{DEF:gamma} can be expressed via the Chern character $\Chern$ by
\[
\gamma_n(h)=\tot(\Ohol^\bu(h))(\Chern)\quad\quad \in  (\vC^\bu(\mc U^{[n]},\Ohol^\bu))^{even}_{closed}
\]
 \end{proposition}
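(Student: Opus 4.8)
The plan is to prove the identity by unwinding both sides over a single open set $V_{j_0,\dots, j_\ell}$ and checking that the two resulting holomorphic $\ell$-forms agree; everything else is formal, since both $\tot(\Ohol^\bu(h))$ and $\gamma$ are assembled componentwise and land in the (diagonal) \v{C}ech complex. Writing $\mc V=\mc U^{[n]}$, I would first recall from lemma \ref{LEM:n-simplex-inCMAN(NU,BG)} that such an $h\in \CMan^{\Del^{op}}(\CN \mc V, BG)$ is determined by transition functions $g_{i,j}:V_{i,j}\to G$ satisfying $g_{i,j}g_{j,k}=g_{i,k}$ and $g_{i,i}=1$, and that on the $\ell$-simplex $V_{j_0,\dots, j_\ell}$ the simplicial map $h_\ell:(\CN\mc V)_\ell\to BG_\ell=G^{\times \ell}$ is exactly $(g_{j_0,j_1},g_{j_1,j_2},\dots, g_{j_{\ell-1},j_\ell})$ (compatibility with the face maps $d_m$ of $BG$, which multiply adjacent entries, is precisely the cocycle relation).

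Next I would make $\tot(\Ohol^\bu(h))$ explicit. The map $\Ohol^\bu(h)$ is, in cosimplicial degree $\ell$, the pullback $h_\ell^*:\Ohol^\bu(G^{\times \ell})\to \Ohol^\bu((\CN\mc V)_\ell)$, and since $\Chern$ is concentrated on the diagonal—its $\ell$-component is $\Chern_\ell=\tr(g_1\cdots g_\ell\,\partial(g_\ell^{-1})\cdots \partial(g_1^{-1}))\in \Ohol^\ell(G^{\times \ell})$—the image $\tot(\Ohol^\bu(h))(\Chern)$ is again diagonal and lands in the \v{C}ech degrees after the isomorphism \eqref{EQU:Tot(NU)=Cech}. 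Pulling $\Chern_\ell$ back along $h_\ell$ amounts to the substitution $g_m\mapsto g_{j_{m-1},j_m}$, producing on $V_{j_0,\dots, j_\ell}$ the form $\tr(g_{j_0,j_1}\cdots g_{j_{\ell-1},j_\ell}\,\partial(g_{j_{\ell-1},j_\ell}^{-1})\cdots \partial(g_{j_0,j_1}^{-1}))$.

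I would then simplify this using the two consequences of the cocycle relations: the telescoping product $g_{j_0,j_1}\cdots g_{j_{\ell-1},j_\ell}=g_{j_0,j_\ell}=g_{j_\ell,j_0}^{-1}$, together with $g_{j_{m-1},j_m}^{-1}=g_{j_m,j_{m-1}}$, which turns the pulled-back form into $\tr(g_{j_\ell,j_0}^{-1}\,\partial(g_{j_\ell,j_{\ell-1}})\cdots \partial(g_{j_1,j_0}))$. This is exactly $(\gamma_{\mc V}(h))_{j_0,\dots, j_\ell}$ from \eqref{DEF:gamma_V(h)}. As a built-in consistency check, the closedness of $\Chern$ combined with the fact that $\tot(\Ohol^\bu(h))$ and \eqref{EQU:Tot(NU)=Cech} are cochain maps re-derives the $\delta$-closedness of $\gamma_{\mc V}(h)$ verified in definition \ref{DEF:gamma}.

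The step I expect to require the most care is the sign bookkeeping introduced by the isomorphism \eqref{EQU:Tot(NU)=Cech}, that is, by lemma \ref{lemma:TotandCech}, where the passage from $\tot$ to the \v{C}ech complex multiplies a homogeneous element of total degree $|c|$ by $(-1)^{|c|(|c|+1)/2}$. Since $\Chern_\ell$ has \v{C}ech degree $\ell$ and form degree $\ell$, its total degree is $2\ell$, and one must verify that the resulting sign is consistent with the unsigned formula \eqref{DEF:gamma_V(h)}, working throughout under the zero-internal-differential (Hodge) conventions in which both $\gamma_{\mc V}(h)$ and any degreewise twist of it remain $\delta$-closed (because the sign is constant along each \v{C}ech differential). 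Once the conventions of \eqref{EQU:Tot(NU)=Cech} are pinned down, the computation above identifies the two diagonal collections of \v{C}ech cochains termwise, which proves the proposition.
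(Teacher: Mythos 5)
Your proposal follows essentially the same route as the paper's proof: identify $h_\ell$ with the tuple $(g_{j_0,j_1},\dots,g_{j_{\ell-1},j_\ell})$ via lemma \ref{LEM:n-simplex-inCMAN(NU,BG)}, pull back the $\ell$-component of $\Chern$ by the substitution $g_m\mapsto g_{j_{m-1},j_m}$, and collapse the resulting product with the cocycle identity to recover \eqref{DEF:gamma_V(h)}. The one point you flag but leave open --- the sign $(-1)^{|c|(|c|+1)/2}$ coming from the identification of lemma \ref{lemma:TotandCech}, which equals $(-1)^{\ell}$ on the total-degree-$2\ell$ component --- is also passed over silently in the paper's own proof, so your instinct that this is the step requiring care is well placed.
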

 \begin{proof}
Just as in definition \ref{DEF:gamma}, let $h\in \CMan^{\Del^{op}}(\CN \mc V,BG)$ be given by transition functions $g^{\mc V}_{i,j}:V_{i,j}\to G\subseteq \C^{n,n}$ from lemma \ref{LEM:n-simplex-inCMAN(NU,BG)}. As shown in the proof of lemma \ref{LEM:n-simplex-inCMAN(NU,BG)}, these $g^{\mc V}_{i,j}$ induces all higher maps $V_{j_0,\dots, j_\ell}\to G^{\times \ell}$ via $V_{j_0,\dots, j_\ell} \ni x\mapsto (g^{\mc V}_{j_0,j_1}(x),\dots,g^{\mc V}_{j_{\ell-1},j_\ell}(x))$. Thus, under the pullback of $h$, the $\ell$-form $\tr(g_1\dots g_{\ell}\cdot \partial (g_\ell^{-1})\dots\partial (g_1^{-1}))\in \Ohol^\ell(G^{\times \ell})$, which is the $\ell$-component of $\Chern$, gets pulled back to
\[
\tr\Big(g_{j_0,j_{1}}^{\mc V}\dots g_{j_{\ell-1},j_{\ell}}^{\mc V}\cdot \partial (g^{\mc V}_{j_\ell,j_{\ell-1}})\dots \partial (g^{\mc V}_{j_1,j_{0}}) \Big) \quad \in \Ohol^\ell(V_{i_0,\dots, i_\ell}).
\]
Now, since the $g^{\mc V}_{i,j}$ satisfy the cocycle condition $g^{\mc V}_{j_0,j_1}...g^{\mc V}_{j_{\ell-1},j_\ell}=g^{\mc V}_{j_0,j_\ell}$ by lemma \ref{LEM:n-simplex-inCMAN(NU,BG)}, this is precisely the expression obtained for the definition of $\gamma_{\mc V}(h)$ in equation \eqref{DEF:gamma_V(h)}.

Applying this to the covers $\mc V=\mc U^{[n]}$ for all $n$ yields the claim of the proposition.
 \end{proof}

\section{Holomorphic vector bundles with group action}\label{SEC:Group-action}

We give an application of the previous sections by considering a complex manifold with group action. We first generalize definition \ref{DEF:BG}.
\begin{definition}\label{DEF:M/G}
Let $M$ be a complex manifold, and $G$ be a (possibly discrete) complex Lie group together with a right action on $M$. We define a simplicial complex manifold $[M/G]\in \Ob(\CMan^{\Del^{op}})$ by setting the $n$-simplices to be $[M/G]_n=  M \times G^{\times n}$:
\[\begin{tikzcd}
  M \arrow[from=r,  transform canvas={yshift=-0.8ex}]
         \arrow[from=r, transform canvas={yshift=0.8ex}]
          \arrow[r]
    & M \times G
        		 \arrow[r,  transform canvas={yshift=-1.6ex}]
        \arrow[from=r, transform canvas={yshift=-0.8ex}]
        		 \arrow[r]
        \arrow[from=r, transform canvas={yshift=0.8ex}]
            	  \arrow[r,  transform canvas={yshift=1.6ex}]
& M \times G \times G 
 \arrow[r, swap, "s_i",  transform canvas={yshift=-0.8ex}]
         \arrow[from=r, swap, "d_j",  transform canvas={yshift=0.8ex}]
        & \cdots
\end{tikzcd}\]
The face maps $d_j:   M\times G^{\times n} \to    M \times G^{\times (n-1)}$ for $0< j<n$ are $d_j( x, g_1,\dots, g_n)=( x, g_0,\dots, g_j\cdot g_{j+1},\dots, g_n,)$, while $d_n(x, g_1,\dots, g_n)=(x, g_1,\dots, g_{n-1} )$ and $d_0(x, g_1,\dots, g_n)=(x \cdot g_1, g_2 , \dots,  g_n )$. The degeneracies $s_j:M  \times G^{\times (n-1)}  \to M \times G^{\times n}   $ are given by $s_j(x, g_1,\dots, g_{n-1})=(x, g_1,\dots, g_j,1,g_{j+1},\dots, g_{n-1} )$, where $0\leq j\leq n-1$.
\end{definition}

By section \ref{SEC:Prestacks}, $\HVB( [M/G]):\Del\stackrel{[M/G]} \longrightarrow \CMan^{op}\stackrel{\HVB} \longrightarrow \lsSet$ and $\OM( [M/G]):\Del\stackrel{[M/G]} \longrightarrow \CMan^{op}\stackrel{\OM} \longrightarrow \sSet$ are cosimplicial simplicial sets, and $\CS([M/G]):\HVB([M/G])\to \OM([M/G])$ is a map of cosimplicial simplicial sets. By applying the totalization, we obtain an induced map as follows.
%\begin{corollary}\label{COR:Tot(Ch(M/G))}

Note that the above gives rise to a map of simplicial sets
\begin{equation}\label{EQU:Tot(CS(M/G))}
\Tot(\CS([M/G])):\Tot(\HVB([M/G]))\to \Tot(\OM([M/G])).
\end{equation}

In order to interpret the above map, we briefly review the notion of a $G$-equivariant bundle.
\begin{definition}
Given a $G$-manifold, $M$, with action given by $\rho : M \times G \to M$, a bundle $E \xrightarrow{\pi} M$ is a $G$-equivariant bundle over $M$ if there is a $G$-action on $E$, $\varphi: E \times G \to E$, such that the diagram
\[ \begin{tikzcd}
E \times G \arrow[r, "\varphi"] \arrow[d, "\pi \times id"] & E \arrow[d, "\pi"]\\
M \times G \arrow[r, "\rho"] & M
\end{tikzcd} \]
commutes.
\end{definition}

With this, we can now describe $\Tot(\HVB([M/G]))$ more explicitly.
\begin{proposition}\label{PROP:Tot(HVB(M/G))}
The simplices of $\Tot(\HVB([M/G]))$ have the following interpretation.
\begin{enumerate}
\item
\label{PROP:Tot(HVB(M/G))_0}
A $0$-cell in $\Tot(\HVB([M/G]))$ consists precisely of a $G$-equivariant bundle, $E$, with connection, $\nabla$, where $\nabla$ is not required to satisfy any condition with respect to the $G$-action.
\item
\label{PROP:Tot(HVB(M/G))_n}
An $n$-cell in $\Tot(\HVB([M/G]))$ consists precisely of a sequence of $G$-equivariant bundles, $E^{(0)},\dots, E^{(n)}$,  and $G$-equivariant maps, $\al_0,\dots, \al_{n-1}$,
\[ E^{(0)} \xrightarrow{\alpha_0} E^{(1)} \xrightarrow{\alpha_1} \dots \xrightarrow{\alpha_{n-1}} E^{(n)}\] 
where each bundle $E_i \to M$ has a connection $\nabla_i$, which are not required to satisfy any conditions with respect to the $G$-action or the bundle maps.
\end{enumerate}
\end{proposition}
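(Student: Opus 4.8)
The plan is to run the same totalization analysis used in the proofs of Propositions~\ref{PROP:Tot(Hol(U))0} and~\ref{PROP:Tot(Hol(U))n}, replacing the \v{C}ech nerve $\CN\mc U$ by the simplicial manifold $[M/G]$ from Definition~\ref{DEF:M/G}, and then to recognize the resulting descent data as a $G$-equivariant structure. By definition, $\Tot(\HVB([M/G]))$ sits inside $\prod_{[\ell]\in\Ob(\Del)}(\HVB([M/G]_\ell))^{\Delt{\ell}}$, so a $k$-cell is a compatible family of simplicial maps $\Delt{\ell}\times\Delt{k}\to \HVB([M/G]_\ell)=\Nerve(\HVBnabla(M\times G^{\times \ell}))$, one for each $[\ell]$, subject to the equalizer conditions of \eqref{DEF:Tot}. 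First I would extract the data living at $\ell=0$: since $[M/G]_0=M$, a map $\Delt{0}\times\Delt{n}\to \HVB(M)$ is the same as an $n$-simplex of the nerve, that is, a chain $E^{(0)}\xrightarrow{\alpha_0}\dots\xrightarrow{\alpha_{n-1}}E^{(n)}$ of holomorphic bundles with connections $\nabla_p$ over $M$ (for a $0$-cell, a single $(E,\nabla)$).

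Next I would analyze the $\ell=1$ component, where $[M/G]_1=M\times G$ carries the two coface maps $d_1=pr:M\times G\to M$ and $d_0=\rho:M\times G\to M$ of Definition~\ref{DEF:M/G}. Exactly as in Proposition~\ref{PROP:Tot(Hol(U))0}, the equalizer conditions force the two bundles sitting over the vertices of $\Delt{1}$ to be the pullbacks $pr^*E^{(p)}$ and $\rho^*E^{(p)}$, and the edge to be a bundle isomorphism $\psi^{(p)}:pr^*E^{(p)}\xrightarrow{\sim}\rho^*E^{(p)}$. Since a lift $\varphi^{(p)}:E^{(p)}\times G\to E^{(p)}$ of the action $\rho$ is precisely a map $pr^*E^{(p)}\to\rho^*E^{(p)}$ over $M\times G$, the datum $\psi^{(p)}$ is exactly such a lift, and the codegeneracy $s_0:[1]\to[0]$ imposes the normalization $\varphi^{(p)}(\cdot,1)=\mathrm{id}$.

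The crucial step is the $\ell=2$ level, where $[M/G]_2=M\times G\times G$ and the inner face map is $d_1(x,g_1,g_2)=(x,g_1g_2)$. Replaying the cocycle argument of Proposition~\ref{PROP:Tot(Hol(U))0} (and Figure~\ref{fig:E0-bundle}), the equalizer condition on the triple identifies the composite of $\psi^{(p)}$ along the two outer faces with its value along the inner face; because $d_1$ encodes the group multiplication, this is exactly the associativity identity $\varphi^{(p)}(\varphi^{(p)}(e,g_1),g_2)=\varphi^{(p)}(e,g_1g_2)$. Thus each $\psi^{(p)}$ upgrades to an honest $G$-action, so that $E^{(p)}$ is a $G$-equivariant bundle, which establishes the interpretation of a $0$-cell and the equivariance of each bundle in an $n$-cell. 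The hard part will be precisely this identification: translating the abstract descent cocycle over $M\times G\times G$ into the group associativity of $\varphi^{(p)}$ via the specific face maps of $[M/G]$, which is the point where the argument genuinely departs from the purely \v{C}ech-theoretic one (there the ``multiplication'' was only the cocycle relation on transition functions, whereas here it is the actual multiplication in $G$).

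Finally, for an $n$-cell I would compare the ``horizontal'' ($\ell$) and ``vertical'' ($\Delt{n}$) directions as in the proof of Proposition~\ref{PROP:Tot(Hol(U))n}: examining the images of the two maximal non-degenerate $2$-simplices of $\Delt{1}\times\Delt{1}\to\HVB([M/G]_1)$ and invoking the equalizer identity yields the commutation $\alpha_p\circ\varphi^{(p)}=\varphi^{(p+1)}\circ(\alpha_p\times\mathrm{id}_G)$, that is, each $\alpha_p$ is $G$-equivariant, while the connections $\nabla_p$ remain unconstrained. As in Figures~\ref{fig:E0-bundle} and~\ref{fig:E1-bundle}, all higher $\ell$ impose no further relations, since they are generated by the $\ell\le 2$ data, which completes the proof.
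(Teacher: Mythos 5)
Your proposal is correct and follows essentially the same route as the paper's proof: extract the $\ell=0$ chain of bundles with connections, identify the $\ell=1$ edge over $M\times G$ as a lift $pr^*E^{(p)}\to\rho^*E^{(p)}$ of the action, obtain the action axiom from the commutative triangle over $M\times G\times G$ (where $d_1$ encodes group multiplication), and derive equivariance of the $\alpha_p$ from the $\Delt{1}\times\Delt{1}$ compatibility, with no higher relations. Your explicit observation that the codegeneracy $\sigma_0$ forces $\varphi^{(p)}(\cdot,1)=\mathrm{id}$ is a small detail the paper leaves implicit, but otherwise the two arguments coincide.
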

\begin{proof}
First we prove part \eqref{PROP:Tot(HVB(M/G))_0}. 
Similar to the proof of Proposition \ref{PROP:Tot(Hol(U))0}, a 0-simplex, $\omega$, in $\Tot(\HVB([M/G]))$ is given by a sequence of simplicial set maps, $\Delt{\ell}\times \Delt{0} \xrightarrow{\omega_{\ell}} \HVB([M/G])$ for $\ell=0,1,2,\dots$.  Note then, for $\ell = 0$, we have the image under $\omega_{0}$ of a vertex which is given by a vector bundle, $E_0^0 := E$, with connection, $\nabla$, over $M$. Over $M \times G$, $\omega_{1}$ gives a pair of bundles, 
$$\begin{tikzcd}
E^1_0\arrow[rr, "\phi"]  \arrow[dr, lightgray]&  &  E^1_1 \arrow[dl, lightgray]\\
&M \times G & 
\end{tikzcd}$$
satisfying $E^1_1 = d_0^*(E) $ and $E^1_0 = d_1^*(E) = E \times G$ with approprately induced pullback connections.  Over the point $( m, g) \in M \times G$, this map sends the fibers, $$\phi_{(m,g)}: \left( \left( E^1_0\right)_{(m,g)} =  E_m \right)  \mapsto   \left(  \left( E^1_1\right)_{(m, g)}=E_{ m \cdot g}  \right) $$
From this data, we can define a map of manifolds for each $g \in G$, $\varphi_g : E \to E$, defined by $\varphi_g|_{(E\times G)_{(m,g)}}:= \phi_{(m,g)}: E_m \mapsto E_{ m \cdot g}$.  Furthermore, for each $g \in G$, this map commutes with $\pi$, i.e. $\pi \circ \varphi_g  = R_g \circ \pi$, where $R_g=\rho(.,g)$ is the right multiplication of $G$ on $M$.  Note, however, that we still have to check that this a $G$-equivariant map, i.e., that $\varphi_{g'} \circ \varphi_{g} = \varphi_{g \cdot g'}$.  This relation requires higher simplicial data.  Our sequence of simplicial set maps, $\omega$, also provides a $2$-simplex given by $\omega_2$:
$$\begin{tikzcd}
  E^2_1\arrow[from=rr, "\phi_{1,0}", swap]\arrow[ddr, lightgray] & & E^2_0\arrow[ddl, lightgray]\\
  &  E^2_2  \arrow[from=ul, "\phi_{2,1}"]    \arrow[from=ur, swap, "\phi_{2,0}"] \arrow[d, lightgray]\\
&  M \times G \times G& \\
\end{tikzcd}$$
Here, by the definition of totalization, $E^2_0= d_1^*\left( d_1^* E \right)  = d_2^*\left( d_1^* E \right)$, $E^2_1= d_0^*\left( d_1^* E \right)  = d_2^*\left( d_0^* E \right)$, and $E^2_2= d_1^*\left( d_0^* E \right)  = d_0^*\left( d_0^* E \right)$.  Similarly, note that $\phi_{1,0} = d_2^* (\phi)$, $\phi_{2,0} = d_1^* (\phi)$, and $\phi_{2,1} = d_0^* (\phi)$.  Since the above diagram commutes, the proof is concluded after unpacking the equation given by the above commutative triangle,
$$\phi_{2,1} \circ \phi_{1,0} = \phi_{2,0}.$$
Since $E^2_0= d_1^*\left(  E \times G \right)  = \left( d_1 \circ d_1 \right)^*(E)$, the composition of maps governing the pullback is given by $d_1 \circ d_1: (m, g, g') \mapsto m$.  Similarly, $E^2_1= \left( d_0 \circ d_2 \right)^*(E)$ is given by $d_0 \circ d_2: ( m, g, g') \mapsto (m\cdot g)$ and $E^2_2= \left( d_0 \circ d_0 \right)^*(E)$ is given by $d_0 \circ d_0: (m, g, g') \mapsto ( m \cdot  g \cdot g' )$.  Thus, the maps act accordingly on the fibers: $\left(\phi_{1,0} \right)_{(m, g, g')}: E_{m} \mapsto E_{m \cdot g }$, $\left(\phi_{2,1} \right)_{(m, g, g')} : E_{m \cdot g} \mapsto E_{ m \cdot g \cdot g' }$, and $\left(\phi_{2,0} \right)_{(m, g, g')} : E_{ m} \mapsto E_{m \cdot g \cdot g'  }$.  Therefore, the above commutative diagram shows that $\varphi_g' \circ \varphi_{g} = \varphi_{g \cdot g'}$, which concludes the proof that $E$ is a $G$-equivariant bundle.

Now we turn to part \eqref{PROP:Tot(HVB(M/G))_n}. 
Similar to the proof of Proposition \ref{PROP:Tot(Hol(U))n}, an $n$-simplex is given by a sequence of simplicial set maps $\Delt{\ell}\times \Delt{n} \xrightarrow{\omega_{\ell}} \HVB([M/G]_\ell)$ for $\ell=0,1,2,\dots$ satisfying certain conditions.  The $0$-simplices of this $n$-simplex are precisely the data for a $G$-equivariant bundle, $(E^{(i)}, \varphi^{(i)})$, for $i=0,\dots, n$, as described in part \eqref{PROP:Tot(HVB(M/G))_0}.  Over the base manifold $M$, we can write the image of the maximal non-degernate $n$-simplex of $\Delt{0}\times \Delt{n}$, under the map $\omega_0$, in $\HVB([M/G]_0)$ as a sequence of $G$-equivariant bundles,
$$\begin{tikzcd}
  E^{(0)} \arrow[drrr, lightgray] \arrow[rr, "\alpha_0"]& &  E^{(1)}\arrow[rr, "\alpha_1"]  \arrow[dr, lightgray]  & &  \dots \arrow[rr, "\alpha_{n-1}"]  \arrow[dl, lightgray] & &  E^{(n)} \arrow[dlll, lightgray]  \\
  &  & &M & & & 
\end{tikzcd}$$
such that each $\alpha_i: E^{(i)} \to E^{(i+1)}$, as a morphism in $\HVB(M)$ maps fibers $\left(E^{(i)} \right)_m$ to $\left(E^{(i+1)} \right)_m$.  To see that these maps $\alpha_i$ respect the $G$-actions $\varphi^{(i)}$, we note that $\omega_1$ offers us the following commutative diagram, 
$$\begin{tikzcd}
  d_0^*(E^{(0)}) \arrow[d, "\phi^{(0)}"]\arrow[ddrrr, lightgray] \arrow[rr, " d_0^*(\alpha_0)"]& &   d_0^*(E^{(1)})  \arrow[d, "\phi^{(1)}"] \arrow[rr, " d_0^*(\alpha_1)"]  \arrow[ddr, lightgray]  & &  \dots \arrow[rr, " d_0^*(\alpha_{n-1})"]  \arrow[ddl, lightgray] & &   d_0^*(E^{(n)})  \arrow[d, "\phi^{(n)}"] \arrow[ddlll, lightgray]  \\
     d_1^*(E^{(0)}) \arrow[drrr, lightgray] \arrow[rr, "d_1^*(\alpha_0)"]& &  d_1^*(E^{(1)})\arrow[rr, "d_1^*(\alpha_1)"]  \arrow[dr, lightgray]  & &  \dots \arrow[rr, "d_1^*(\alpha_{n-1})"]  \arrow[dl, lightgray] & &  d_1^*(E^{(n)}) \arrow[dlll, lightgray]  \\
  &  & &M  \times G& & & 
\end{tikzcd}$$
which on the fiber over a point $( m, g) \in  M \times G $, induces a commutative diagram of maps of fibers, 
$$\begin{tikzcd}
\left( E^{(i)}\right)_m \arrow[d, "(\varphi^{(i)})_g"]   \arrow[r, "\alpha_i"] & \left( E^{(i+1)}\right)_m \arrow[d, "(\varphi^{(i+1)})_g"] \\
\left( E^{(i)}\right)_{m \cdot g} \arrow[r, "\alpha_i"] & \left( E^{(i+1)}\right)_{ m \cdot g}
\end{tikzcd}$$
and thus each $\alpha_i$ is a $G$-equivariant map of $G$-equivariant bundles.  Since there are no higher relations, this concludes the proof.
\end{proof}

For a $G$-equivariant bundle $E \xrightarrow{\pi} M$, where $\varphi:E\times G\to E$ denotes the lift of the $G$-action $\rho:M\times G\to M$ on $M$ on the base, we note that there is an induced map of bundles, $\phi:E \times G \to \rho^*(E)$ over $M\times G$, which we may interpret as a section of $Hom(E \times G , \rho^*(E))$ over $M\times G$, i.e., as a $0$-form $\phi\in \Ohol^0(M\times G, Hom(E \times G , \rho^*(E)))$. Now assume furthermore, that $E\to M$ is equipped with a holomorphic connection $\nabla$. Pulling back $\nabla$ under the projection $pr_1:M\times G\to M$ gives a connection $\nabla^{E\times G}$ on $E\times G$, while pulling back $\nabla$ under $\rho:M\times G\to M$ gives an induced connection $\nabla^{\rho^*(E)}$ on $\rho^*(E)$, and thus we get an induced connection $\nabla^{Hom(E \times G , \rho^*(E))}$ of $Hom(E \times G , \rho^*(E))$, which we simply denote by $\nabla$ again. Thus, we may apply $\nabla$ to $\phi$, which is given by pre- and post-composing with $\nabla^{E\times G}$ and $\nabla^{\rho^*(E)}$, respectively, 
\begin{equation*}
\nabla( \phi)  :=  \nabla^{\rho^*(E)} \circ \phi - \phi \circ  \nabla^{E\times G}  \quad\in \Ohol^1(M\times G, Hom(E \times G , \rho^*(E))).
\end{equation*}
\begin{definition}\label{DEF:EquivNabla}
A connection $\nabla$ on a $G$-equivariant bundle $(E, M, \pi, \rho, \varphi)$, is $G$-invariant if  $\nabla( \phi) = 0$.
\end{definition}

The next corollary states that we can use the map $\Tot(\CS([M/G]))$ from equation \eqref{EQU:Tot(CS(M/G))} as a measure for the connection $\nabla$ to be $G$-invariant.

\begin{corollary}
Let $(E, M, \pi, \rho, \varphi)$ be a $G$-equivariant bundle with connection $\nabla$, which, by proposition \ref{PROP:Tot(HVB(M/G))}\eqref{PROP:Tot(HVB(M/G))_0}, we may interpret as a $0$-simplex in $\Tot(\HVB([M/G]))_0$. If the connection $\nabla$ is $G$-invariant, then $\Tot(\CS([M/G]))$ applied to this is zero in all positive holomorphic form degrees.
\end{corollary}
\begin{proof}
Since $\nabla(\phi)=0$, it follows that $\tr(\phi^{-1} \nabla(\phi))\cdot u=0$, which is the form-degree $1$ part of $\Tot(\CS([M/G]))_0$. Similarly, the higher form degrees vanish; for example, in the notation of the proof of proposition \ref{PROP:Tot(HVB(M/G))}\eqref{PROP:Tot(HVB(M/G))_0}, the form-degree $2$ part is $\tr(\phi_{2,0}^{-1} \nabla_{2,1}(\phi_{2,1})\nabla_{1,0}(\phi_{1,0}))\cdot u^2=\tr(\phi_{2,0}^{-1} d_0^*(\nabla(\phi))d_2^*(\nabla(\phi)))\cdot u^2=0$.
\end{proof}

$\Tot(\CS([M/G]))$ measures the extent to which a holomorphic connection is $G$-invariant.

\appendix

\section{Small and large simplicial sets}

In this appendix we recall some notation of small and large simplicial set, see e.g. \cite{GJ} and \cite{J}.

\begin{definition}\label{DEF:simplicial}
Let $\Del$ be the category whose objects are $[n]=\{0,\dots,n\}$ for $n=0,1,2,\dots$, and morphisms $\rho:[n]\to [m]$ are non-decreasing maps. We have face maps $\delta_j:[n-1]\to [n]$ skipping $j$ (for $j=0,\dots, n$), and degeneracies $\sigma_j:[n]\to [n-1]$ repeating $j$ (for $j=0,\dots, n-1$). If $\mc C$ is a category, then a simplicial (respectively cosimplicial) object in $\mc C$ is a functor $X=X_\bu:\Del^{op}\to \mc C$ (respectively a functor $X=X^\bu:\Del\to \mc C$), where we denote $X_n:=X([n])$ (respectively $X^n:=X([n])$), as usual.

For example, $\Delt{n}:\Del^{op}\to \Set$ is the simplicial set of the standard $n$-simplex, given by setting $\Delt{n}_k:=\Del([k],[n])$. Moreover, $\Delt{\bu}:\Del\to \sSet$ is a cosimplicial simplicial set.
\end{definition}

We remark on the size of the categories that we study. In this paper we will consider small, large, and extra large categories, as well as small and large simplicial sets. We recall some notation from \cite{J}.
\begin{definition}
Fix three Grothendieck universes ${\bf U_s}$, ${\bf U_l}$, ${\bf U_{el}}$, with ${\bf U_s}\in {\bf U_l}\in {\bf U_{el}}$, whose elements are called small sets, large sets, and extra-large sets, respectively. In this paper, we assume that certain sets, such as the underlying set of a complex manifold or a holomorphic vector bundle, are elements of ${\bf U_s}$.

A category $\mc C$ is called small (respectively large, or extra-large), if both the set of objects $\Ob(\mc C)$ and the set of morphism $\Mor(\mc C)=\coprod_{E,E'\in \Ob(\mc C)}\mc C(E,E')$ are small sets (respectively large sets, or extra-large sets). An example of a small category is the simplicial category $\Del$ from above. Examples of large categories are the category $\Set$ of small sets, the category $\Chain$ of cochain complexes, the category $\CMan$ of complex manifolds, and the category $\Cat$ of small categories. Examples of extra-large categories are the category $\lSet$ of large sets, and the category $\lCat$ of large categories.

A simplicial object $X:\Del^{op}\to \mc C$ in a category $\mc C$ is called small (respectively large), if all $X_n=X([n])$ are small sets (respectively large sets), and similarly for a cosimplicial object $X:\Del\to \mc C$. Denote by $\sSet$ the category of all small simplicial sets, which is a large category. Denote by $\lsSet$ the category of large simplicial sets, which is an extra-large category. The nerve $\Nerve(\mc C)$ of a category is the simplicial set whose set of $0$-simplices $\Nerve(\mc C)_0=\Ob(\mc C)$ are the objects of $\mc C$, and $k$-simplices for $k\geq 1$ are $k$ composable morphisms $E_0\stackrel {f_1}\to E_1\stackrel {f_2}\to \dots \stackrel {f_k}\to E_k$, i.e., $\Nerve(\mc C)_k= \coprod_{E_0,\dots, E_k\in \Ob(\mc C)} \mc C(E_0,E_1)\times \dots\times \mc C(E_{k-1},E_k)$. If $\mc C$ is a small category (respectively large category), then $\Nerve(\mc C)$ is a small simplicial set (respectively large simplicial set). Moreover, the nerve is a functor $\Nerve:\Cat\to \sSet$ or $\Nerve:\lCat\to \lsSet$.

In section \ref{SEC:Totalization}, we will consider cosimplicial simplicial sets, i.e.,  functors of the form $X:\Del\to \sSet$, or, more generally, functors $X:\Del\to \lsSet$, where, for the latter, $X^n:\Del^{op}\to \lSet$, so that $(X^n)_m$ is a large set.

Sometimes, we may not comment on the size and just refer to categories or simplicial sets without any size reference. Note, however, that all structures in this paper are, in particular, large structures.
\end{definition}

\section{Cochain complexes and the Dold-Kan functor}

We will frequently consider cochain complexes in this paper that are concentrated in non-negative or non-positive degrees. The next definitions provide more details.

\begin{definition}
Denote by $\Chain$ the category of $\Z$-graded cochain complexes.  So an object in $\Chain$ is a pair $(C^\bullet, d)$, with $d:C^\bullet \rightarrow C^{\bullet +1}$.  Note that if $(C_\bullet, \partial)$ is a graded \emph{chain} complex, i.e.,  $\partial: C_\bullet \rightarrow C_{\bullet-1}$, then we think of it as an object of $C^\bu\in \Chain$ by setting $C^{n}:=C_{-n}$ in degree $n$.

Let $\Chain^+$ be the category of non-negatively graded cochain complexes and $\Chain^-$ be the category of non-positively graded cochain complexes.
\end{definition}

\begin{example} \label{REM:NZDeltak}
If $A:\Del^{op}\to \Ab$ is a simplicial abelian group, then define the negatively graded chain complex $\Norm(A)\in \Chain^-$ to be the normalized chains of $A$, i.e., in degree $-k\leq 0$ we set it to be $A_k$ modulo degeneracies, $$N(A)_{-k}:=A_k/(\sum_{j=0}^{k-1} \image (s_j:A_{k-1}\to A_{k})).$$ The differential $d:N(A)_{-k}\to N(A)_{-k+1}$ is induced by the alternating sum of the face maps $\sum_{j=0}^{k} (-1)^j d_j :A_{k}\to A_{k-1}$.

In particular, the free abelian group of the standard $n$-simplex $\Z\Delt{n}$ is a simplicial abelian group. The cochain complex $\Norm(\Z\Delt{n})\in \Chain^-$ has the following explicit representation. In degree $-\ell\leq 0$, $\Norm(\Z\Delt{n})_{-\ell}$ is the free abelian group with generators $e_{i_0,\dots, i_\ell}$ for all $0\leq i_0<\dots<i_\ell\leq n$ corresponding to the non-degenerate $\ell$-simplex $i:[\ell]\to[n],i:k\mapsto i_k$ of $\Delt{n}$. The generator $e_{i_0,\dots, i_\ell}$ can be thought of labeling an $\ell$-cell of the topological $n$-simplex $|\Delt{n}|$. The differential $d:\Norm(\Z\Delt{n})_{-\ell}\to \Norm(\Z\Delt{n})_{-\ell+1}$ is given by $d(e_{i_0,\dots,i_\ell})=\sum_{j=0}^\ell (-1)^j e_{i_0,\dots,\widehat{i_j},\dots, i_\ell}$.

Similarly, we can describe generators of $\Norm(\Z(\Delt{n}\times\Delt{m}))$ as follows. Let $j:[p]\to [n], j:k\mapsto j_k$ be a non-degenerate $p$-simplex of $\Delt{n}$, and $i:[q]\to [m], i:k\mapsto i_k$ be a non-degenerate $q$-simplex of $\Delt{m}$. In order to obtain a non-degenerate $r$-simplex of $\Delt{n}\times \Delt{m}$ with $p\leq r$ and $q\leq r$, choose numbers $0\leq \mu_1<\mu_2<\dots <\mu_{r-q}\leq r-1$ and $0\leq \nu_1<\nu_2<\dots <\nu_{r-p}\leq r-1$ with $\{\mu_1,\dots,\mu_{r-q}\}\cap \{\nu_1,\dots,\nu_{r-p}\}=\varnothing$. Then $(j\circ \sigma_{\nu_1}\circ\dots \circ \sigma_{\nu_{r-p}}:[r]\to [n],i\circ \sigma_{\mu_1}\circ\dots \circ \sigma_{\mu_{r-q}}:[r]\to [m])$ is a non-degenerate $r$-simplex of $\Delt{n}\times\Delt{m}$, and we denote the corresponding generator of $\Norm (\Z(\Delt{n}\times\Delt{m}))$ by $(s_{\nu_{r-p}}\dots s_{\nu_{1}}(e_{j_0,\dots,j_p}), s_{\mu_{r-q}}\dots s_{\mu_{1}}(e_{i_0,\dots,i_q}))$. In particular, for $r=p+q$, $\{\mu_1,\dots, \mu_p,\nu_1,\dots,\nu_q\}$ determines a permutation of $\{0,\dots, p+q-1\}$, and those $(\mu,\nu)$ are then called $(p,q)$-shuffles. Denote the sign of this permutation by $\sgn(\mu,\nu)$, which is calculated as 
\begin{equation}\label{EQU:sgn(mu,nu)}
\sgn(\mu,\nu)=(-1)^{\mu_1+(\mu_2-1)+(\mu_3-2)+\dots+(\mu_p-p+1)}.
\end{equation}
\end{example}

The Dold-Kan construction makes the normalization into an equivalence of categories; see for example \cite{D}, \cite{K}, \cite[section 2]{G}, or \cite[chapter III.2]{GJ}.
\begin{theorem}[Dold-Kan]\label{THM:Dold-Kan}
Let $\sAb$ be the category of simplicial abelian groups and $\Chain^-$ the category of non-positively graded cochain complexes. There is an adjoint pair of functors $\Norm\dashv \DK$, which is an equivalence, where $\Norm:\sAb\to \Chain^-$ is the normalization, and $\DK:\Chain^-\to \sAb$. 

The functor $\DK$ can be defined as follows. For a non-positively graded chain complex $C^{\bu\leq 0}\in \Ob(\Chain^-)$ define $DK(C^{\bu\leq 0})\in \sAb$ to be the simplicial abelian group, which in simplicial degree $k$ consists of the cochain maps from normalized cells of the standard simplex $\Delt{k}$ to $C^{\bu\leq 0}$, i.e., we set 
\begin{equation}\label{EQU:DK-def}
DK(C^{\bu\leq 0})_k:={\Chain^-}(N(\Z\Delt{k}),C^{\bu\leq 0}).
\end{equation}
\end{theorem}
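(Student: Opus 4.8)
The plan is to prove the two assertions of the theorem in turn: first that $\Norm$ and $\DK$ form an adjoint pair $\Norm\dashv\DK$, and then that the unit and counit of this adjunction are isomorphisms, so that the adjunction is an equivalence. The entire argument hinges on one structural decomposition theorem for simplicial abelian groups, which I would isolate as a lemma and which I expect to be the only genuinely nontrivial step.

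For the adjunction, I would exploit the co-representable form of the definition \eqref{EQU:DK-def}, namely $\DK(C)_k=\Chain^-(\Norm(\Z\Delt{k}),C)$. The assignment $[k]\mapsto\Norm(\Z\Delt{k})$ is a functor $\Del\to\Chain^-$, and $\DK$ is precisely the associated ($\Ab$-enriched) nerve; its left adjoint is therefore forced to be the realization $A\mapsto\int^{[k]}A_k\otimes\Norm(\Z\Delt{k})$, the coend of the levelwise tensor of the abelian group $A_k$ with the complex $\Norm(\Z\Delt{k})$. To identify this left adjoint with $\Norm$ itself I would use two facts: (i) the density theorem in the presheaf category $\sAb=[\Del^{op},\Ab]$, which gives every $A$ canonically as $A\cong\int^{[k]}A_k\otimes\Z\Delt{k}$; and (ii) that $\Norm$ is cocontinuous, which holds directly because $\Norm(A)_{-k}=\mathrm{coker}\big(\bigoplus_{j=0}^{k-1}A_{k-1}\xrightarrow{(s_j)}A_k\big)$ is a finite colimit of the evaluation functors $A\mapsto A_k$, each of which preserves colimits. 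Applying $\Norm$ to (i) and commuting it past the coend yields $\Norm(A)\cong\int^{[k]}A_k\otimes\Norm(\Z\Delt{k})$, exactly the realization, so $\Norm\dashv\DK$ with natural isomorphism $\Chain^-(\Norm(A),C)\cong\sAb(A,\DK(C))$.

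The decomposition lemma is where the real work lies. I would first reconcile the quotient description of normalization used in the statement, $\Norm(A)_{-k}=A_k/\sum_j\image(s_j)$, with Moore's normalized subcomplex $N'(A)_{-k}:=\bigcap_{j=0}^{k-1}\kernel(d_j)\subseteq A_k$, by showing the composite $N'(A)_{-k}\hookrightarrow A_k\twoheadrightarrow\Norm(A)_{-k}$ is an isomorphism of complexes. The central claim is then the natural direct-sum splitting
\[
A_n\;\cong\;\bigoplus_{\eta:[n]\twoheadrightarrow[k]} s_\eta\big(N'(A)_{-k}\big),
\]
the sum ranging over all surjective monotone maps $\eta$ (i.e.\ iterated degeneracies $s_\eta$), which identifies the degenerate part $D(A)$ as the sum of the terms with $k<n$ and exhibits $A=N'(A)\oplus D(A)$ naturally. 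I would prove this by the standard filtration-and-idempotent argument: construct idempotent endomorphisms of $A_n$ from the operators $s_jd_j$, use the simplicial identities to show the degenerate simplices are generated by the $s_j$ compatibly with the filtration by number of degeneracies, and induct on $n$. This bookkeeping of degeneracies via the simplicial identities is the main obstacle.

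Finally, with the decomposition in hand, I would verify that the unit $\eta_A\colon A\to\DK(\Norm(A))$ and counit $\varepsilon_C\colon\Norm(\DK(C))\to C$ are isomorphisms. The counit is checked degreewise: normalizing $\DK(C)$ collapses every contribution except that of the top cell $e_{0,\dots,k}$ of $\Norm(\Z\Delt{k})$, whose image recovers an arbitrary element of $C^{-k}$, so $\varepsilon_C$ is a levelwise isomorphism. For the unit, the same decomposition identifies $\DK(\Norm(A))_n$ with $\bigoplus_{\eta}\Norm(A)_{-k}$, matching $A_n$ under the splitting. Both verifications are routine once the lemma is established, and I would cite \cite{D,K,G,GJ} for the remaining entirely standard details.
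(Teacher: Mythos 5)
This is the one statement in the paper for which there is no in-house proof to compare against: the paper quotes the Dold--Kan correspondence as a classical result and simply cites \cite{D}, \cite{K}, \cite[section 2]{G}, and \cite[chapter III.2]{GJ}. Measured against those cited sources, your argument is correct and is essentially the standard proof, with one organizational difference worth noting. The classical treatments (Dold, Kan, Goerss--Jardine) establish the equivalence by hand from the structure theorem, whereas you derive the adjunction $\Norm\dashv\DK$ abstractly: since $\DK(C)_k=\Chain^-(\Norm(\Z\Delt{k}),C)$ is the enriched nerve of the cosimplicial complex $[k]\mapsto \Norm(\Z\Delt{k})$, its left adjoint is the realization coend, which you identify with $\Norm$ via enriched density and cocontinuity of the quotient normalization (correctly justified: each $\Norm(A)_{-k}$ is a cokernel of maps between evaluation functors, and $G\otimes(-)$ preserves the relevant cokernels). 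This buys a proof of adjointness with no simplicial-identity bookkeeping, localizing all combinatorial work in the splitting $A_n\cong\bigoplus_{\eta:[n]\twoheadrightarrow[k]}s_\eta(N'(A)_{-k})$, exactly the lemma on which \cite[chapter III.2]{GJ} turns. Your counit computation (a chain map $f:\Norm(\Z\Delt{k})\to C$ killed by all front faces is forced to vanish off the top cell, with $f(e_{0,\dots,k-1})=(-1)^k d_C f(e_{0,\dots,k})$ determined, so evaluation at $e_{0,\dots,k}$ gives $N'(\DK(C))^{-k}\cong C^{-k}$) is the standard one and is sound. Two small points to make explicit in a full write-up: the identification $\DK(\Norm(A))_n\cong\bigoplus_\eta \Norm(A)_{-k}$ is itself the splitting lemma applied to the simplicial abelian group $\DK(\Norm(A))$ together with the counit computation, and checking that the unit respects these decompositions requires a (routine) naturality verification --- alternatively, the triangle identity $\varepsilon_{\Norm(A)}\circ\Norm(\eta_A)=\mathrm{id}$ plus the fact that $\Norm$ reflects isomorphisms (immediate from the natural splitting) shortcuts it; and one must fix compatible sign conventions for the differential on $N'(A)=\bigcap_j\kernel(d_j)$ so that the composite $N'(A)\hookrightarrow A\twoheadrightarrow\Norm(A)$ is a map of complexes. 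Neither point is a gap in substance.
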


The Dold-Kan functor $\DK:\Chain^-\to \sAb$ can be composed with the forgetful map $\mc F:\sAb\to \sSet$, which we denote by $\DKSet=\mc F\circ \DK:\Chain^-\to \sSet$.

We will need to use functors between $\Chain$, $\Chain^-$, and $\Chain^+$.  

\begin{definition}\label{DEF:q-functor}
The truncation functor $t$ is a functor $t:\Chain \rightarrow \Chain^+$ defined by $t(C^\bullet) = C^{\bullet \geq 0}$.

The quotient functor $q$ is a functor $q: \Chain \rightarrow \Chain^-$ defined by $q(C^\bullet) = {C^\bullet}/ C^{\bullet \geq 0}$.
\end{definition}

\begin{definition}\label{DEF:Chain-Cat}
There is an adjoint pair of functors 
\begin{eqnarray}
\trunc: \Chain^- \leftrightarrows \Chain^+: \quot
\end{eqnarray}   

To define $\trunc$, let $\Z[v]$ be the cochain complex of polynomials in a formal variable $v$ of degree $|v|=+2$ with differential $d = 0$.  For an object $C=C^{\bullet\leq 0} \in \Ob(\Chain^-)$, the tensor product $C \otimes \Z[v]$ is a $\Z$-graded cochain complex, and then $\trunc(C)\in \Chain^+$ is defined as the truncation of $C \otimes \Z[v]$ to non-negative degrees, 
\[
\trunc(C)^{\bu\geq 0}:=(C \otimes \Z[v])^{\bu\geq 0}.
\]
Thus, elements of $\trunc(C^{\bullet \leq 0})$ in even degree $2k\geq 0$ are polynomials $c_{0}v^k+c_{-2}v^{k+1}+\dots$, and elements in odd degree $2k+1\geq 0$ are polynomials $c_{-1}v^{k+1}+c_{-3}v^{k+2}+\dots$, where each $c_j\in C^j$.

To define $\quot$, let $\Z[u]$ be the cochain complex of polynomials in a formal variable $u$ of degree $|u|=-2$ with differential $d = 0$.  For an object $C=C^{\bullet\geq 0} \in \Ob(\Chain^+)$, the tensor product $C \otimes \Z[u]$ is a $\Z$-graded cochain complex, and $\quot(C)\in \Chain^-$ is the quotient of $C \otimes \Z[u]$ by all the positively graded components of $C \otimes \Z[u]$,
\[
\quot(C)^{\bu\leq 0}:=(C \otimes \Z[u])^\bu/(C\otimes \Z[u])^{\bu> 0}.
\]
We sometimes abuse notation and simply write $\quot(C)=C[u]^{\bu\leq 0}$. Thus, elements of $\quot(C^{\bullet \geq 0})$ in even degree $-2k\leq 0$ are represented by polynomials $c_{0}u^k+c_{2}u^{k+1}+\dots$, and elements in odd degree $-2k-1\leq 0$ are polynomials $c_{1}u^{k+1}+c_{3}u^{k+2}+\dots$, where each $c_j\in C^j$.
\end{definition}

\section{Simplicial Model Categories}

To take the totalization of a cosimplicial object, the category $\mc M$ is assumed to be a simplicial model category.  This means that $\mc M$ is a model category enriched over simplicial sets.  That is, given any two objects $X, Y$ in $\mc M$, there is a simplicial set, denoted $Map(X,Y)$ with $Map(X,Y)_0 = \mc M(X,Y)$, and a composition map $Map(X,Y) \times Map(Y,Z) \rightarrow Map(X,Z) $, satisfying the usual associativity axioms.  Given $X \in \mc M$ and a simplicial set $K_\bullet$, we also need to define objects $X \otimes K_\bullet$ and $X^{K_\bullet}$ in $\mc M$, satisfying some compatibility relations with the model structure and with the enrichment over simplicial sets.  The reader can find the axioms in \cite[chapter 9.1]{H}.  

\begin{example}[\cite{H}, ex. 9.1.13]\label{example:simplicialsets}
The category $\sSet$ of simplicial sets is a simplicial model category, with the following simplicial model category structure.
\begin{enumerate}
\item{$f: X_\bullet \rightarrow Y_\bullet$ is a weak equivalence if the induced map on the geometric realization, $|f|:|X_\bullet| \rightarrow |Y_\bullet|$, is a quasi-isomorphism, i.e., it induces isomorphisms between the homotopy groups of $|X_\bullet|$ and $|Y_\bullet|$,}
\item{$f:X_\bullet \rightarrow Y_\bullet$ is a fibration if it is a Kan fibration,}
\item{$f: X_\bullet \rightarrow Y_\bullet$ is a cofibration if it has the left lifting property with respect to trivial fibrations,}
\item{for simplicial sets $X_\bullet$ and $Y_\bullet$, let $Map(X_\bullet,Y_\bullet)$ be the simplicial set whose $n$-simplices are given by simplicial set maps $X_\bu \times \Delt{n}_\bu \rightarrow Y_\bu$,}
\item{for a simplicial set $X_\bullet$ and simplicial set $K_\bullet$, let $X_\bullet \otimes K_\bullet$ be the simplicial set $X_\bullet \times K_\bullet$}
\item{for a simplicial set $X_\bullet$ and simplicial set $K_\bullet$, let $X_\bullet^{K_\bullet}$ be the simplicial set $Map(K_\bullet, X_\bullet)$.}
\end{enumerate}
\end{example}

\begin{example} \label{example:simplicialabeliangroups}The category $\sAb$ of simplicial abelian groups is a simplicial model category with the following simplicial model category structure (cf. \cite[Ch. III, Proposition 2.11]{GJ}).
\begin{enumerate}
\item{$f: A_\bullet \rightarrow B_\bullet$ is a weak equivalence if the induced map on geometric realization $|f|: |A_\bullet| \rightarrow |B_\bullet|$ is a quasi-isomorphism,}
\item{$f: A_\bullet \rightarrow B_\bullet$ is a  fibration if it is a Kan fibration,}
\item{$f: A_\bullet \rightarrow B_\bullet$ is a cofibration if it has the left lifting property with respect to trivial fibrations,}
\item{for simplicial abelian groups $A_\bullet$ and $B_\bullet$, let $Map(A_\bullet, B_\bullet)$ be the simplicial set whose $n$-simplices are given by simplicial abelian group maps $A_\bullet \otimes_\Z \Z\Delt{n}_\bullet \rightarrow B_\bullet$,}
\item{for a simplicial abelian group $A_\bullet$ and simplicial set $K_\bullet$, $A_\bullet \otimes K_\bullet$ is $A_\bullet \otimes_\Z \Z K_\bullet$, where $\Z K_\bullet$ is the free simplicial abelian group on $K_\bullet$,}
\item{for a simplicial abelian group $A_\bullet$ and simplicial set $K_\bullet$, $A_\bullet^{K_\bullet}$ is the simplicial set $Map( \Z K_\bullet, A_\bullet)$ defined in (4), with the group structure inherited by the group structure on $A_\bullet$. }
\end{enumerate}
\end{example}

The Dold-Kan correspondence can be used to transfer the simplicial model category structure on $\sAb$ to $\Chain^-$.  To define the simplicial model category structure on $\Chain^-$, let $Hom^\bullet(C^\bullet, D^\bullet)$ be the cochain complex of graded maps between cochain complexes $C^\bullet$ and $D^\bullet$.  An element of degree $n$ is a graded map $f:C^\bullet \rightarrow D^{\bullet +n}$, and $d( f) = d_D\circ  f - (-1)^n f \circ d_C$ is an element in degree $n+1$.  

\begin{example}\label{example:negativechaincomplex}
We use the following simplicial model category structure on $\Chain^-$:
\begin{enumerate}
\item{$f: C^\bullet \rightarrow D^\bullet$ is a weak equivalence if $f$ induces an isomorphism on cohomology,}
\item{$f:C^\bullet \rightarrow D^\bullet$ is a fibration if it is a degree-wise surjection for $n <0$,}
\item{$f:C^\bullet \rightarrow D^\bullet$ is a cofibration if it has the left lifting property for every trivial fibration,}
\item{for $C^\bullet$ and $D^\bullet$ in $\Chain^-$, let the $n$-simplices of the simplicial set $Map(C^\bullet, D^\bullet)$ be $Map(C^\bullet, D^\bullet)_n := \Chain^-(C^\bullet \otimes \Norm (\Z\Delt{n}) , D^\bullet)$,}
\item{for $C^\bullet$ in $\Chain^-$ and $K$ a simplicial set, let $C^\bullet \otimes K$ be the cochain complex $C^\bullet \otimes \Norm(\Z K)$,}
\item\label{ITEM:6-}{for $C^\bullet$ in $\Chain^-$ and $K$ a simplicial set, let $(C^\bullet)^K$ be the cochain complex $q(Hom^\bullet(\Norm(\Z K), C^\bullet))$, where $q$ is the quotient functor from definition \ref{DEF:q-functor}.} 
\end{enumerate}
\end{example}

\section{Totalization of a cosimplicial object in a simplicial model category}\label{APPX:totalization}

\begin{definition}[\cite{H}, Definition 18.6.3]\label{DEF:Totalization}
Let $X:\Del\to \mc C$ be a cosimplicial object in a simplicial model category $\mc C$, i.e., each $X^n\in \Ob(\mc C)$. (For example, this applies to $\mc C=\sSet$, in which case $X$ is a cosimplicial simplicial set.) Then, the totalization $\Tot (X)$ of $X$ is defined as the object in $\mc C$, which is the equalizer of the maps
\begin{equation}\label{DEF:Tot}
\prod_{[\ell]\in \Ob(\Del)} (X^\ell)^{\Delt{\ell}} \stackrel[\psi]{\phi}{\rightrightarrows} \prod_{\rho:[n]\to [m]} (X^m)^{\Delt{n}}
\end{equation}
Here, to a morphism $\rho:[n] \rightarrow [m]$, the map $\phi$ sends a factor of $(X^m)^{\Delt{m}}$ to the factor $((X^m)^{\Delt{n}})_\rho$ using the induced map $(\Delt{\bu})(\rho):\Delt{n} \rightarrow \Delt{m}$.  The map $\psi$ sends a factor of $(X^n)^{\Delt{n}}$ to the factor $((X^m)^{\Delt{n}})_\rho$ using the induced map $X(\rho): X^n \rightarrow X^m$.

Furthermore, for a morphism $F$ of cosimplicial objects $X$ and $Y$ in $\mc C$, i.e., a natural transformation $F:X\to Y$, there is an induced map $\Tot(F):\Tot(X)\to \Tot(Y)$ defined as follows.  The natural transformation $F: X \rightarrow Y$ induces maps $(X^\ell)^{\Delt{\ell}} \rightarrow (Y^\ell)^{\Delt{\ell}}$ and $(X^m)^{\Delt{n}} \rightarrow (Y^m) ^{\Delt{n}}$, which defines a diagram 
 \begin{equation}
 \Tot(X) \rightarrow \prod_{[\ell]\in \Ob(\Del)} (Y^\ell)^{\Delt{\ell}} \stackrel[\psi]{\phi}{\rightrightarrows} \prod_{\rho:[n]\to [m]} (Y^m)^{\Delt{n}}
 \end{equation}  
 Using the universal property of $\Tot(Y)$ then gives us a map $\Tot(F): \Tot(X) \rightarrow \Tot(Y)$.
 \end{definition}

We can also define an algebraic analogue of totalization for cosimplicial non-negatively graded cochain complexes, called the total complex and denoted $\tot$.  Let $K^{\bullet, \bullet}: \Del \rightarrow \Chain^+, [n]\mapsto K^{n,\bu}$, be a cosimplicial non-negatively graded cochain complex.  Then $K^{\bullet,\bullet}$ can be made into a bicomplex where the differential $\delta: K^{\bullet, \bullet} \rightarrow K^{\bullet+1, \bullet}$ is defined by taking the alternating sums of the maps induced by the coface maps $[n] \rightarrow [n+1]$, and the differential $d_K:K^{\bullet, \bullet} \rightarrow K^{\bullet, \bullet+1}$ is given by the differentials $d_n: K^{n,\bullet} \rightarrow K^{n,\bullet+1}$ of the cochain complexes.  We obtain an ordinary cochain complex in $\Chain^+$ in two ways.  One way is by taking the total complex of $K^{\bullet,\bullet}$, by defining 
\begin{equation}\label{EQU:Def:tot}
\tot (K) := \bigoplus_{n\geq 0} K^{n, \bullet}[n],
\end{equation}
where $K^{n, \bullet}[n]$ denotes $K^{n, \bullet}$ shifted up by $n$, with differential $d$, applied to $c\in \tot(K)$ of degree $|c|$, given by
\begin{equation}\label{EQU:d-in-tot}
d(c):=d_K(c)-(-1)^{|c|}\cdot \delta (c).
\end{equation}
The second way is by taking the following equalizer
\begin{equation}\label{EQU:Def:eq-for-CH+}
eq:\prod_{[\ell]} Hom^\bullet(N(\Z\Delt{\ell}) , K^{\ell,\bullet}) \rightrightarrows \prod_{[m]\rightarrow[n]} Hom^\bullet(N(\Z \Delt{m}), K^{n, \bullet}) 
\end{equation}
with differential $d((f_\ell:N(\Z\Delt{\ell}) \to K^{\ell,\bullet})_\ell)=( d_K\circ f_\ell-(-1)^{|f_\ell|}\cdot f_\ell \circ d_{N(\Z \Delt{\ell})})_\ell$. The following lemma shows that the two cochain complexes are equal.

\begin{lemma}\label{lemma:TotComplex}
Let $K^{\bullet,\bullet}: \Del \rightarrow \Chain^+$ be a cosimplicial non-negatively graded cochain complex.  Then the total complex $\tot(K)$ from \eqref{EQU:Def:tot} is isomorphic to the equalizer from \eqref{EQU:Def:eq-for-CH+}.
\end{lemma}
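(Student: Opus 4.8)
\emph{Plan.} The plan is to build the isomorphism explicitly by evaluating an element of the equalizer on the unique top non-degenerate cell of each $N(\Z\Delt{\ell})$. First I would unwind \eqref{EQU:Def:eq-for-CH+}: a degree-$k$ element is a collection $(f_\ell)_\ell$ of graded maps $f_\ell$ of degree $k$, i.e.\ $f_\ell\in Hom^k(N(\Z\Delt{\ell}),K^{\ell,\bu})$, which is dinatural in the sense that $f_n\circ N(\Z\rho_*)=K(\rho)\circ f_m$ for every morphism $\rho:[m]\to[n]$ in $\Del$, where $\rho_*:\Delt{m}\to\Delt{n}$ is the induced map of standard simplices and $K(\rho)$ is the cosimplicial structure map. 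Since both $N(\Z\Delt{\bu})$ and $K$ are covariant functors on $\Del$, this equalizer is exactly the complex of natural transformations $N(\Z\Delt{\bu})\Rightarrow K$; and because the dinaturality relation is multiplicative in $\rho$ (it follows for a composite once it holds for the factors), it suffices to impose it on the generating coface and codegeneracy morphisms of $\Del$.

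The candidate map sends $(f_\ell)\mapsto (c^\ell)_\ell$ with $c^\ell:=f_\ell(e_{0,\dots,\ell})$, the value on the top cell $e_{0,\dots,\ell}\in N(\Z\Delt{\ell})_{-\ell}$. Since $e_{0,\dots,\ell}$ has internal degree $-\ell$, a degree-$k$ family yields $c^\ell\in K^{\ell,k-\ell}$, which is precisely the degree-$k$ part of the shifted complex $K^{\ell,\bu}[\ell]$; so degrees match and the map lands in $\tot(K)^k=\bigoplus_\ell K^{\ell,\bu}[\ell]$. For injectivity I would use coface dinaturality: every generator $e_{i_0,\dots,i_p}$ equals $N(\Z\iota_*)(e_{0,\dots,p})$ for the injection $\iota:[p]\to[\ell]$, $\iota(r)=i_r$, so $f_\ell(e_{i_0,\dots,i_p})=K(\iota)\big(f_p(e_{0,\dots,p})\big)=K(\iota)(c^p)$; hence the whole family is recovered from $(c^\ell)$. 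Conversely, taking this formula as a definition reconstructs a candidate family $(f_\ell)$ from a sequence $(c^\ell)$, and verifying that it is genuinely dinatural is where the remaining conditions enter.

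The last task is to check the differentials correspond. Evaluating the equalizer differential $d(f)=d_K\circ f-(-1)^{|f|}\,f\circ d_{N(\Z\Delt{\ell})}$ on $e_{0,\dots,\ell}$, the first term gives $d_K(c^\ell)$, while $d_{N(\Z\Delt{\ell})}(e_{0,\dots,\ell})=\sum_{j=0}^\ell(-1)^j e_{0,\dots,\widehat{j},\dots,\ell}$ combined with $e_{0,\dots,\widehat{j},\dots,\ell}=N(\Z(\delta_j)_*)(e_{0,\dots,\ell-1})$ turns the second term into $\sum_{j}(-1)^j K(\delta_j)(c^{\ell-1})=\delta(c^{\ell-1})$, the coface alternating sum of \eqref{EQU:d-in-tot}; so the map intertwines the differentials and is a cochain isomorphism. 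The step I expect to be the main obstacle is the codegeneracy bookkeeping: imposing dinaturality against a codegeneracy $\sigma_j$ forces $K(\sigma_j)(c^\ell)=0$, so the $c^\ell$ must lie in the normalized (codegeneracy-annihilated) part, and one has to confirm that this is exactly the condition making the reconstructed $(f_\ell)$ well defined and dinatural, so that the equalizer is identified with the total complex $\bigoplus_\ell K^{\ell,\bu}[\ell]$ — all while carefully tracking the signs introduced by the internal degree $-\ell$ of the top cell passing through the shift $[\ell]$.
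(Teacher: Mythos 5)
Your plan follows the same route as the paper's proof: unwind a degree-$k$ element of the equalizer as a compatible family of graded maps $f_\ell$, observe that the coface (injective) components of the compatibility condition force every value $f_n(e_{j_0,\dots,j_p})$ to equal $K(\iota)(c^p)$ for $c^p:=f_p(e_{0,\dots,p})$ and $\iota:[p]\to[n]$ the corresponding injection, and identify the top-cell value $c^\ell$ with an element of $K^{\ell,k-\ell}=\big(K^{\ell,\bullet}[\ell]\big)^k$ via the Hom--Tensor adjunction. Your differential computation is also correct and recovers \eqref{EQU:d-in-tot} with no stray signs.

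The one point of divergence is the step you flag at the end, and you are right to be suspicious of it: the codegeneracy conditions do not merely confirm well-definedness of the reconstructed family --- they impose a genuine extra constraint. Since $(\sigma_j)_*$ sends the top nondegenerate simplex of $\Delt{\ell}$ to a degenerate simplex of $\Delt{\ell-1}$, hence to $0$ in $N(\Z\Delt{\ell-1})$, the equalizer condition at $\sigma_j:[\ell]\to[\ell-1]$ evaluated on $e_{0,\dots,\ell}$ reads $0=K(\sigma_j)(c^\ell)$. The coface conditions alone are consistent for an arbitrary sequence $(c^\ell)$, so the equalizer is the \emph{conormalized} total complex $\bigoplus_\ell\big(\bigcap_j\kernel\,K(\sigma_j)\big)^{\ell,\bullet}[\ell]$; by the cosimplicial Dold--Kan splitting this is a direct summand of $\tot(K)=\bigoplus_\ell K^{\ell,\bullet}[\ell]$ and chain homotopy equivalent to it, but it is not equal to it in general (for a constant cosimplicial complex the equalizer is concentrated in degree $0$ while $\tot$ is not). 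The paper's own proof checks only the coface conditions and is silent on the codegeneracies, so your attempt has surfaced a real imprecision in the statement: ``isomorphic to $\tot(K)$'' should read ``isomorphic to the conormalized total complex, hence chain homotopy equivalent to $\tot(K)$.'' This weaker form suffices for everything downstream, since the subsequent comparison lemmas are only weak equivalences anyway, but your reconstruction cannot be completed to a literal isomorphism with the unnormalized $\tot(K)$, and no bookkeeping will make that obstacle disappear.
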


\begin{proof}
An element of degree $k$ in $\prod_{\ell} K^{\ell, \bullet}[\ell]$ is a collection of elements $c^{0,k} \in K^{0,k}[0]$, $c^{1,k-1} \in K^{1,k-1}[1], \cdots c^{k,0} \in K^{k,0}$.  

An element of degree $k$ in $Hom^\bullet(N (\Z \Delt{\ell}), K^{\ell, \bullet})$ is a collection of maps
\begin{eqnarray*}
f^{k,0}_\ell : N(\Z \Delt{\ell})_0 &\rightarrow& K^{\ell, k} \\
f^{k, 1}_\ell: N(\Z \Delt{\ell})_{-1} &\rightarrow& K^{\ell, k-1} \\
&\vdots & \\
f^{k,k}_\ell: N(\Z\Delt{\ell})_{-k} & \rightarrow & K^{\ell,0} 
\end{eqnarray*}
An element of degree $k$ in the product $\prod_{[\ell]} Hom^\bullet(N (\Z \Delt{\ell}), K^{\ell, \bullet})$ is then a collection of these maps over all $[\ell]$, $f^{k, \bullet}_\ell: N(\Z \Delt{\ell})_{-\bullet} \rightarrow K^{\ell, k-\bullet }$.  To be in the equalizer, the maps $\{f^{k,\bullet}_\bullet \}$ must fit in commutative diagrams
$$
\xymatrix{N(\Z \Delt{m})_{-i} \ar^{f^{k,i}_m} [r] \ar[d] &  K^{m,  k-i}   \ar[d]\\ 
N(\Z \Delt{n})_{-i} \ar^{f^{k,i}_n} [r] & K^{n, k-i} }
$$
for every map map $[m] \rightarrow [n]$.   The maps $\{f^{k,\bullet}_\bullet \}$ in the equalizer then are determined by $f^{0,k}_0 : N(\Z \Delt{0})_0 \rightarrow K^{0, k}$, $f^{1,k-1}_1: N(\Z\Delt{1})_{-1} \rightarrow K^{1, k-1}, \cdots, f^{k,0}_k:N(\Z \Delt{k})_{-k} \rightarrow K^{k,0}$. 

Using the Hom-Tensor adjunction, we can identify $f^{i,k}_i$ with $c^{i, k-i} \in K^{i, k-i}[i]$. 
\end{proof}

\end{document}